\documentclass[11pt]{amsart}
\usepackage{a4wide} 
\usepackage{graphicx}
\usepackage[colorlinks=true, urlcolor=blue, linkcolor=blue, citecolor=green]{hyperref} 
\usepackage{amsmath,amssymb,amsfonts,enumerate}
\usepackage{amsthm}
\usepackage{esint}
\usepackage{mathrsfs}
\usepackage{mathtools}
\usepackage{mathscinet}
\usepackage[nameinlink]{cleveref}
\usepackage[dvipsnames]{xcolor} 

\theoremstyle{plain}
\newtheorem{theorem}{Theorem}[section]
\newtheorem{lemma}[theorem]{Lemma}
\newtheorem{proposition}[theorem]{Proposition}
\newtheorem{corollary}[theorem]{Corollary}

\newtheorem{thmletter}{Theorem}

\theoremstyle{definition}

\theoremstyle{remark}

\newtheorem{conjecture}[theorem]{Conjecture}

\numberwithin{equation}{section}

\title[]{The $C^{p'}$-regularity conjecture near $p=2$}

\author{Se-Chan Lee}
\address{School of Mathematics, Korea Institute for Advanced Study, Seoul 02455, Republic of Korea}
\email{sechan@kias.re.kr}

\author{Taehun Lee}
\address{Department of Mathematics, Konkuk University, 120 Neungdong-ro, Gwangjin-gu, Seoul 05029, Republic of Korea}
\email{taehun@konkuk.ac.kr}

\subjclass[2020]{Primary 35B65; Secondary 35J70, 35J92}

\keywords{$p$-Laplace equation, $C^{p'}$-regularity conjecture, $p$-harmonic functions, optimal gradient regularity}

\thanks{Se-Chan Lee is supported by the KIAS Individual Grant (No. MG099001) at the Korea Institute for Advanced Study.
Taehun Lee is supported by a National Research Foundation of Korea (NRF) grant funded by the Korean government (MSIT) (Grant No. RS-2023-00211258).}

\begin{document} 

\begin{abstract}
    We prove the $C^{p'}$-regularity conjecture in every dimension when $p>2$ is sufficiently close to $2$. To this end, we establish improved H\"older estimates for the gradients of $p$-harmonic functions. These estimates also determine the first-order asymptotics of the optimal gradient H\"older exponent in every dimension. The proof combines compactness, harmonic rigidity of the limiting profiles, and a sharp uniform gap estimate for the first variation of the gradient excess.
\end{abstract}

\maketitle

\section{Introduction}
In this paper, we are concerned with the optimal regularity of weak solutions $u \in W^{1, p}(B_1)$ of the $p$-Poisson equation
\begin{equation}\label{eq-plaplace}
    -\Delta_pu=f\in L^{\infty}(B_1)  \quad \text{in $B_1 \subset \mathbb{R}^d$},
\end{equation}
where $p \in (2, \infty)$ and $d \geq 2$. In the linear case $p=2$, weak solutions of $-\Delta u=f \in L^{\infty}(B_1)$ are locally of class $C^{1, \alpha}$ for every $\alpha \in (0,1)$, but may fail to belong to $C^{1,1}$; see \cite[Section~2.2]{FRRO22} for instance. When $p>2$, by contrast, the $p$-Laplacian is degenerate at critical points of a solution, and the regularity theory becomes considerably more delicate. Nevertheless, the classical regularity theory guarantees that weak solutions of \eqref{eq-plaplace} are locally of class $C^{1, \beta}$ for some $\beta=\beta(d, p) \in (0,1)$. The local H\"older continuity of the gradient for $p$-harmonic functions (i.e., continuous weak solutions of \eqref{eq-plaplace} with $f \equiv 0$), and more generally for wide classes of degenerate quasilinear elliptic equations, was established in the classical works of Ural'ceva~\cite{Ura68}, Uhlenbeck~\cite{Uhl77}, Evans~\cite{Eva82}, DiBenedetto~\cite{DB83}, Lewis~\cite{Lew83}, and Tolksdorf~\cite{Tol84}. Related boundary and global regularity estimates were developed by Lieberman~\cite{Lie88}. For further developments in the regularity theory of $p$-Laplacian type problems, we refer to Duzaar--Mingione~\cite{DM04}, Kuusi--Mingione~\cite{KM12},  Cianchi--Maz'ya~\cite{CM14}, and the references therein.

It is then natural to ask what the optimal H\"older exponent $\beta$ for the gradient is. To illustrate the issue, let $p' \coloneqq p/(p-1)$ be the H\"older conjugate of $p$. It follows from a direct calculation that
\begin{equation*}
    \Delta_p(|x|^{p'})=d(p')^{p-1}.
\end{equation*}
This simple observation shows that solutions of \eqref{eq-plaplace} cannot in general have better regularity than 
\begin{equation*}
    C^{p'}=C^{1, \frac{1}{p-1}},
\end{equation*}
and gives rise to the following well-known open problem.
\begin{conjecture}[$C^{p'}$-regularity conjecture]
    Solutions of \eqref{eq-plaplace} are locally of class $C^{p'}$.
\end{conjecture}

Ara\'ujo--Teixeira--Urbano proved this conjecture in the plane \cite{ATU17} and established it for several restricted classes of solutions in higher dimensions \cite{ATU18}. Their approach provides a general reduction scheme: the conjectured $C^{1,1/(p-1)}$ regularity follows once $p$-harmonic functions satisfy uniform $C^{1,\alpha}$ estimates for some $\alpha>1/(p-1)$; see Theorem~\ref{thm-ATU18}.

In the plane, Iwaniec--Manfredi~\cite{IM89} used complex-analytic and quasiregular mapping techniques to prove that every $p$-harmonic function is locally of class $C^{1, \alpha^{\ast}(2, p)}$, where
\begin{equation*}
    \alpha^{\ast}(2, p)\coloneqq\frac{1}{6} \left(\frac{p}{p-1}+\sqrt{1+\frac{14}{p-1}+\frac{1}{(p-1)^2}} \right),
\end{equation*}
and this exponent is optimal. However, the proof in \cite{IM89} does not provide the quantitative control of the $C^{1, \alpha^{\ast}}$ norm. Ara\'ujo--Teixeira--Urbano~\cite{ATU17} obtained uniform $C^{1, \alpha_{\mathrm{ATU}}(2, p)}$ estimates for $p$-harmonic functions, where
\begin{equation*}
    \alpha_{\mathrm{ATU}}(2, p) \coloneqq \frac{1}{2p}\left(-3-\frac{1}{p-1}+\sqrt{33+\frac{30}{p-1}+\frac{1}{(p-1)^2}} \right).
\end{equation*}
Since $\alpha^{\ast}(2, p)>\alpha_{\mathrm{ATU}}(2, p)>1/(p-1)$, these estimates allow us to utilize the reduction described above. See also da Silva~\cite{dS26} and Lindgren--Lindqvist~\cite{LL17} for related results.

In higher dimensions $d \geq 3$, no counterpart of the planar complex-analytic theory is available, and only partial results are known. The results in \cite{ATU18}, for example, apply to several classes of solutions such as radially symmetric solutions and those without saddle critical points. In the perturbative regime for \eqref{eq-plaplace} near $p=2$, Pimentel--Rampasso--Santos~\cite{PRS20} obtained local $C^{1,\alpha}$ estimates, for each fixed $\alpha \in (0,1)$, provided that $0<p-2<\varepsilon=\varepsilon(d,\alpha)$. Nevertheless, the conjecture has remained open for general solutions in higher dimensions.\\

Our first main theorem provides a positive answer to the conjecture when $p>2$ is sufficiently close to $2$.

\begin{theorem}[$C^{p'}$-regularity conjecture]\label{cor-conjecture}
    Let $d \geq 2$. Then there exists $\varepsilon_d>0$ such that  if $p \in (2, 2+\varepsilon_d)$ and $u \in W^{1,p}(B_1)$ is a weak solution of \eqref{eq-plaplace}, then $u \in C^{1, 1/(p-1)}_{\mathrm{loc}}(B_1)$ with the uniform estimate
    \begin{equation*}
        \|u\|_{C^{1, \frac{1}{p-1}}(\overline B_{1/2})} \leq C\left(\|u\|_{L^p(B_1)}+\|f\|_{L^{\infty}(B_1)}^{\frac{1}{p-1}} \right),
    \end{equation*}
    where $C>0$ depends only on $d$ and $p$.
\end{theorem}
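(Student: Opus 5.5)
The plan is to reduce \Cref{cor-conjecture} to a uniform gradient H\"older estimate for $p$-harmonic functions via the Ara\'ujo--Teixeira--Urbano reduction scheme (Theorem~\ref{thm-ATU18}). That scheme says that the $C^{1,1/(p-1)}$ estimate for \eqref{eq-plaplace} follows once every $p$-harmonic function $h$ in $B_1$ satisfies
\begin{equation*}
\|h\|_{C^{1,\alpha}(\overline B_{1/2})}\le C\|h\|_{L^p(B_1)}
\end{equation*}
for some $\alpha>1/(p-1)$. The constant $C$ must be uniform. Everything therefore comes down to showing that, for $p$ close to $2$, $p$-harmonic functions enjoy uniform $C^{1,\alpha}$ bounds with $\alpha=\alpha(d,p)>1/(p-1)$.

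The difficulty is that $1/(p-1)\to 1$ as $p\downarrow 2$. We thus need gradient H\"older exponents approaching $1$ at a specific rate. To first order, $1/(p-1)=1-(p-2)+O((p-2)^2)$. So it suffices to prove an improved estimate of the form
\begin{equation*}
\alpha(d,p)\ \ge\ 1-c_d(p-2)+o(p-2),\qquad c_d<1 .
\end{equation*}
Equivalently, the exponent deficit must be strictly smaller than $p-2$ at first order. (For comparison, in the plane the Iwaniec--Manfredi exponent $\alpha^*(2,p)$ expands as $1-\tfrac{2}{3}(p-2)+\dots$.)

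To obtain this, I would run an excess-decay iteration on the gradient excess
\begin{equation*}
E(r)=\Big(\fint_{B_r}|\nabla u-(\nabla u)_r|^2\Big)^{1/2}.
\end{equation*}
The goal is $E(\theta r)\le \theta^{\alpha}E(r)$ at every scale, with $\alpha$ as above. There are two regimes.

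\textbf{Nondegenerate scales.} If $|(\nabla u)_r|\gg E(r)$, the equation is uniformly elliptic with smooth coefficients near this scale. Linearizing at the mean gradient gives decay at any rate below $1$.

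\textbf{Degenerate scales.} Here we argue by compactness. Take a sequence $p_k\downarrow 2$ and normalized $p_k$-harmonic functions violating the decay. The uniform $C^{1,\beta}$ theory gives compactness, and the limits are harmonic (harmonic rigidity). For harmonic functions the gradient excess decays like $\theta^{1}$ once the affine part is removed.

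The real content is the first-order correction in $p-2$. Write $u_p=h+(p-2)v+o(p-2)$ with $h$ harmonic. Then $v$ solves
\begin{equation*}
\Delta v=-\operatorname{div}\big(\log|\nabla h|\,\nabla h\big),
\end{equation*}
which is the derivative of $|\nabla u|^{p-2}\nabla u$ in $p$. We must then compute the first variation in $p$ of the optimal decay exponent of the excess for homogeneous harmonic profiles. The key claim is a uniform gap estimate: for every nontrivial harmonic $h$ (after reducing to homogeneous degree-$2$ pieces by scaling and orthogonality), this first variation is at least $-c_d$ with $c_d<1$. This yields $\alpha\ge 1-c_d(p-2)$. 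The natural route is to test against $r\,\partial_r$ and use the Rellich/Pohozaev identity with the weight $\log|\nabla h|$. The gap $c_d<1$ should come from $|\nabla h|$ vanishing only on a set of codimension at least $2$ for quadratic $h$, which makes the logarithmic weight integrable with strict inequality.

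I expect the main obstacle to be this sharp uniform gap estimate. It must hold uniformly over all degree-$2$ harmonic polynomials and must be strictly below the threshold $1$. It must also survive the compactness argument, so that the $o(p-2)$ errors are uniform in the solution. Once it is in hand, choosing $\varepsilon_d$ small makes $\alpha(d,p)>1/(p-1)$, and Theorem~\ref{thm-ATU18} gives the stated estimate.
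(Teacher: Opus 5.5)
There is a genuine gap. Your outline follows the same architecture as the paper: reduction via Theorem~\ref{thm-ATU18}, compactness to harmonic limits, the correction equation with $\log|Dh|$, reduction to quadratic profiles, a uniform gap, and iteration. However, the step you yourself call the crux is never supplied, and the mechanism you suggest would not produce it. Integrability of the logarithmic weight is automatic ($DP_H\log|DP_H|$ is even bounded). It tells you nothing about where the first variation sits relative to the threshold $-1$. Three ingredients are missing:
(i) The resonance $\Delta(r^2\log r\,Y)=(d+2)Y$ for $Y\in\mathcal H_2$. It gives $\Pi_2 w_H=(\mu(H)P_H+\Psi_H)\log|x|$, hence $\mathcal B_\rho(P_H,w_H)=\mu(H)\rho^2\log\rho\,(\mathcal E_1(P_H))^2$ (Lemma~\ref{lem-identity}). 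This is what turns the correction into an exponent shift $\rho^{1+\varepsilon\mu(H)}$ and makes $-1$ the threshold.
(ii) An integration by parts on $\mathbb S^{d-1}$ with the tangent field $X=H\omega-P_H(\omega)\omega$. It gives $\mu(H)=-\frac{2}{\mathrm{tr}(H^2)}\fint_{\mathbb S^{d-1}}V_\nu(\lambda)$, where $V_\nu(\lambda)$ is the variance of the eigenvalues under the weights $\nu_i=\lambda_i^2\omega_i^2/P_{H^2}(\omega)$.
(iii) The elementary bound $V_\nu(\lambda)\le (a-b)^2/4\le \mathrm{tr}(H^2)/2$, with $a,b$ the extreme eigenvalues. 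Equality would force $H=\mathrm{diag}(a,-a,0,\dots,0)$ and then $\omega_1^2=\omega_2^2$ a.e., which is impossible. Uniformity follows from $\mu(tH)=\mu(H)$ and compactness of $\{\mathrm{tr}H=0,\ \mathrm{tr}(H^2)=1\}$.
Without (i)--(iii) your key claim is only asserted. Also, $\alpha^\ast(2,p)=1-\frac12(p-2)+O((p-2)^2)$, not $1-\frac23(p-2)$.

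Your iteration also has two structural problems.

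First, in your degenerate regime $|(\nabla u)_r|\lesssim E(r)$ the normalized slopes $\xi_j$ may converge to some $\xi\neq0$. The limiting correction then solves $\Delta w=-\operatorname{div}\big((\xi+DP_H)\log|\xi+DP_H|\big)$ in $B_1$ with zero boundary values. This is not the homogeneous problem, and a gap for homogeneous quadratics does not control it. The paper avoids this by proving excess decay only at critical points. There $\xi+Dv(0)=0$ forces $\xi_j=-Dv_j(0)\to -Dh(0)=-(Dh)_{B_1}=0$. General points are then reached through the near-critical/non-degenerate dichotomy on $|Du(0)|$ versus $r^{\beta_{p,c}}$. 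The near-critical case is a fixed-$p$ compactness argument that uses the slack $\beta_{p,\hat c}>\beta_{p,c}$.

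Second, an expansion $u_p=h+(p-2)v+o(p-2)$ around the compactness limit $h$ is unavailable, since $v_j-h$ is merely $o(1)$. You must compare with the harmonic replacement $h_j$ of $v_j$. Then $w_j=(v_j-h_j)/\varepsilon_j$ is bounded in $W^{1,q}(B_1)$ and in $H^1(B_R)$. The exact identity $(\mathcal E_\rho(v_j))^2=(\mathcal E_\rho(h_j))^2+2\varepsilon_j\mathcal B_\rho(h_j,w_j)+\varepsilon_j^2(\mathcal E_\rho(w_j))^2$, together with $\mathcal E_\rho(h_j)\le\rho\,\mathcal E_1(h_j)\le\rho$, removes any need for uniform $o(\varepsilon)$ control.
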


The proof of Theorem~\ref{cor-conjecture} is based on the compactness method developed in \cite{ATU17, ATU18}; related compactness and improvement of flatness arguments were introduced by Imbert--Silvestre~\cite{IS13} and Ara\'ujo--Ricarte--Teixeira~\cite{ART15}. We establish the required quantitative improvement of $p$-harmonic regularity in the following theorem, which is our second main theorem.

\begin{theorem}[Improved regularity of $p$-harmonic functions]\label{thm-main-p-harmonic}
    Let $d \geq 2$ and fix $c \in (0, 1/2)$. Then there exists $\varepsilon_{d, c}>0$ such that  if $p \in (2, 2+\varepsilon_{d, c})$ and  $u \in W^{1,p}(B_1)$ is $p$-harmonic in $B_1$, then 
    \begin{equation*}
        u \in C^{1, \beta_{p,c}}_{\mathrm{loc}}(B_1)\quad \text{for} \quad \beta_{p,c} \coloneqq \frac{1}{p-1}+c(p-2) \in (0,1).
    \end{equation*}
    Moreover, we have the uniform estimate
    \begin{equation*}
        \|u\|_{C^{1, \beta_{p,c}}(\overline B_{1/2})} \leq C\|u\|_{L^{\infty}(B_1)},
    \end{equation*}
    where $C>0$ depends only on $d$, $p$, and $c$.
\end{theorem}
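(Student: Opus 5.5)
We argue by contradiction and compactness in the spirit of \cite{ATU17, ATU18}, and reduce everything to a one-step decay estimate for the gradient excess. By the classical $C^{1,\beta}$ theory, with constants uniform for $p\in(2,3)$, it suffices to prove the following. There exist $\rho\in(0,1/4)$ and $\varepsilon_{d,c}>0$ such that for $p\in(2,2+\varepsilon_{d,c})$ and every $p$-harmonic $u$ in $B_1$ with $\|u\|_{L^\infty(B_1)}\le 1$, we have
\begin{equation*}
\sup_{B_\rho}|\nabla u-\nabla u(0)|\le \rho^{\beta_{p,c}}\sup_{B_1}|\nabla u-\nabla u(0)|,
\end{equation*}
suitably normalized. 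This estimate should then be iterated along dyadic scales, using the scaling invariance of $p$-harmonicity. There is one complication. If $q=\nabla u(0)$ is large compared to the oscillation, the rescaled functions $(u(rx)-u(0))/\big(r\,\mathrm{osc}_{B_r}\nabla u\big)$ solve a uniformly elliptic equation with a drift-like shift. In that regime we instead use the nondegenerate regime: $u$ solves a uniformly elliptic linearized equation, whose coefficients are $C^\alpha$ close to constant, so it is $C^{1,\alpha}$ for all $\alpha<1$. We therefore split into the nondegenerate case ($|q|\gg$ excess) and the degenerate case (excess comparable to $|q|$ or larger). The nondegenerate case is handled by Schauder-type perturbation.

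In the degenerate case we run compactness with $p_k\downarrow 2$. Suppose the decay fails for sequences $p_k\to2$ and $u_k$. By uniform $C^{1,\beta_0}$ bounds, $u_k\to h$ in $C^1$, and $h$ is harmonic. This is the harmonic rigidity of the limiting profiles. Here, though, plain compactness only gives decay with exponent $\beta<1$ fixed. The target exponent is $1/(p-1)+c(p-2)=1-(1-c)(p-2)+O((p-2)^2)$, which tends to $1$, so the deficiency $1-\beta$ is of order $p-2$. A naive argument loses this. We must therefore capture the first-order behaviour in $p-2$, and we write $u_p=h+(p-2)v+o(p-2)$. Formally $v$ solves
\begin{equation*}
\Delta v=-\operatorname{div}\big(\log|\nabla h|\,\nabla h\big)=-\frac{\langle D^2h\,\nabla h,\nabla h\rangle}{|\nabla h|^2}.
\end{equation*}

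The key quantity is the first variation, in $p$, of the gradient excess $E(r)=\sup_{B_r}|\nabla u-\ell_r|$ or an $L^2$ analogue. For harmonic $h$ with $\nabla h(0)=0$, whose leading term is a homogeneous harmonic polynomial $P$ of degree $m\ge2$, the excess decays like $r^{m-1}$, which is at least $r^{1}$. We need that the correction $v$ reduces the decay exponent by at most $(1-c)(p-2)$ with uniform control. Concretely, for quadratic $P=\tfrac12\langle Ax,x\rangle$ with $\operatorname{tr}A=0$, we solve for the $(p-2)$-correction explicitly; it is homogeneous of degree $2+\gamma$ times $\log$ terms. We then show that the effective homogeneity is $2-\kappa(A)(p-2)$ with
\begin{equation*}
\kappa(A)\le \tfrac12<1-c.
\end{equation*}
The bound $\tfrac12$ is attained by the worst profile, and the hypothesis $c<1/2$ is exactly what this gap requires. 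Proving this sharp uniform gap estimate over all traceless $A$ (a spectral or eigenvalue computation on the sphere for the linearized operator $\Delta+(p-2)\partial_{\nu\nu}$, $\nu=\nabla P/|\nabla P|$) is the main obstacle. I would first try to compute the homogeneous solutions $|x|^{2-\kappa}\phi(\theta)$ perturbatively, and bound $\kappa$ via a Rayleigh-quotient argument using $\langle A\nu,\nu\rangle^2\le|A\nu|^2$.

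Finally, the uniformity in $p$ must be closed. The compactness contradiction is set up with the normalized excess ratio $\big(E(\rho)/\rho^{\beta_p}E(1)\big)$. Subtracting the harmonic limit and dividing by $p_k-2$ yields a limit problem for $v$, and on that problem the gap estimate contradicts the assumed failure. Iterating the one-step estimate, together with the nondegenerate alternative, gives $C^{1,\beta_{p,c}}$ at every point of $B_{1/2}$, with the stated uniform bound.
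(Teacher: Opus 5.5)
Your outline has the right architecture: compactness as $p\to2$, a first-order correction with $\Delta v=-\mathrm{div}(\nabla h\log|\nabla h|)$, and the threshold $\kappa\le\frac12<1-c$. However, the steps that must be exact to order $p-2$ do not close as written. You subtract the harmonic \emph{limit} $h$ and divide by $p_k-2$. Nothing controls the rate of $u_k\to h$, so this quotient need not be bounded, and $u_p=h+(p-2)v+o(p-2)$ remains formal.

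The missing device is the harmonic replacement $h_k$ of each normalized $v_k$, with the same boundary values. Then $w_k=(v_k-h_k)/\varepsilon_k$ solves $-\Delta w_k=\mathrm{div}\,G_{\varepsilon_k}(\xi_k+Dv_k)$ with $G_\varepsilon(z)=z(|z|^\varepsilon-1)/\varepsilon$ uniformly controlled, so $w_k$ is bounded in $W^{1,q}(B_1)\cap H^1_{\mathrm{loc}}(B_1)$. Meanwhile the zeroth-order term costs nothing: $\mathcal{E}_\rho(h_k)^2\le\rho^2\mathcal{E}_1(h_k)^2\le\rho^2$, because $1=\mathcal{E}_1(h_k)^2+\varepsilon_k^2\fint_{B_1}|Dw_k|^2$.

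This forces two further commitments. First, you must use the $L^2$ excess: orthogonality of spherical harmonics makes $\mathcal{E}_\rho(h)\le\rho\,\mathcal{E}_1(h)$ exact, and it turns the first variation into the bilinear form $\mathcal{B}_\rho$. Second, you need the equality case. Failure of decay forces the limit to be \emph{exactly} some $P_H$ with $\mathcal{E}_1(P_H)=1$, since any higher-order component gives strict inequality, an $O(1)$ gain. Only then is $\mathcal{B}_\rho(h_k,w_k)\to\mathcal{B}_\rho(P_H,w_H)=\mu(H)\rho^2\log\rho$ the entire first-order term.

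Separately, your degenerate case contains non-critical points with $|\nabla u(0)|\lesssim$ excess. There the normalized slopes converge to some $\xi_\infty\neq0$, the correction solves $\Delta w=-\mathrm{div}\big((\xi_\infty+Dh)\log|\xi_\infty+Dh|\big)$, and your quadratic-profile gap says nothing about it. The paper runs the $p\to2$ argument only at critical points, where $\xi_k=-Dv_k(0)\to-Dh(0)=0$ automatically, and proves decay there with the larger exponent $\beta_{p,\hat c}$, $\hat c=\frac12(c+\frac12)$. Then, at \emph{fixed} $p$, it handles $|Du(0)|\le\eta_\ast$ by a second compactness argument in $u$ that spends the margin $\beta_{p,\hat c}-\beta_{p,c}>0$ on constants, with Schauder estimates below the scale $|Du(0)|^{1/\beta_{p,c}}$.

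The other gap is the heart of the matter. The decisive estimate $\kappa(A)\le\frac12$, i.e.\ $\mu(H)\ge-\frac12$ with $A=2H$, is asserted rather than proved, and the tool you name points the wrong way. With $\nu=Hx/|Hx|$, the inequality $\langle H\nu,\nu\rangle^2\le|H\nu|^2$ is $P_{H^3}^2\le P_{H^4}P_{H^2}$. That is nonnegativity of the weighted variance in $\mu(H)=-\frac{2}{\mathrm{tr}(H^2)}\fint_{\mathbb{S}^{d-1}}V_\nu(\lambda)$, so it proves $\kappa\ge0$, not an upper bound. Inserted into a Cauchy--Schwarz bound for the Rayleigh quotient $\kappa=\frac{d}{\mathrm{tr}(H^2)}\fint_{\mathbb{S}^{d-1}}P_HP_{H^3}/P_{H^2}$ together with $|H\nu|\le\|H\|$, it gives only $\kappa\le\sqrt{2(d-1)/(d+2)}$. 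This is never $\le\frac12$ and is $\ge1$ once $d\ge4$.

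What is needed is the uniform upper bound $\fint_{\mathbb{S}^{d-1}}V_\nu(\lambda)\le\frac14\mathrm{tr}(H^2)$. The elementary pointwise bound $V_\nu(\lambda)\le\frac14(\max_i\lambda_i-\min_i\lambda_i)^2\le\frac12\mathrm{tr}(H^2)$, plus compactness in $H$, yields only $\kappa\le1-\delta_d$. That is enough for the $C^{p'}$ conjecture but not for all $c<\frac12$. The paper's sharp bound uses several ingredients:
\begin{itemize}
\item a Gaussian representation of the spherical average;
\item the formula $S^{-2}=\int_0^\infty te^{-tS}\,\mathrm{d}t$;
\item the weighted-variance inequality $V_\nu(\lambda)\le(\frac14+\frac12\sum_i\nu_i^2)|\lambda|^2$;
\item the identity $\mathcal{L}''=\mathcal{L}(\frac14M_1^2+\frac12M_2)$ with $\int_0^\infty t\mathcal{L}''\,\mathrm{d}t=1$.
\end{itemize}
With any weaker bound $\kappa\le K$ that you can actually prove, your argument yields the theorem only for $c<1-K$.
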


Let us discuss the meaning and sharpness of the upper bound $1/2$ for $c$ in Theorem~\ref{thm-main-p-harmonic}. A planar $p$-harmonic function may be regarded as a $p$-harmonic function in $\mathbb{R}^d$ that is independent of the remaining variables. Writing $p=2+\varepsilon$, the optimal exponent $\alpha^{\ast}(2, p)$ from \cite{IM89} satisfies
 \begin{equation}\label{eq-optimal-IM}
     \alpha^{\ast}(2, 2+\varepsilon)=1-\frac{1}{2}\varepsilon+O(\varepsilon^2),
 \end{equation}
 while the quantitative exponent $\alpha_{\mathrm{ATU}}(2, p)$ from \cite{ATU17} satisfies
 \begin{equation*}
     \alpha_{\mathrm{ATU}}(2, 2+\varepsilon)=1-\frac{3}{4}\varepsilon+O(\varepsilon^2).
 \end{equation*}
Although the estimates in \cite{ATU17} were sufficient to deduce the $C^{p'}$-regularity conjecture, where
\begin{equation*}
    p'-1=1-\varepsilon+O(\varepsilon^2),
\end{equation*}
they did not recover the optimal first-order coefficient $-1/2$ of $\alpha^{\ast}(2,p)$ for $p$-harmonic functions. 

On the other hand, we observe that
 \begin{equation*}
     \beta_{p,c}=\frac{1}{1+\varepsilon}+c\varepsilon=1-(1-c)\varepsilon+O(\varepsilon^2).
 \end{equation*}
By comparing this expansion with \eqref{eq-optimal-IM}, the range $c<1/2$ in Theorem~\ref{thm-main-p-harmonic} is asymptotically optimal at the level of the first-order term. In other words, Theorem~\ref{thm-main-p-harmonic} implies the following corollary, which determines the first-order behavior of the optimal gradient H\"older exponent for $p$-harmonic functions as $p\to2^+$. For $d\geq2$ and $p>2$, let $\alpha_{\mathrm{opt}}(d,p)$ denote the supremum of all $\alpha\in(0,1)$ such that every $p$-harmonic function in dimension $d$ is locally of class $C^{1,\alpha}$; in particular, $\alpha_{\mathrm{opt}}(2, p)=\alpha^{\ast}(2, p)$.

\begin{corollary}
\label{cor-optimal-asymptotics}
For every $d\geq2$,
\begin{equation}\label{eq-optimal-asymptotics}
    \lim_{p\to 2^+}
    \frac{
        \alpha_{\mathrm{opt}}(d,p)-\frac{1}{p-1}
    }{p-2}
    =
    \frac12.
\end{equation}
\end{corollary}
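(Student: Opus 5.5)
The limit in \eqref{eq-optimal-asymptotics} follows by squeezing $\alpha_{\mathrm{opt}}(d,p)$ between a lower bound from Theorem~\ref{thm-main-p-harmonic} and an upper bound from the planar exponent $\alpha^{\ast}(2,p)$. We write $p=2+\varepsilon$ throughout.

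For the lower bound, fix $c\in(0,1/2)$. By Theorem~\ref{thm-main-p-harmonic}, for $0<\varepsilon<\varepsilon_{d,c}$ every $p$-harmonic function in $B_1\subset\mathbb{R}^d$ belongs to $C^{1,\beta_{p,c}}_{\mathrm{loc}}(B_1)$. By translation and scaling this holds in every domain, since $p$-harmonicity is invariant under $x\mapsto x_0+rx$ and local regularity is a local statement. Hence $\alpha_{\mathrm{opt}}(d,p)\ge \beta_{p,c}$, and so
\begin{equation*}
    \frac{\alpha_{\mathrm{opt}}(d,p)-\frac{1}{p-1}}{p-2}\ \ge\ c \qquad \text{for all } p\in(2,2+\varepsilon_{d,c}).
\end{equation*}
Taking the $\liminf$ as $p\to2^+$ and then letting $c\uparrow 1/2$ gives a $\liminf$ of at least $1/2$.

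For the upper bound, take a planar $p$-harmonic function $v(x_1,x_2)$ that fails to be $C^{1,\alpha}$ for some $\alpha>\alpha^{\ast}(2,p)$. Such functions exist because the exponent in \cite{IM89} is optimal; one can use the hodograph-type homogeneous solutions of Iwaniec--Manfredi. Set $u(x)=v(x_1,x_2)$, which is constant in the remaining variables. The $p$-Laplacian of $u$ involves only derivatives in $x_1,x_2$, and $|\nabla u|=|\nabla v|$, so $u$ is $p$-harmonic in $\mathbb{R}^d$ (or in a cylinder over the planar domain), weakly as well as classically. The gradient of $u$ has the same Hölder modulus as that of $v$, so $u\notin C^{1,\alpha}$ for any $\alpha>\alpha^{\ast}(2,p)$. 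Therefore $\alpha_{\mathrm{opt}}(d,p)\le\alpha^{\ast}(2,p)$.

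It remains to expand the explicit formula for $\alpha^{\ast}(2,p)$. We have $\frac{p}{p-1}=1+\frac{1}{1+\varepsilon}=2-\varepsilon+O(\varepsilon^2)$. Under the square root,
\begin{equation*}
    1+\frac{14}{1+\varepsilon}+\frac{1}{(1+\varepsilon)^2}=16-16\varepsilon+O(\varepsilon^2),
\end{equation*}
whose square root is $4-2\varepsilon+O(\varepsilon^2)$. Thus
\begin{equation*}
    \alpha^{\ast}(2,p)=\tfrac16\bigl(6-3\varepsilon\bigr)+O(\varepsilon^2)=1-\tfrac12\varepsilon+O(\varepsilon^2),
\end{equation*}
which confirms \eqref{eq-optimal-IM}. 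Also $\frac{1}{p-1}=1-\varepsilon+O(\varepsilon^2)$, so
\begin{equation*}
    \frac{\alpha^{\ast}(2,p)-\frac1{p-1}}{p-2}=\frac12+O(\varepsilon).
\end{equation*}
This makes the $\limsup$ at most $1/2$. Together with the lower bound, the limit exists and equals $1/2$.

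The only step that is not routine is the upper bound: it needs an explicit planar $p$-harmonic function showing that $\alpha^{\ast}(2,p)$ is not exceeded, which is exactly the optimality statement of \cite{IM89}, taken here as given. We also need to check that the cylindrical extension remains a $W^{1,p}$ weak solution. This is immediate by Fubini in the remaining variables, after restricting to a bounded cylinder so that $u\in W^{1,p}$.
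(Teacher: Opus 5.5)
Your proposal is correct and follows the same route as the paper. The lower bound comes from Theorem~\ref{thm-main-p-harmonic} by letting $c\to 1/2^-$. The upper bound comes from trivially extending the sharp Iwaniec--Manfredi planar examples to $\mathbb{R}^d$ and using the expansion $\alpha^{\ast}(2,2+\varepsilon)=1-\tfrac12\varepsilon+O(\varepsilon^2)$. Your explicit check of that expansion, and of the weak formulation of the cylindrical extension via Fubini, supplies details the paper leaves implicit.
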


To the best of our knowledge, this result was only known for $d=2$. For $d \geq 3$, the lower bound in \eqref{eq-optimal-asymptotics} follows from Theorem~\ref{thm-main-p-harmonic} by letting $c \to 1/2^-$, while the upper bound follows from the sharp planar examples of \cite{IM89} and their trivial extensions to higher dimensions.\\

Let us illustrate the main ideas in the proof of Theorem~\ref{thm-main-p-harmonic}. Away from the critical set $\{Du=0\}$, the $p$-Laplace equation is uniformly elliptic and the standard Schauder theory gives stronger regularity of $u$. Thus, motivated by the Campanato characterization, which relates H\"older gradient estimates to uniform decay of the gradient excess 
\begin{equation*}
         \mathcal{E}_r(u)= \left( \fint_{B_r} |Du-(Du)_{B_r}|^2\right)^{1/2},
\end{equation*}
we investigate the decay of this excess at critical points. For harmonic functions, this excess enjoys a sharp linear decay estimate, with equality only for quadratic profiles. Since the desired exponent $\beta_{p,c}$ converges to $1$ as $p \to 2^+$, qualitative compactness alone does not suffice to complete the contradiction argument. To overcome this difficulty, we further present an asymptotic analysis of the first variation of $\mathcal{E}_r(u)$. The argument proceeds through the following steps.

\medskip
\noindent\textit{Step 1: Compactness.} We suppose that the desired excess decay fails at a critical point. After subtracting an affine function and normalizing by the gradient excess, we obtain a normalized solution $v_{\varepsilon}$ for $\varepsilon=p-2$, and let $h_{\varepsilon}$ be its harmonic replacement. Then Lemma~\ref{lem-compactness} shows that both $v_{\varepsilon}$ and $h_{\varepsilon}$ converge to a harmonic function $h$ in appropriate senses. Moreover, the first-order correction $w_{\varepsilon}=(v_{\varepsilon}-h_{\varepsilon})/{\varepsilon}$ converges to a function $w$ satisfying
\begin{equation*}
    -\Delta w=\mathrm{div}(Dh \log|Dh|) \quad \text{in $B_1$}.
\end{equation*}
We note that this equation is closely related to the first-order term in the expansion
\begin{equation*}
    |z|^{\varepsilon}z=z+\varepsilon z\log|z|+o(\varepsilon) \quad \text{as $\varepsilon \to 0$}.
\end{equation*}

\medskip
\noindent\textit{Step 2: Harmonic rigidity.} The next step is a rigidity argument that characterizes the limiting harmonic function $h$. In the normalized setting, the gradient excess of $h$ satisfies 
\begin{equation*}
    \mathcal{E}_{\rho}(h) \leq \rho \mathcal{E}_1(h) \quad \text{for every $\rho \in (0,1)$}.
\end{equation*}
If the inequality is strict, then compactness already gives the desired improvement of regularity. In the equality case, a spherical harmonic decomposition shows that 
\begin{equation*}
    h=P_H(x) \coloneqq x^{\top}Hx
\end{equation*}
for a nonzero trace-free symmetric matrix $H$; see Lemma~\ref{lem-harmonic-rigidity} for details. In short, it only remains to find a contradiction when $h$ is a homogeneous harmonic polynomial of degree two.

\medskip
\noindent\textit{Step 3: Uniform gap.} When $h=P_H$, the limit equation becomes
\begin{equation*}
    -\Delta w_H=\mathrm{div}(DP_H \log|DP_H|) \quad \text{in $B_1$}.
\end{equation*}
The first variation of the gradient excess is given by the bilinear form
\begin{equation*}
    \mathcal{B}_{\rho}(P_H, w_H)=\mu(H) \rho^2\log\rho(\mathcal{E}_1(P_H))^2,
\end{equation*}
where $\mu(H)$ denotes the coefficient of the degree-two component of $w_H$ in the $P_H$-direction; see Lemma~\ref{lem-identity}. Therefore, the problem essentially reduces to estimating the quantity $\mu(H)$ for every nonzero trace-free symmetric matrix $H$ in a uniform sense.

We establish in Proposition~\ref{prop-uniform-gap}, by a direct deterministic method, that 
\begin{equation*}
    \inf \mu(H) \geq -1+\delta_d \quad \text{for some $\delta_d>0$},
\end{equation*}
where the infimum is taken over all nonzero trace-free symmetric matrices $H$. This uniform gap yields some H\"older gradient exponent strictly larger than $1/(p-1)$ for $p$-harmonic functions. Therefore, it is sufficient to prove the $C^{p'}$-regularity conjecture in Theorem~\ref{cor-conjecture} with the aid of \cite[Theorem~2]{ATU18}.

Let us explain why $-1$ is the natural threshold for the quantity $\mu(H)$ in a heuristic sense. Suppose formally that normalized solutions $v_{\varepsilon}$ admit an asymptotic expansion
\begin{equation*}
    v_{\varepsilon}=P_H+\varepsilon w_H+o(\varepsilon),
\end{equation*}
where $\mathcal{E}_1(P_H)=1$. Then Lemmas~\ref{lem-harmonic-rigidity} and \ref{lem-identity} imply, for each $\rho \in (0,1)$, that
\begin{equation*}
    (\mathcal{E}_{\rho}(v_{\varepsilon}))^2 = (\mathcal{E}_{\rho}(P_H))^2+2\varepsilon \mathcal{B}_{\rho}(P_H, w_H)+o(\varepsilon)=\rho^2(1+2\varepsilon\mu(H)\log\rho)+o(\varepsilon).
\end{equation*}
By using the Taylor expansion $\rho^{2+2\gamma}=\rho^2(1+2\gamma \log \rho)+O(\gamma^2)$, we obtain
\begin{equation*}
    \mathcal{E}_{\rho}(v_{\varepsilon}) =\rho^{1+\varepsilon \mu(H)}+o(\varepsilon).
\end{equation*}
Since the desired $C^{p'}$-scale corresponds to 
\begin{equation*}
    \rho^{\frac{1}{p-1}}=\rho^{1-\varepsilon}+O(\varepsilon^2),
\end{equation*}
$-1$ is the relevant threshold for $\mu(H)$. This observation will be rigorously justified in Lemma~\ref{lem-initial-step}. To obtain the full range $c\in(0,1/2)$ in Theorem~\ref{thm-main-p-harmonic}, we further establish the sharp bound $\mu(H)\geq-1/2$ in Appendix~\ref{sec-appendix}; see Proposition~\ref{prop:mu-lb}. 

\medskip
\noindent\textit{Step 4: Iteration.} By combining all steps above, one can obtain an improved excess decay at critical points. After distinguishing two regimes, namely, near-critical regime (Lemma~\ref{lem-near-critical}) and non-degenerate regime (Lemma~\ref{lem-non-degenerate}), a rather standard iteration argument allows us to finish the proof of Theorem~\ref{thm-main-p-harmonic}.\\

 We next present two applications of Theorem~\ref{cor-conjecture}. First, it yields the optimal spatial regularity for the parabolic $p$-Laplace equation near $p=2$. To be precise, let $u$ be a bounded weak solution of
\begin{equation*}
    u_t=\Delta_pu \quad\text{in $Q_1$},
\end{equation*}
where $ Q_r=B_r\times(-r^2,0]$, and suppose that
$p\in(2,2+\varepsilon_d)$.
Since the time derivative $u_t$ is locally bounded by Lee--Lian--Yun--Zhang~\cite[Theorem~1.1]{LLYZ25}, an application of Theorem~\ref{cor-conjecture} to $-\Delta_pu=-u_t$ on each time slice gives interior $C^{1,1/(p-1)}$ estimates in the spatial variables. Combining these estimates with the standard interpolation argument leads to
\begin{equation*}
    |Du(x,t)-Du(y,s)|
    \leq C\left(
        |x-y|^{1/(p-1)}+|t-s|^{1/p}
    \right)
\end{equation*}
for all $(x,t),(y,s)\in  Q_{1/2}$, where $C>0$ depends only on $d$, $p$, and $\|u\|_{L^\infty( Q_1)}$.

The second application concerns the optimal interior $C^{p'}$ regularity for the obstacle problem for the $p$-Laplacian. Let $u$ be a minimizer of
\begin{equation*}
    \int_{B_1}\left(\frac1p|Dv|^p-fv\right)\,\mathrm{d}x
\end{equation*}
among functions above an obstacle $\varphi\in C^2(\overline B_1)$ with smooth admissible Dirichlet data, where $f\in L^\infty(B_1)$. The Lewy--Stampacchia inequality (see \cite[Section~2, (2.5)]{Rod05} for instance) gives
\begin{equation*}
    f\leq-\Delta_pu\leq\max\{f,-\Delta_p\varphi\}
    \quad\text{a.e. in $B_1$}.
\end{equation*}
Since $p>2$ and $\varphi\in C^2(\overline B_1)$, we have $\Delta_p\varphi \in L^{\infty}(B_1)$, and so Theorem~\ref{cor-conjecture} guarantees that $u\in C^{p'}_{\mathrm{loc}}(B_1)$ for $p\in(2,2+\varepsilon_d)$. It is noteworthy that Andersson--Lindgren--Shahgholian~\cite[Theorem~1]{ALS15} developed the corresponding pointwise $C^{p'}$-growth estimate of $u$ at free boundary points $x_0 \in \partial\{u>\varphi\}$, while Theorem~\ref{cor-conjecture} provides the optimal $C^{p'}$ regularity throughout the interior, both away from and across the free boundary.

\bigskip

 Let us finally describe the organization of the paper. In Section~\ref{sec-preliminaries}, we introduce the gradient excess and the normalization used throughout the paper. In Section~\ref{sec-compactness}, we prove the compactness of the normalized solutions, their harmonic replacements, and the first-order corrections. In Sections~\ref{sec-harmonic-rigidity} and \ref{sec-uniform-gap}, we establish the harmonic rigidity result and investigate the first variation around quadratic profiles to identify the coefficient $\mu(H)$, respectively. Section~\ref{sec-proof-main} contains the excess decay and iteration arguments, and the proofs of Theorem~\ref{thm-main-p-harmonic} and Theorem~\ref{cor-conjecture}. Finally, Appendix~\ref{sec-appendix} is devoted to the proof of the sharp lower bound for $\mu(H)$.

\section{Preliminaries}\label{sec-preliminaries}
Let $B_1=B_1(0) \subset \mathbb{R}^d$ for $d \geq 2$ and $p \in (2, \infty)$. For $f\in H^1(B_r)$ with $r \in (0,1]$, we define the \emph{gradient excess} (or the \emph{excess}, for short)
\begin{equation*}
         \mathcal{E}_r(f) \coloneqq \left( \fint_{B_r} |Df-(Df)_{B_r}|^2\,\mathrm{d}x\right)^{1/2},
\end{equation*}
where
\begin{equation*}
    (Df)_{B_r} \coloneqq \fint_{B_r}Df(x)\,\mathrm{d}x  \in \mathbb{R}^d.
\end{equation*}
For $f, g \in H^1(B_r)$ with $r \in (0 ,1]$, we also define the associated symmetric bilinear form
\begin{equation*}
    \mathcal{B}_{r}(f, g) \coloneqq\fint_{B_{r}}(Df-(Df)_{B_{r}}) \cdot (Dg-(Dg)_{B_{r}})\,\mathrm{d}x.
\end{equation*}
Since
\begin{equation*}
    \mathcal{B}_r(f, g)=\frac{1}{2}\left.\frac{\mathrm{d}}{\mathrm{d}t}\right|_{t=0}(\mathcal{E}_r(f+tg))^2,
\end{equation*}
$\mathcal{B}_r(f, g)$ can be understood as the first variation at $f$ in the direction $g$ of the (squared) gradient excess.\\

Since we are interested in the near $p=2$ regime, we let $\varepsilon=p-2>0$ which will be sufficiently small. Suppose that $u$ is a $p$-harmonic function in $B_1$ such that $Du(0)=0$ and $\mathcal{E}_r(u)>0$ with $r \in (0,1]$. We define the \emph{normalized solution} $v$ by
\begin{equation*}
    v(x)=v_{\varepsilon}(x) \coloneqq \frac{u(rx)-u(0)-(Du)_{B_r} \cdot (rx)}{r\mathcal{E}_r(u)}
\end{equation*}
and the \emph{normalized slope} $\xi$ by
\begin{equation*}
    \xi=\xi_{\varepsilon}\coloneqq\frac{(Du)_{B_r}}{\mathcal{E}_r(u)}.
\end{equation*}
 Here the subscript $\varepsilon$ records the dependence on $p=2+\varepsilon$; we may omit it if there is no confusion. Then it is immediate to check that
\begin{equation}\label{eq-normalized1}
    -\mathrm{div}(|\xi+Dv|^{p-2}(\xi+Dv))=0 \quad \text{in $B_1$}
\end{equation}
 and
\begin{equation}\label{eq-normalized2}
    v(0)=0, \quad (Dv)_{B_1}=0, \quad \fint_{B_1}|Dv|^2=1, \quad \text{and} \quad \xi+Dv(0)=0.
\end{equation}
Moreover, any pair $(v, \xi) \in W^{1, p}(B_1) \times \mathbb{R}^d$ satisfying \eqref{eq-normalized1} and \eqref{eq-normalized2} will be called a \emph{normalized pair}.

The \emph{harmonic replacement} of $v=v_{\varepsilon} \in W^{1, p}(B_1)$ is the unique function $h=h_{\varepsilon} \in H^1(B_1)$ satisfying
\begin{equation}\label{eq-harmonic-replacement}
    h-v \in H_0^1(B_1) \quad \text{and} \quad \int_{B_1} Dh \cdot D\varphi=0 \quad \text{for any $\varphi \in H_0^1(B_1)$}.
\end{equation}
It is easy to check that $(Dh)_{B_1}=(Dv)_{B_1}=0$. We finally define 
\begin{equation*}
    w_{\varepsilon} \coloneqq \frac{v_{\varepsilon}-h_{\varepsilon}}{\varepsilon},
\end{equation*}
which represents the \emph{first-order correction} to the harmonic replacement $h_{\varepsilon}$ in the asymptotic expansion of $v_{\varepsilon}$ as $\varepsilon \to 0$. In fact, the Taylor expansion in the parameter $\varepsilon$ gives that
    \begin{equation*}
        |z|^{\varepsilon}z=z+\varepsilon z\log|z|+o(\varepsilon) \quad \text{as $\varepsilon \to 0$},
    \end{equation*}
and so formally,
\begin{equation*}
    -\Delta_pf=-\mathrm{div}(|Df|^{\varepsilon}Df)\approx-\Delta f-\varepsilon\mathrm{div}(Df \log|Df|).
\end{equation*}

We end this section with a consequence of \cite[Theorem~2]{ATU18} that connects Theorems~\ref{cor-conjecture} and \ref{thm-main-p-harmonic}.

\begin{thmletter}[{\cite[Theorem~2]{ATU18}}]
\label{thm-ATU18}
Let $d\geq2$ and $p>2$.
Assume that there exist
$\alpha_0\in(1/(p-1),1)$ and $C_0>0$ such that every
bounded $p$-harmonic function $h\in W^{1,p}(B_1)$ satisfies
\begin{equation*}
    \|h\|_{C^{1,\alpha_0}(\overline B_{1/2})}
    \leq C_0\|h\|_{L^\infty(B_1)}.
\end{equation*}
Then every weak solution $u\in W^{1,p}(B_1)$ of
\begin{equation*}
    -\Delta_pu=f\in L^\infty(B_1) \quad\text{in $B_1$}
\end{equation*}
belongs to
$C^{1,1/(p-1)}_{\mathrm{loc}}(B_1)$ and satisfies
\begin{equation*}
    \|u\|_{C^{1,\frac{1}{p-1}}(\overline B_{1/2})}
    \leq C\left(
        \|u\|_{L^p(B_1)}
        +\|f\|_{L^\infty(B_1)}^{\frac{1}{p-1}}
    \right),
\end{equation*}
where $C>0$ depends only on $d$, $p$, $\alpha_0$, and $C_0$.
\end{thmletter}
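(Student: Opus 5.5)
The plan is to follow the geometric iteration scheme of Ara\'ujo--Teixeira--Urbano, in intrinsic $C^{p'}$ scaling. First, by scaling I reduce to a smallness regime. Replacing $u$ by $\tilde u(x)=u(x_0+rx)/M$ with suitable $r$ and $M\sim \|u\|_{L^\infty}+\|f\|_{L^\infty}^{1/(p-1)}$, I may assume $\|u\|_{L^\infty(B_1)}\leq1$ and $\|f\|_{L^\infty(B_1)}\leq\delta$ for a small $\delta$ to be chosen. The $L^\infty$ bound comes from $\|u\|_{L^p}$ by standard local boundedness. The scaling works because $-\Delta_p\tilde u=r^pM^{1-p}f(x_0+r\cdot)$.

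\textbf{Approximation lemma.} For every $\eta>0$ and $L\geq0$ there is $\delta>0$ with the following property. Let $|q|\leq L$, let $\|u\|_{L^\infty(B_1)}\leq1$, and let $-\mathrm{div}(|q+Du|^{p-2}(q+Du))=f$ with $\|f\|_\infty\leq\delta$. Then there is $h$ with $q\cdot x+h$ $p$-harmonic in $B_{1/2}$ and $\|u-h\|_{L^\infty(B_{1/2})}\leq\eta$. I will prove this by contradiction. Take a sequence with $f_j\to0$ and $q_j\to q$. The classical $C^{1,\beta}$ estimates of DiBenedetto and Tolksdorf give compactness in $C^1_{\mathrm{loc}}$. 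Monotonicity of $z\mapsto|z|^{p-2}z$ then lets one pass to the limit in the weak formulation, so the limit $h$ makes $q\cdot x+h$ $p$-harmonic. The function $q\cdot x+h$ is bounded by $1+L$, so the hypothesis applies to it. Subtracting the affine part shows that $h$ inherits $\|h\|_{C^{1,\alpha_0}(\overline B_{1/4})}\leq C_0(1+L)$, with a constant independent of $q$.

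\textbf{Iteration.} Fix $\rho\in(0,1/4)$ so small that $C_0(1+L)\rho^{1+\alpha_0}\leq\frac12\rho^{1+\frac1{p-1}}$. This is possible exactly because $\alpha_0>1/(p-1)$, and it is the only place the strict inequality is used. Take $\eta=\frac12\rho^{p'}$ and let $\delta$ be given by the lemma. I then build affine functions $\ell_k(x)=a_k+b_k\cdot x$ with
\begin{equation*}
\sup_{B_{\rho^k}}|u-\ell_k|\leq\rho^{kp'},\qquad |a_{k+1}-a_k|\leq C\rho^{kp'},\qquad |b_{k+1}-b_k|\leq C\rho^{k/(p-1)}.
\end{equation*}
The inductive step rescales to $u_k(x)=\big(u(\rho^kx)-\ell_k(\rho^kx)\big)/\rho^{kp'}$. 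This function solves the shifted equation with $q_k=b_k\rho^{-k/(p-1)}$ and with right-hand side $f(\rho^k\cdot)$, whose norm is still at most $\delta$ because $p'$ is the natural scaling exponent. I then apply the lemma and the $C^{1,\alpha_0}$ Taylor bound for $h$ at scale $\rho$.

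\textbf{Main obstacle.} The difficulty is that the shift $q_k$ must stay bounded by a fixed $L$, say $L=1$. I will run the iteration at a point $x_0$ only as long as $|b_k|\leq\rho^{k/(p-1)}$. Let $k_0$ be the first index where this fails. At that scale $|Du|\gtrsim\rho^{k_0/(p-1)}$ on $B_{c\rho^{k_0}}(x_0)$, thanks to the $C^1$ closeness from the lemma. After rescaling by $\rho^{k_0}$, the equation is therefore uniformly elliptic with ellipticity comparable to $|b_{k_0}|^{p-2}$. Schauder/$C^{1,\alpha}$ theory for uniformly elliptic equations with bounded right-hand side then gives $C^{1,1/(p-1)}$ bounds on the remaining scales, which match the iteration's bounds at the switching scale. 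If the iteration never stops, then $Du(x_0)=\lim_k b_k$ and $|Du(x)-Du(x_0)|\leq C|x-x_0|^{1/(p-1)}$ follow directly.

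\textbf{Conclusion.} Combining the two regimes at each $x_0\in B_{1/2}$ gives the pointwise $C^{1,1/(p-1)}$ estimate. A standard covering and Campanato argument then turns it into the stated uniform norm bound, with constants depending only on $d,p,\alpha_0,C_0$.
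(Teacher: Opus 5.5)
This theorem is not proved in the paper; it is imported from \cite[Theorem~2]{ATU18}. The natural comparison is therefore with the Ara\'ujo--Teixeira--Urbano scheme, whose two-regime structure the paper itself reproduces for $p$-harmonic functions in Lemmas~\ref{lem-near-critical}--\ref{lem-iteration}. Your argument is correct, but it is organized differently.

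\textbf{The two routes.} In the ATU scheme no affine functions are subtracted. One iterates the oscillation of $u-u(x_0)$ under the intrinsic scaling $u(x_0+rx)/r^{p'}$, which leaves the unshifted equation $-\Delta_pu=f$ and the smallness of $\|f\|_{L^\infty}$ invariant. The gradient enters only through the single number $|Du(x_0)|$, which fixes the switching scale $r\sim|Du(x_0)|^{p-1}$ (compare $\lambda=|Du(0)|^{1/\beta_{p,c}}$ in Lemma~\ref{lem-iteration}). You instead run an improvement-of-flatness iteration with affine corrections $\ell_k$. This forces you to approximate by solutions of the shifted problems $-\mathrm{div}(|q+Dv|^{p-2}(q+Dv))=f$, to keep $|q_k|\le 1$ through a stopping time on the approximate slopes $b_k$, and to trigger the switch by $|b_{k_0}|>\rho^{k_0/(p-1)}$ rather than by $|Du(x_0)|$.

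\textbf{What each buys.} Your route gives a direct Campanato-type description of $u$ at $x_0$, with $Du(x_0)=\lim_k b_k$ when the iteration never stops. It also makes transparent that $\alpha_0>1/(p-1)$ is used only in the choice of $\rho$. The cost is that turning ``$|b_{k_0}|$ large'' into $|Du|\gtrsim\rho^{k_0/(p-1)}$ on a ball around $x_0$ genuinely requires $C^1$-closeness at step $k_0-1$. Your choice $\eta=\tfrac12\rho^{p'}\le\tfrac18\rho^{1/(p-1)}$ is small enough for that.

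\textbf{Points to fix.}
\begin{itemize}
\item Your approximation lemma is stated only with $L^\infty$-closeness, yet the switching step uses $C^1$-closeness. Since the compactness proof yields $C^1_{\mathrm{loc}}$ convergence, simply state the lemma with $\|u-h\|_{C^1(\overline B_{1/2})}\le\eta$.
\item In the contradiction argument, the limit $h$ makes $q\cdot x+h$ $p$-harmonic, not $q_j\cdot x+h$. Use $h_j=h+(q-q_j)\cdot x$, so that $q_j\cdot x+h_j=q\cdot x+h$ is $p$-harmonic.
\item The limit is only in $W^{1,p}_{\mathrm{loc}}(B_1)$, so apply the hypothesis to a dilate such as $(q\cdot x+h)(\tfrac34\,\cdot)$.
\end{itemize}
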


\section{Compactness}\label{sec-compactness}
\begin{lemma}\label{lem-xi-bound}
    There exist $\varepsilon_0\in (0,1)$, $\gamma_0 \in (0,1)$, and $C_d>0$, depending only on $d$, with the following property: if $p=2+\varepsilon \in (2, 2+\varepsilon_0)$ and $(v, \xi)$ satisfies \eqref{eq-normalized1} and \eqref{eq-normalized2}, then
    \begin{equation*}
        |\xi| \leq C_d \quad \text{and} \quad \|v\|_{C^{1, \gamma_0}(\overline B_{2/3})} \leq C_d.
    \end{equation*}
\end{lemma}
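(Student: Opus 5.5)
We set $U(x) \coloneqq \xi\cdot x + v(x)$, so that \eqref{eq-normalized1} says exactly that $U$ is $p$-harmonic in $B_1$. In these terms $DU = \xi + Dv$, $DU(0)=0$, and $(DU)_{B_1}=\xi$. The plan is to invoke classical $C^{1,\gamma}$ regularity for $p$-harmonic functions (Uhlenbeck, DiBenedetto, Tolksdorf, Lieberman), with constants uniform in $p\in(2,3)$, say. The statement we use is: there exist $\gamma_0\in(0,1)$ and $C$ depending only on $d$ such that, for $p\in(2,3)$,
\begin{equation*}
    \|DU\|_{C^{0,\gamma_0}(\overline B_{2/3})} \leq C\Bigl(\fint_{B_1}|DU|^p\Bigr)^{1/p}.
\end{equation*}
Equivalently, one can state it as a sup bound $\sup_{B_{3/4}}|DU| \le C (\fint_{B_1}|DU|^2)^{1/2}$ followed by an oscillation decay estimate. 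Uniformity in $p$ near $2$ follows by tracking constants in those proofs, since all of them depend continuously on $p$ on compact subsets of $(1,\infty)$. To avoid needing $L^p$ control, we use the $L^2$-to-$L^\infty$ gradient bound, which follows from Moser iteration applied to the subsolution $|DU|^{p}$ (or to $(|DU|-k)_+$). It holds uniformly for $p$ in compact ranges and interpolates down to the $L^2$ norm.

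The key step is bounding $|\xi|$, and we argue by contradiction with a scaling argument. Suppose $|\xi|$ is large. Note that
\begin{equation*}
    \fint_{B_1}|DU|^2=|\xi|^2+1,
\end{equation*}
because $(Dv)_{B_1}=0$. Consider the rescaled function $\tilde U \coloneqq U/\sqrt{|\xi|^2+1}$, which is again $p$-harmonic, has normalized $L^2$ gradient, satisfies $D\tilde U(0)=0$, and has $\mathcal{E}_1(\tilde U)=(|\xi|^2+1)^{-1/2}$, which is small. The uniform $C^{1,\gamma_0}$ estimate gives
\begin{equation*}
    |D\tilde U(x)|=|D\tilde U(x)-D\tilde U(0)|\le C|x|^{\gamma_0}
\end{equation*}
on $B_{2/3}$. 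The mean $(D\tilde U)_{B_1}=\xi/\sqrt{|\xi|^2+1}$ is close to a unit vector $e$, while the $L^2$ deviation of $D\tilde U$ from this mean over $B_1$ is at most $(|\xi|^2+1)^{-1/2}$. Hence, on $B_{\delta}$,
\begin{equation*}
    \fint_{B_\delta}|D\tilde U-e|^2\le \delta^{-d}\fint_{B_1}|D\tilde U-e|^2 \lesssim \delta^{-d}\bigl(|\xi|^{-2}+\text{small}\bigr),
\end{equation*}
whereas the H\"older bound forces $|D\tilde U-e|\ge 1-C\delta^{\gamma_0}\ge 1/2$ on $B_\delta$. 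Fixing $\delta=\delta(d)$ first and then letting $|\xi|$ be large gives a contradiction. This yields $|\xi|\le C_d$.

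Once $|\xi|\le C_d$ is established, we have $\fint_{B_1}|DU|^2\le C_d^2+1$, and the regularity estimate gives $\|DU\|_{C^{0,\gamma_0}(\overline B_{2/3})}\le C_d$. Subtracting the constant $\xi$ gives the same bound for $Dv$. Since $v(0)=0$, the gradient bound also controls $\|v\|_{L^\infty(B_{2/3})}$, which yields $\|v\|_{C^{1,\gamma_0}(\overline B_{2/3})}\le C_d$.

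The main obstacle is justifying that the classical $C^{1,\gamma}$ estimate holds with $\gamma_0$ and constants independent of $p\in(2,2+\varepsilon_0)$. This requires a uniformity remark on the literature (DiBenedetto's and Lieberman's constants are stable as $p\to2$), or alternatively an appeal to the uniform perturbative estimates of Pimentel--Rampasso--Santos near $p=2$. The choice $\varepsilon_0<1$ keeps $p$ in a compact range, where this uniformity is standard.
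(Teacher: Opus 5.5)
Your proof is correct, and its core is the paper's: normalize $U=v+\xi\cdot x$ by $(1+|\xi|^2)^{1/2}$, invoke a $C^{1,\gamma_0}$ estimate that is uniform in $p$, and play the H\"older decay of $DU$ at the critical point $0$ against the $L^2$ normalization of $Dv=DU-\xi$. The paper does this last step directly: $|Dv+\xi|\le|\xi|/2$ on $B_\tau$ forces $1\ge |\xi|^2|B_\tau|/(4|B_1|)$. That is your contradiction argument with the constants made explicit.

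The real difference is in how the uniform estimate is fed by the $L^2$ gradient data. The paper works at the level of the function. Sobolev--Poincar\'e gives an $L^{p_0}$ bound on $\tilde v-(\tilde v)_{B_1}$; this is where $2+\varepsilon_0<2^\ast$ is needed. The positive and negative parts are $p$-subharmonic, so a sup bound follows, and a $C^{1,\gamma_0}$ estimate from $L^\infty$ data closes the chain. The last two steps are quoted from a reference whose constants depend only on $d$ and a parameter $\mu=(p_0-1)^{-1}$ that covers the whole range $p\in(2,2+\varepsilon_0)$. This is what makes uniformity in $p$ rigorous there.

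You work at the level of the gradient. You use an $L^p\to L^\infty$ gradient bound and lower it to $L^2$ by the standard absorption argument: $\int|DU|^p\le\|DU\|_{L^\infty}^{p-2}\int|DU|^2$ on $B_\sigma$, Young's inequality, and an iteration lemma over radii $s<\sigma<1$, where $DU$ is a priori bounded. Oscillation decay then finishes. This avoids Sobolev embedding and any dimension-dependent smallness of $\varepsilon_0$. However, your uniformity in $p$ rests on a ``tracking constants'' remark. To close it, cite a statement whose constants are explicitly controlled by structural bounds, as the paper does with its reference. One example is Lieberman's general-growth framework, where for $g(t)=t^{p-1}$ the constants depend only on $d$ and bounds on $tg'(t)/g(t)=p-1$.

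Also, ``Moser iteration on the subsolution $|DU|^p$'' is loose. It is $|DU|^2$ that is a subsolution, and of the degenerate linearized operator with weight $|DU|^{p-2}$ (for regularized problems). The resulting sup bound is classical, but it is not a uniformly elliptic Moser argument.
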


\begin{proof}
    We set
    \begin{equation*}
         \tilde v(x) \coloneqq \frac{v(x)+\xi\cdot x}{(1+|\xi|^2)^{1/2}}.
    \end{equation*}
    It is immediate to check that $\tilde v \in W^{1, p}(B_1)$ is $p$-harmonic with $D\tilde v(0)=0$ and
    \begin{equation*}
        \fint_{B_1}|D\tilde v|^2=\frac{\fint_{B_1}|Dv+\xi|^2}{1+|\xi|^2}=1.
    \end{equation*}

    \begin{itemize}
        \item  We first fix $\varepsilon_0$ small enough so that $p_0 \coloneqq 2+\varepsilon_0 <2d/(d-2)=2^{\ast}$ for $d \geq 3$. Then the Sobolev--Poincar\'e inequality shows that
    \begin{equation*}
        \|\tilde v-(\tilde v)_{B_1}\|_{L^{p_0}(B_1)} \leq C_d\|D\tilde v\|_{L^2(B_1)} \leq C_d.
    \end{equation*}
        For $d=2$, we use the embedding $H^1(B_1) \hookrightarrow L^{q}(B_1)$ for any fixed $q \in (2, \infty)$.
    
      \item The positive and negative parts of $\tilde v-(\tilde v)_{B_1}$ are nonnegative $p$-subharmonic functions. Applying \cite[Lemma~2.8]{DP06} to each part, with $\mu=(p_0-1)^{-1}$, gives
    \begin{equation*}
        \|\tilde v-(\tilde v)_{B_1}\|_{L^{\infty}(B_{7/8})} \leq C_d\|\tilde v-(\tilde v)_{B_1}\|_{L^{p_0}(B_1)}.
    \end{equation*}

   \item By applying \cite[Lemma~2.5]{DP06} for $\tilde v-(\tilde v)_{B_1}$ with $\mu=(p_0-1)^{-1}$, we obtain $\gamma_0=\gamma_0(\mu,d)\in(0,1)$,
    independent of $p\in(2,2+\varepsilon_0)$, such that the uniform estimate
    \begin{equation*}
        \|\tilde v-(\tilde v)_{B_1}\|_{C^{1, \gamma_0}(\overline B_{2/3})} \leq C_d \|\tilde v-(\tilde v)_{B_1}\|_{L^{\infty}(B_{7/8})}
    \end{equation*}
   is available.
    \end{itemize}
    A combination of these estimates yields that
    \begin{equation}\label{eq-Z}
        \|\tilde v-(\tilde v)_{B_1}\|_{C^{1, \gamma_0}(\overline B_{2/3})} \leq C_d.
    \end{equation}
    In particular, we have
    \begin{equation*}
        |Dv(x)+\xi| \leq C_d|x|^{\gamma_0}(1+|\xi|^2)^{1/2} \quad \text{for $x\in B_{2/3}$}.
    \end{equation*}
    If $|\xi| \leq 1$, then there is nothing to prove; we may assume that $|\xi| \geq 1$. Then choose $\tau=\tau_d \in (0,2/3)$ small enough so that
    \begin{equation*}
        |Dv(x)+\xi| \leq |\xi|/2 \quad \text{in $B_{\tau}$},
    \end{equation*}
    which implies
     \begin{equation*}
        1=\fint_{B_1}|Dv|^2 \geq \frac{|B_{\tau}|}{|B_1|} \frac{|\xi|^2}{4}.
    \end{equation*}
    This gives the desired bound on $|\xi|$. Since $\tilde v(0)=0$, \eqref{eq-Z} gives $|(\tilde v)_{B_1}|\leq C_d$. The estimate for $v$ now follows from \eqref{eq-Z} together with these bounds.
\end{proof}

We now prove an important compactness lemma regarding first-order corrections $w_{\varepsilon}$.
\begin{lemma}[Compactness]\label{lem-compactness}
     Let $\varepsilon_j \to 0$, and let $(v_j, \xi_j)$ satisfy \eqref{eq-normalized1} and \eqref{eq-normalized2} with $p_j=2+\varepsilon_j$. Let $h_j$ be the harmonic replacement of $v_j$, and set $w_j=(v_j-h_j)/\varepsilon_j$. Then up to a subsequence, the following hold:
     \begin{enumerate}[(i)]
         \item $\xi_j \to 0$ in $\mathbb{R}^d$;

         \item $h_j \rightharpoonup h$ weakly in $H^1(B_1)$ and $h_j \to h$ locally uniformly in $B_1$ for some harmonic $h$ with $h(0)=Dh(0)=0$;

         \item $v_j \to h$ strongly in $C^1(\overline B_R)$ for every $R \in (0,1)$;

         \item $w_j \rightharpoonup w$ weakly in $W^{1,q}(B_1)$ for some $q\in (1, 2)$ and in $H^1(B_R)$ for every $R \in (0,1)$, where $w \in W_0^{1,q}(B_1)$ is the unique solution of 
    \begin{equation*}
        -\Delta w=\mathrm{div}(Dh\log|Dh|) \quad \text{in $B_1$}.
    \end{equation*}
     \end{enumerate}
\end{lemma}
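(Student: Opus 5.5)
Lemma~\ref{lem-xi-bound} gives $|\xi_j|\le C_d$ and $\|v_j\|_{C^{1,\gamma_0}(\overline B_{2/3})}\le C_d$. Since the constants depend only on $d$, the same argument applied on $B_R$ after rescaling, together with the interior uniform $C^{1,\gamma_0}$ estimates for $p$-harmonic functions from \cite{DP06}, gives uniform $C^{1,\gamma_0}(\overline B_R)$ bounds for every $R<1$. Arzel\`a--Ascoli then yields a subsequence with $\xi_j\to\xi_\infty$ and $v_j\to v_\infty$ in $C^1(\overline B_R)$ for every $R<1$. We also have $v_j\rightharpoonup v_\infty$ weakly in $H^1(B_1)$, because $\|Dv_j\|_{L^2}$ is normalized. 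Passing to the limit in \eqref{eq-normalized1} is straightforward: $|\xi_j+Dv_j|^{\varepsilon_j}(\xi_j+Dv_j)\to \xi_\infty+Dv_\infty$ locally uniformly. Hence $v_\infty$ is harmonic, with $v_\infty(0)=0$ and $\xi_\infty+Dv_\infty(0)=0$.

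For (ii), $h_j$ is bounded in $H^1(B_1)$ since $\|Dh_j\|_{L^2}\le\|Dv_j\|_{L^2}$, so $h_j\rightharpoonup h$ weakly, and interior harmonic estimates give local uniform convergence to a harmonic $h$. To identify $h=v_\infty$, I would test the equations for $v_j$ and $h_j$ with $v_j-h_j\in W^{1,p}_0$. Writing $a_j(z)=|\xi_j+z|^{\varepsilon_j}(\xi_j+z)-\xi_j$, we get
\[
\int|D(v_j-h_j)|^2=\int (Dv_j-a_j(Dv_j))\cdot D(v_j-h_j).
\]
The integrand $|Dv_j-a_j(Dv_j)|$ is bounded by $C\varepsilon_j|\xi_j+Dv_j|(1+|\log|\xi_j+Dv_j||+|\xi_j+Dv_j|^{\varepsilon_j})$. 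This is the delicate point near $\partial B_1$, where only $W^{1,p}$ control is available. The first task is therefore a global higher integrability estimate for $Dv_j$, namely a uniform $L^{2+\sigma}(B_1)$ bound, or else working with $L^q$, $q<2$, duality as in (iv). With such a bound we get $\|D(v_j-h_j)\|_{L^2}=O(\varepsilon_j)$ in an appropriate sense, so $h=v_\infty$. Consequently $v_j\to h$ in $C^1(\overline B_R)$, which is (iii), and $h(0)=0$.

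For (i), I would show $\xi_\infty=0$, i.e.\ $Dh(0)=0$. Suppose instead $\xi_\infty\ne0$. Then $|\xi_j+Dv_j|$ is bounded away from zero near the origin, yet it vanishes at $0$ by \eqref{eq-normalized2}. Passing to the limit gives $Dh(0)=-\xi_\infty$, and the condition $(Dh)_{B_1}=0$ from the harmonic replacement, combined with the mean value property, gives $Dh(0)=(Dh)_{B_1}=0$. Hence $\xi_\infty=0$.

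For (iv), the function $w_j$ solves
\[
-\Delta w_j=\operatorname{div}\bigl((a_j(Dv_j)-Dv_j)/\varepsilon_j\bigr)\quad\text{in }B_1,\qquad w_j\in W^{1,p}_0(B_1).
\]
The right-hand field $F_j$ satisfies $|F_j|\le C|\xi_j+Dv_j|(1+|\log|\xi_j+Dv_j||)(1+|\xi_j+Dv_j|^{\varepsilon_j})$. Since $|z\log|z||\le C_q(1+|z|^{1+\delta})$, $F_j$ is bounded in $L^q(B_1)$ for any $q<2$, using only the $L^2$ bound on $Dv_j$ (and $L^p$ for the extra power, interpolated). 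The $W^{1,q}_0$ theory for the Laplacian in a ball (Calder\'on--Zygmund) then gives $\|w_j\|_{W^{1,q}}\le C$. On $B_R$, $F_j$ is bounded uniformly because $Dv_j$ is, so interior energy estimates give uniform $H^1(B_R)$ bounds. Moreover $F_j\to Dh\log|Dh|$ pointwise on $B_R$ by the Taylor expansion and (iii) (at points with $Dh=0$ both sides tend to $0$), and by domination also in $L^q(B_1)$, given uniform integrability from the bound in a slightly larger $L^{q'}$. Passing to the limit in the weak formulation gives the limit equation, and uniqueness of $W^{1,q}_0$ solutions of the Dirichlet problem (again by $L^q$ theory on the ball) shows that the whole subsequence converges. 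The main obstacle, I expect, is the boundary: controlling $Dv_j$ up to $\partial B_1$ well enough to make $F_j$ uniformly integrable and to run the $L^q$ duality.
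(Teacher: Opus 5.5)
Your proposal is correct and follows essentially the paper's proof: uniform $C^{1,\gamma_0}$ bounds from Lemma~\ref{lem-xi-bound}, the Dirichlet-principle bound on $h_j$, a uniform $W^{1,q}(B_1)$ bound on $w_j$ from the $L^q$ bound on $G_{\varepsilon_j}(\xi_j+Dv_j)$ using only the $L^2$ normalization, the mean value property giving $Dh(0)=(Dh)_{B_1}=0$ and hence $\xi_j\to0$, and passage to the limit in the equation for $w_j$. The global $L^{2+\sigma}$ bound you float is neither available (nothing controls $v_j$ near $\partial B_1$) nor needed: your own estimate in (iv) gives $\|D(v_j-h_j)\|_{L^q(B_1)}\le C\varepsilon_j$ plus interior $H^1$ bounds, which identifies $h$ with the $C^1_{\mathrm{loc}}$ limit of $v_j$ and already removes the boundary ``obstacle'' you anticipate; likewise (iv) needs no uniform $L^{p_j}$ bound, only $q(1+\varepsilon_j+\eta)\le 2$ for large $j$.
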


\begin{proof}
    Let us first develop the estimates for $w_j$. For any normalized pair $(v, \xi)$, let $h$ be its harmonic replacement and set $w_{\varepsilon}=(v-h)/\varepsilon \in H_0^1(B_1)$. Since $h$ is harmonic and $v$ satisfies
    \begin{equation*}
        -\mathrm{div}(|\xi+Dv|^{p-2}(\xi+Dv))=0,
    \end{equation*}
    we have
    \begin{equation*}
        -\Delta w_{\varepsilon}=\mathrm{div}G_{\varepsilon}(\xi+Dv),
    \end{equation*}
    where
    \begin{equation*}
        G_{\varepsilon}(z) \coloneqq z\frac{|z|^{\varepsilon}-1}{\varepsilon} \quad \text{for $z \in \mathbb{R}^d$}.
    \end{equation*}
     Here we may assume that $\varepsilon \in (0,\varepsilon_0)$ for $\varepsilon_0>0$ chosen in Lemma~\ref{lem-xi-bound}. Then we observe that
    \begin{itemize}
        \item for $0<t \leq 1$, 
        \begin{equation*}
            t\frac{1-t^{\varepsilon}}{\varepsilon} \leq t|\log t| \leq e^{-1},
        \end{equation*}
        while for $t \geq 1$,
         \begin{equation*}
            t\frac{t^{\varepsilon}-1}{\varepsilon} \leq t^{1+\varepsilon}\log t \leq C_{\eta}t^{1+\varepsilon+\eta} \quad \text{for each $\eta>0$};
        \end{equation*}

        \item it follows from \eqref{eq-normalized2} and Lemma~\ref{lem-xi-bound} that
    \begin{equation*}        \fint_{B_1}|\xi+Dv|^2=|\xi|^2+\fint_{B_1}|Dv|^2=|\xi|^2+1 \leq C_d^2+1,
    \end{equation*}
    that is, $\xi+Dv$ is uniformly bounded in $L^2(B_1)$;

    \item it follows from Lemma~\ref{lem-xi-bound} that 
    \begin{equation*}
        \|v\|_{C^{1,\gamma_0}(\overline B_R)} \leq C_{d, R} \quad \text{for every $R \in (0,1)$}.
    \end{equation*}
    \end{itemize}
    
    We now fix $q \in (1, 2)$ and then choose $\varepsilon_0$ and $\eta$ small enough so that
    \begin{equation*}
        q(1+\varepsilon_0+\eta) \leq 2,
    \end{equation*}
    and so for every $\varepsilon \in (0, \varepsilon_0)$,
    \begin{equation*}
        \|G_{\varepsilon}(\xi+Dv)\|_{L^q(B_1)} \leq  C_d \quad \text{and} \quad \|G_{\varepsilon}(\xi+Dv)\|_{L^{\infty}(B_R)} \leq  C_{d, R}. 
    \end{equation*}
    Therefore, the standard regularity theory for Laplace equations gives that
    \begin{equation}\label{eq-compactness-w}
        \|w_{\varepsilon}\|_{W^{1,q}(B_1)} \leq C_d \quad \text{and} \quad \|w_{\varepsilon}\|_{H^1(B_R)} \leq C_{d,R}.
    \end{equation}

We are now ready to prove the convergence results. It follows from the normalization \eqref{eq-normalized2}, Lemma~\ref{lem-xi-bound}, and the definition of harmonic replacements that $\{v_j\}$ and $\{h_j\}$ are uniformly bounded in $H^1(B_1)$. Thus, up to a subsequence, we have $h_j \rightharpoonup h$ weakly in $H^1(B_1)$ and $h_j \to h$ locally uniformly in $B_1$, where $h$ is harmonic in $B_1$.

By the uniform estimate \eqref{eq-compactness-w}, we have $w_j \rightharpoonup w \in W_0^{1,q}(B_1)$ weakly in $W^{1, q}(B_1)$ and in $H^1(B_R)$ for every $R \in (0,1)$. Moreover, it follows from
\begin{equation*}
    \|v_j-h_j\|_{H^1(B_R)}=\varepsilon_j\|w_j\|_{H^1(B_R)} \to 0
\end{equation*}
and the uniform $C^{1, \gamma_0}$ estimates for $v_j$ that $v_j \to h$ in $C^1(\overline B_R)$ for every $R \in (0,1)$. In particular, \eqref{eq-normalized2} shows that $h(0)=0$.

We next show that $\xi_j$ converges to zero. Since $h_j-v_j \in H_0^1(B_1)$ and $(Dv_j)_{B_1}=0$, we have
\begin{equation*}
   (Dh_j)_{B_1}=(Dv_j)_{B_1}=0.
\end{equation*}
Passing to the limit using the weak convergence in $H^1(B_1)$ and applying the mean value property for $h$ give 
\begin{equation*}
    0=(Dh)_{B_1}=Dh(0).
\end{equation*}
By recalling \eqref{eq-normalized2} and the local $C^1$ convergence, we arrive at
\begin{equation*}
    \xi_j=-Dv_j(0) \to -Dh(0)=0.
\end{equation*}
On the other hand, the function $G_{\varepsilon}$ converges to $z\log|z|$ locally uniformly in $\mathbb{R}^d$. In particular, we have
\begin{equation*}
  G_{\varepsilon_j}(\xi_j+Dv_j) \to Dh\log|Dh| \quad \text{locally uniformly in $B_1$}.
\end{equation*}
By noticing that $w_j$ satisfies the integral identity 
\begin{equation*}
    \int_{B_1} Dw_j \cdot D\varphi=-\int_{B_1} G_{\varepsilon_j}(\xi_j+Dv_j) \cdot D\varphi \quad \text{for any $\varphi \in C_c^{\infty}(B_1)$}, 
\end{equation*}
we can pass to the limit to conclude that
\begin{equation*}
    -\Delta w=\mathrm{div} (Dh\log|Dh|) \quad \text{in $B_1$}. \qedhere
\end{equation*}
\end{proof}

\section{Harmonic rigidity}\label{sec-harmonic-rigidity}
We say that a polynomial $\phi : \mathbb{R}^d \to \mathbb{R}$ is a \emph{homogeneous harmonic polynomial of degree $k$} if 
\begin{equation*}
    \Delta \phi=0 \quad \text{and} \quad \phi(tx)=t^k\phi(x)
\end{equation*}
for every $t \in \mathbb{R}$ and $x \in \mathbb{R}^d$. When $k=2$, every homogeneous polynomial can be written uniquely as $P_A$ for some $A \in \mathrm{Sym}_d\coloneqq \{A \in \mathbb{R}^{d \times d} : A^{\top}=A\}$, where
\begin{equation*}
    P_A(x) \coloneqq x^{\top} A x \quad \text{for $x \in \mathbb{R}^d$}.
\end{equation*}
Moreover, $P_A$ is harmonic if and only if $\operatorname{tr}A=0$. We use the same notation
$P_A$ for its restriction to $\mathbb{S}^{d-1}$, i.e.,
\begin{equation*}
    P_A(\omega)=\omega^{\top} A\omega
    \quad \text{for $\omega \in \mathbb{S}^{d-1}$}.
\end{equation*}

\begin{lemma}\label{lem-harmonic-rigidity}
    Let $h \in H^1(B_1)$ be harmonic in $B_1$ satisfying $(Dh)_{B_1}=0$. Then we have the inequality
    \begin{equation*}
        \mathcal{E}_{\rho}(h) \leq \rho \mathcal{E}_1(h) \quad \text{for each $\rho \in (0,1)$},
    \end{equation*}
    where equality holds if and only if 
    \begin{equation*}
        h-h(0)=P_H(x) \quad \text{for $H \in \mathrm{Sym}_d$ and $\mathrm{tr}H=0$}.
    \end{equation*}
\end{lemma}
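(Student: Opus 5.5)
We expand $h$ in homogeneous harmonic polynomials and reduce the statement to a monotonicity property of a weighted sum. Since $h\in H^1(B_1)$ is harmonic, we write $h=\sum_{k\ge 0}\phi_k$ in $B_1$, with $\phi_k$ homogeneous harmonic polynomials of degree $k$. The series converges in $H^1(B_r)$ for $r<1$, and by monotone convergence the identities below pass to $r=1$. The key observation is that the $\phi_k$ and also their gradients are mutually orthogonal on every ball $B_r$ centered at the origin. Indeed, $\int_{B_r}D\phi_k\cdot D\phi_l=\int_{\partial B_r}\phi_k\,\partial_\nu\phi_l=\frac{l}{r}\int_{\partial B_r}\phi_k\phi_l$, and spherical harmonics of different degrees are $L^2(\mathbb{S}^{d-1})$-orthogonal. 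Homogeneity gives $\int_{B_r}|D\phi_k|^2=r^{d+2k-2}\int_{B_1}|D\phi_k|^2$. We set $a_k\coloneqq\fint_{B_1}|D\phi_k|^2$. Then $\fint_{B_r}|D\phi_k|^2=r^{2k-2}a_k$.

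Next we identify the mean gradient. For $k\neq1$, the function $D\phi_k$ is a homogeneous harmonic vector of degree $k-1\ne0$. By the mean value property its average over $B_r$ equals $D\phi_k(0)=0$. For $k=1$, $D\phi_1$ is constant. Hence $(Dh)_{B_r}=D\phi_1$ for all $r$, and the hypothesis $(Dh)_{B_1}=0$ gives $\phi_1=0$. The term $\phi_0$ is constant and contributes nothing. Using the orthogonality,
\begin{equation*}
(\mathcal{E}_\rho(h))^2=\fint_{B_\rho}|Dh|^2=\sum_{k\ge2}\rho^{2k-2}a_k\le\rho^2\sum_{k\ge2}a_k=\rho^2(\mathcal{E}_1(h))^2,
\end{equation*}
since $\rho^{2k-2}\le\rho^2$ for $k\ge2$ and $\rho\in(0,1)$.

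For the equality case, equality forces $\sum_{k\ge3}(\rho^2-\rho^{2k-2})a_k=0$. Each coefficient $\rho^2-\rho^{2k-2}$ is strictly positive for $k\ge3$, so $a_k=0$ for all $k\ge3$. A polynomial $\phi_k$ with $\phi_k(0)=0$ and zero gradient vanishes, so $\phi_k=0$ for $k\ge3$. Therefore $h-h(0)=\phi_2=P_H$ with $H\in\mathrm{Sym}_d$, and $\operatorname{tr}H=0$ by harmonicity. Conversely, $DP_H=2Hx$ has zero mean on balls, and $\fint_{B_\rho}|2Hx|^2=\rho^2\fint_{B_1}|2Hx|^2$, so equality holds. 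This includes the trivial case $H=0$, where both sides vanish.

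The main technical point is justifying the expansion and orthogonality up to the boundary, since $h$ is only $H^1(B_1)$. We handle it by working on $B_r$ with $r<1$, where everything is smooth and the sums converge absolutely. We then let $r\to1$: monotone convergence gives $\int_{B_1}|Dh|^2=\sum_k\int_{B_1}|D\phi_k|^2$ and $(Dh)_{B_1}=\lim_{r\to1}(Dh)_{B_r}=D\phi_1$.
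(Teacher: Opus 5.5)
Your proof is correct and follows essentially the same route as the paper. Both expand $h$ into homogeneous harmonic polynomials, obtain orthogonality of the gradients on centered balls from the boundary identity $\int_{B_r}D\phi_k\cdot D\phi_l=\frac{l}{r}\int_{\partial B_r}\phi_k\phi_l$, use the mean value property to eliminate the degree-one term, rescale to get $\fint_{B_\rho}|D\phi_k|^2=\rho^{2k-2}a_k$, pass $\rho\to1$ by monotone convergence, and compare $\rho^{2k-2}\leq\rho^2$, which is strict for $k\geq3$.

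The differences are cosmetic. You cite the $L^2(\mathbb{S}^{d-1})$-orthogonality of spherical harmonics directly, whereas the paper swaps $k$ and $l$ in the boundary identity, which proves the same thing. You also spell out the converse direction, including $H=0$, and the limit $r\to1$ more explicitly than the paper does.
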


\begin{proof}
    It follows from the analyticity of harmonic functions that 
    \begin{equation*}
        h=\sum_{k=0}^{\infty}h_k, \quad \text{where $h_k$ is a polynomial of degree $k$}.
    \end{equation*}
    Since $\Delta h=0$ and the summands $\Delta h_k$ have distinct homogeneities, we find that each $h_k$ is a homogeneous harmonic polynomial of degree $k$. Moreover, since $(Dh)_{B_1}=Dh(0)$ by the mean value property and $h_1(x)=Dh(0)\cdot x$, the condition $(Dh)_{B_1}=0$ implies that $h_1 \equiv 0$. Thus, we have a decomposition
    \begin{equation*}
        h=h(0)+\sum_{k \geq 2} h_k,
    \end{equation*}
    where $h_k$ is a homogeneous harmonic polynomial of degree $k$.

    We next observe that for $k \neq l$,
    \begin{equation*}
        \int_{B_{\rho}}Dh_k \cdot Dh_l=\int_{\partial B_{\rho}}h_k \partial_{n}h_l=\frac{l}{\rho}\int_{\partial B_{\rho}}h_kh_l.
    \end{equation*}
    By interchanging $k$ and $l$, it turns out that
    \begin{equation*}
        \int_{B_{\rho}}Dh_k \cdot Dh_l=0.
    \end{equation*}
    Moreover, since $h_k$ is homogeneous of degree $k$, we have
    \begin{equation*}
        Dh_k(\rho x)=\rho^{k-1}Dh_k(x),
    \end{equation*}
    and so
    \begin{equation*}
        \fint_{B_{\rho}}|Dh_k|^2=\rho^{2(k-1)}\fint_{B_1}|Dh_k|^2.
    \end{equation*}
    Applying the mean value property to each component of $Dh$ yields
    \begin{align*}
        (Dh)_{B_\rho}=Dh(0)=(Dh)_{B_1}=0 \quad \text{for every $\rho \in (0,1)$}.
    \end{align*}
    Thus, we obtain
    \begin{equation*}
        \begin{aligned}
            (\mathcal{E}_{\rho}(h))^2&=\fint_{B_{\rho}}|Dh-(Dh)_{B_{\rho}}|^2=\fint_{B_{\rho}}|Dh|^2=\sum_{k \geq 2}\fint_{B_{\rho}}|Dh_k|^2\\
            &=\sum_{k \geq 2}\rho^{2(k-1)} \fint_{B_1}|Dh_k|^2 \quad \text{for each $\rho \in (0,1)$}.
        \end{aligned}
    \end{equation*}
    Letting $\rho\to 1^-$ in this identity and using the monotone convergence theorem, we have
\begin{equation*}
    (\mathcal E_1(h))^2=\sum_{k\geq2}\fint_{B_1}|Dh_k|^2.
\end{equation*}
    Furthermore, noting that $\rho^{2(k-1)} \leq \rho^2$ for $\rho \in (0,1)$ and $k \geq 2$, we arrive at
    \begin{equation*}
        (\mathcal{E}_{\rho}(h))^2 \leq \rho^{2}\sum_{k \geq 2} \fint_{B_1}|Dh_k|^2=\rho^2 (\mathcal{E}_1(h))^2,
    \end{equation*}
    where the equality holds if and only if $h_k \equiv 0$ for every $k \geq 3$.
\end{proof}

\section{Uniform gap}\label{sec-uniform-gap}
For $H \in \mathrm{Sym}_d \setminus \{0\}$ with $\mathrm{tr}H=0$, define
\begin{equation*}
    f_H(x) \coloneqq 
    \begin{cases}
        -2\dfrac{P_{H^3}(x)}{P_{H^2}(x)} & \text{if $x \in \mathbb{R}^d \setminus \mathrm{ker}H$},\\
        0 & \text{if $x \in \mathrm{ker}H$}.
    \end{cases}
\end{equation*}
 We note that $\mathrm{ker}H$ is of measure zero and $f_H \in L^{\infty}(\mathbb{R}^d)$ due to the inequality $|x^{\top}H^3x| \leq \|H\| x^{\top}H^2x$.
 Since $P_{H^3}$ and $P_{H^2}$ are both homogeneous of degree two,
$f_H$ is homogeneous of degree zero. We use the same notation for
its restriction to $\mathbb{S}^{d-1}$.

\begin{lemma}\label{lem-distribution}
Let $H \in \mathrm{Sym}_d \setminus \{0\}$ with $\mathrm{tr}H=0$. Then
\begin{equation*}
    f_H=-\operatorname{div}(DP_H\log|DP_H|)\quad\text{in }B_1
\end{equation*}
in the sense of distributions. In particular, if $q\in(1,2)$ and $w_H\in W_0^{1,q}(B_1)$ is the solution of
\begin{equation*}
    -\Delta w_H=\mathrm{div}(DP_H\log|DP_H|) \quad \text{in $B_1$},
\end{equation*}
then $\Delta w_H=f_H$ in $B_1$ in the sense of distributions.
\end{lemma}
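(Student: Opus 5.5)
The plan is to compute $\operatorname{div}(DP_H\log|DP_H|)$ pointwise on the open set $\mathbb{R}^d\setminus\ker H$, where everything is smooth. Then we justify that no singular part is concentrated on $\ker H$, using integrability and a cutoff argument.

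\textbf{Pointwise computation.} We have $DP_H(x)=2Hx$, so $|DP_H|^2=4P_{H^2}(x)$ and $\log|DP_H|=\log 2+\tfrac12\log P_{H^2}$. Off $\ker H$,
\begin{equation*}
\operatorname{div}(2Hx\log|2Hx|)=2\operatorname{tr}H\,\log|2Hx|+2Hx\cdot\frac{D P_{H^2}(x)}{2P_{H^2}(x)}=\frac{2Hx\cdot H^2x}{P_{H^2}(x)}=2\frac{P_{H^3}(x)}{P_{H^2}(x)},
\end{equation*}
where we used $\operatorname{tr}H=0$ and $DP_{H^2}=2H^2x$. Hence the classical divergence equals $-f_H$ off $\ker H$.

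\textbf{Distributional identity.} Set $F=DP_H\log|DP_H|$. Since $|F(x)|\le C|x|(1+|\log|Hx||)$, we have $F\in L^q_{\mathrm{loc}}$ for every $q<\infty$. The logarithmic singularity is integrable near the subspace $\ker H$ because $|Hx|\ge\lambda\,\mathrm{dist}(x,\ker H)$, with $\lambda$ the smallest nonzero $|$eigenvalue$|$. Moreover $f_H\in L^\infty$.

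For $\varphi\in C_c^\infty(B_1)$, let $V=\ker H$ of codimension $m\ge1$ (note $m\ge2$ since $H\neq0$ is trace-free, but $m\ge1$ suffices). Take a cutoff $\eta_\delta$ equal to $0$ on $\{\mathrm{dist}(x,V)<\delta\}$, equal to $1$ on $\{\mathrm{dist}(x,V)>2\delta\}$, with $|D\eta_\delta|\le C/\delta$. Integrating by parts on the smooth region gives
\begin{equation*}
-\int F\cdot D(\eta_\delta\varphi)=-\int\eta_\delta\varphi f_H .
\end{equation*}
The error term satisfies
\begin{equation*}
\Big|\int \varphi F\cdot D\eta_\delta\Big|\le \frac{C}{\delta}\int_{\{\delta<\mathrm{dist}(x,V)<2\delta\}\cap B_1}\delta\,(1+|\log\delta|)\,dx\le C\delta^{m}(1+|\log\delta|)\to0,
\end{equation*}
because $|F|\le C\,\mathrm{dist}(x,V)(1+|\log \mathrm{dist}(x,V)|)$ there. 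By dominated convergence, we obtain $\langle \operatorname{div}F,\varphi\rangle=-\int F\cdot D\varphi=-\int f_H\varphi$, that is, $f_H=-\operatorname{div}F$ in $\mathcal D'$.

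\textbf{Consequence for $w_H$.} The second claim follows immediately: $-\Delta w_H=\operatorname{div}F=-f_H$ distributionally, so $\Delta w_H=f_H$.

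The only delicate step is the cutoff estimate near $\ker H$. It works because $F$ vanishes linearly, up to a logarithm, on $\ker H$, which kills the $1/\delta$ gradient of the cutoff.
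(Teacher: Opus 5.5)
Your proof is correct, but it takes a different route from the paper. The paper never excises $\ker H$. Instead it regularizes the logarithm, setting $F_{H,\delta}=\tfrac12 DP_H\log(|DP_H|^2+\delta^2)$. This field is smooth on all of $B_1$, with $\operatorname{div}F_{H,\delta}=8x^\top H^3x/(4x^\top H^2x+\delta^2)$. That divergence is bounded by $2\|H\|$ uniformly in $\delta$, because $|x^\top H^3x|\le\|H\|\,x^\top H^2x$, and it converges pointwise to $-f_H$ off $\ker H$. Dominated convergence in the integration-by-parts identity then gives the claim.

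The paper's approach uses no geometry of $\ker H$: no lower bound $|Hx|\ge\lambda\operatorname{dist}(x,\ker H)$ and no volume estimate for a tube. Its pointwise computation is built into the regularized divergence. It is also the device the paper reuses to justify the spherical integration by parts in Lemma~\ref{lem-mu}.

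Your cutoff argument is the generic removable-singularity route. It shows explicitly that no singular part of $\operatorname{div}F$ sits on $\ker H$. It would work for any field that is smooth off a subspace and vanishes there at a suitable rate.

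You can simplify your error estimate. Since $t|\log t|$ is bounded on bounded intervals, $F\in L^\infty(B_1)$, not merely $L^q_{\mathrm{loc}}$. Since $\operatorname{codim}\ker H=\operatorname{rank}H\ge2$, the layer term is then already $O(\delta^{\operatorname{rank}H-1})$ without the logarithmic refinement. Your sharper bound is only needed to cover codimension one, which cannot occur here.
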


\begin{proof}
    For $\delta>0$, we let
    \begin{equation*}
        F_{H, \delta} \coloneqq \frac{1}{2}DP_H \log(|DP_H|^2+\delta^2),
    \end{equation*}
    which converges to $DP_H \log|DP_H|$ when $\delta \to 0$. Since $DP_H=2Hx$ and $\mathrm{tr}H=0$, we have for $x \notin \mathrm{ker}H$,
    \begin{equation*}
        \mathrm{div} F_{H, \delta}=\frac{8x^{\top}H^3x}{4x^{\top}H^2x+\delta^2} \to 2\frac{P_{H^3}}{P_{H^2}}=-f_H \quad \text{when $\delta \to 0$}.
    \end{equation*}
    Thus, for any $\varphi \in C_c^{\infty}(B_1)$, 
    \begin{equation*}
        -\int_{B_1}F_{H, \delta} \cdot D\varphi\,\mathrm{d}x=\int_{B_1}\mathrm{div}F_{H, \delta} \varphi\,\mathrm{d}x=\int_{B_1}\frac{8x^{\top}H^3x}{4x^{\top}H^2x+\delta^2} \varphi\,\mathrm{d}x.
    \end{equation*}
    By letting $\delta \to 0$, the dominated convergence theorem yields that
    \begin{equation*}
        -\int_{B_1} DP_H\log|DP_H| \cdot D\varphi\,\mathrm{d}x=-\int_{B_1}f_H \varphi\,\mathrm{d}x
    \end{equation*}
    as desired.
\end{proof}

We point out that Lemma~\ref{lem-distribution} yields higher regularity for $w_H \in W_0^{1,q}(B_1)$. In fact, due to the global $W^{2,s}$ estimates for the Dirichlet problem with bounded right-hand side, we have 
\begin{equation*}
    w_H \in W^{2, s}(B_1) \cap W_0^{1,s}(B_1) \quad \text{for every $s \in (1, \infty)$}.
\end{equation*}
In particular, $\mathcal{E}_1(w_H)$ and $\mathcal{B}_1(P_H, w_H)$ are well defined.\\

We briefly recall the spherical harmonic decomposition. For an integer $k \geq 0$, the \emph{space of spherical harmonics of degree $k$} is defined by
\begin{equation*}
     \mathcal{H}_k=\mathcal{H}_k(\mathbb{S}^{d-1}) \coloneqq \{Y=\phi|_{\mathbb{S}^{d-1}} : \text{$\phi$ is a homogeneous harmonic polynomial of degree $k$}\}.
\end{equation*}
Equivalently, $\mathcal{H}_k(\mathbb{S}^{d-1})$ is the eigenspace consisting of eigenfunctions $Y$ satisfying
\begin{equation*}
    -\Delta_{\mathbb{S}^{d-1}}Y=k(k+d-2)Y,
\end{equation*}
where $\Delta_{\mathbb{S}^{d-1}}$ is the Laplace--Beltrami operator. The eigenspaces are mutually orthogonal in $L^2(\mathbb{S}^{d-1})$ and give the Hilbert decomposition
\begin{equation*}
    L^2(\mathbb{S}^{d-1})=\bigoplus_{k=0}^{\infty} \mathcal{H}_k(\mathbb{S}^{d-1}),
\end{equation*}
equipped with the normalized inner product
\begin{equation*}
    \langle f, g \rangle \coloneqq \fint_{\mathbb{S}^{d-1}}fg.
\end{equation*}
If $\{Y_{k, m}\}_{m=1}^{N_k}$ is an orthonormal basis of $\mathcal{H}_k$, then the \emph{orthogonal projection} is given by
\begin{equation*}
    \Pi_{k}f \coloneqq \sum_{m=1}^{N_k} \langle f, Y_{k, m} \rangle \, Y_{k, m} \quad \text{for $N_k\coloneqq\mathrm{dim}\mathcal{H}_k$}.
\end{equation*}
For functions defined in $B_1$, we understand $\Pi_k$ to act on the angular variable $\omega$ for each fixed radius $r$.\\

We now restrict our attention to the case $k=2$, where we have the concrete characterization
\begin{equation*}
    \mathcal{H}_2(\mathbb{S}^{d-1})=\{P_H : \text{$H \in \mathrm{Sym}_d$ and $\mathrm{tr}H=0$} \}.
\end{equation*}
By writing $x=r\omega$ with $r=|x|>0$ and $\omega \in \mathbb{S}^{d-1}$, we observe that
\begin{equation}\label{eq-spherical-harmonic}
    \Delta (r^mY(\omega))=[m(m+d-2)-2d]r^{m-2}Y(\omega) \quad \text{for $Y \in \mathcal{H}_2$ and $m \in \mathbb{R}$}.
\end{equation}
By letting $m=2$, we obtain that $r^2Y$ is harmonic. Moreover, a differentiation of \eqref{eq-spherical-harmonic} with respect to $m$ at $m=2$ gives the identity
\begin{equation}\label{eq-log-identity}
    \Delta (r^2\log rY(\omega))=(d+2) Y(\omega).
\end{equation}
Finally, we define $\mu(H)\in\mathbb{R}$ by decomposing $\Pi_2f_H \in \mathcal{H}_2$ into the direction of $P_H$ and its orthogonal complement as
\begin{equation}\label{decomposition-F}
    \Pi_2f_H=(d+2)(\mu(H)P_H+\psi_H)
\end{equation}
where $\psi_H\in\mathcal H_2$ and $\langle\psi_H,P_H\rangle=0$. Then the coefficient $\mu(H)$ is a key quantity that describes the first variation of the gradient excess as follows.

\begin{lemma}\label{lem-identity}
     Let $H \in \mathrm{Sym}_d \setminus \{0\}$ with $\mathrm{tr}H=0$. Suppose that $w_H \in H_0^{1}(B_1)$ is the solution of $\Delta w_H=f_H$ in $B_1$. Then we have
    \begin{equation}\label{eq-identity-Q}
        \mathcal{B}_{\rho}(P_H, w_H)=\mu(H)\rho^2\log\rho (\mathcal{E}_1(P_H))^2 \quad \text{for every $\rho \in (0,1)$.}
    \end{equation}
\end{lemma}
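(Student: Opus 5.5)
Since $DP_H=2Hx$ is linear, $(DP_H)_{B_\rho}=0$, so $\mathcal{B}_\rho(P_H,w_H)=\fint_{B_\rho}DP_H\cdot Dw_H$ (the subtracted mean of $Dw_H$ pairs against a zero-mean field). The plan is to convert this volume integral into a boundary integral and to determine the relevant spherical-harmonic component of $w_H$ explicitly. Because $P_H$ is harmonic, integration by parts gives
\begin{equation*}
\int_{B_\rho}DP_H\cdot Dw_H=\int_{\partial B_\rho}w_H\,\partial_n P_H=\frac{2}{\rho}\int_{\partial B_\rho}w_H P_H .
\end{equation*}
This uses $w_H\in W^{2,s}$, which the paper established above. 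Only the projection of $w_H(\rho\,\cdot)$ onto $P_H\in\mathcal{H}_2$ therefore matters, and the task reduces to computing the radial coefficient $a(r)\coloneqq\langle w_H(r\,\cdot),P_H\rangle/\langle P_H,P_H\rangle$.

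Next, compute $a(r)$ by solving the radial ODE. Write $\Pi_2$ of the equation $\Delta w_H=f_H$. Since $f_H$ is homogeneous of degree zero, $\Pi_2 f_H$ is independent of $r$, and by \eqref{decomposition-F} its $P_H$-coefficient is $(d+2)\mu(H)$. Projecting commutes with the radial part of the Laplacian, so
\begin{equation*}
a''+\frac{d-1}{r}a'-\frac{2d}{r^2}a=(d+2)\mu(H),\qquad a(1)=0 .
\end{equation*}
By \eqref{eq-log-identity}, $\mu(H)r^2\log r$ is a particular solution. The homogeneous solutions are $r^2$ and $r^{-d}$. Regularity at the origin excludes $r^{-d}$: $w_H\in W^{2,s}$ forces $a(r)=O(r^{2-d/s})$ or better, and taking $s$ large rules out $r^{-d}$. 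The boundary condition $a(1)=0$ kills $r^2$, since $w_H\in W^{1,s}_0$ and $\log 1=0$. Hence $a(r)=\mu(H)r^2\log r$.

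To make this rigorous, pair $w_H(r\omega)$ with $P_H(\omega)$ and use that $\Delta_{\mathbb{S}^{d-1}}$ is self-adjoint. This yields the ODE in the distributional sense on $(0,1)$, with the regularity inherited from $W^{2,s}$. I expect the justification that no $r^{-d}$ term appears to be the main technical point. The cleanest route is to note that $a$ is bounded near $0$, because $w_H$ is continuous.

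Finally, assemble the pieces. Using $\fint_{\partial B_\rho}w_HP_H=a(\rho)\rho^2\langle P_H,P_H\rangle$,
\begin{equation*}
\mathcal{B}_\rho(P_H,w_H)=\frac{|\partial B_\rho|}{|B_\rho|}\cdot\frac{2}{\rho}\cdot\rho^2 a(\rho)\langle P_H,P_H\rangle=\frac{2d}{\rho^2}\,\mu(H)\rho^4\log\rho\,\langle P_H,P_H\rangle .
\end{equation*}
The same computation applied to $P_H$ itself, with $a\equiv1$ at $\rho=1$, gives
\begin{equation*}
(\mathcal{E}_1(P_H))^2=\fint_{B_1}|DP_H|^2=2d\,\langle P_H,P_H\rangle .
\end{equation*}
Comparing the two displays yields \eqref{eq-identity-Q}.
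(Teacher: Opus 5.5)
Your proposal is correct and follows essentially the paper's route: in both, the key step is identifying the $P_H$-component of the degree-two spherical harmonic part of $w_H$ as $\mu(H)|x|^2\log|x|$, using \eqref{eq-log-identity} together with the Dirichlet condition. The differences are only in mechanics. You solve a scalar radial ODE, with boundedness at the origin excluding $r^{-d}$, where the paper uses $H^1_0$-uniqueness for $\Pi_2 w_H$. You also use Green's formula with $\partial_n P_H = 2P_H/\rho$ where the paper uses scaling plus $\mathcal{B}_1(P_H,w_H)=0$, and your boundary-integral step makes explicit why only that one component enters $\fint_{B_\rho}DP_H\cdot Dw_H$.
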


\begin{proof}
    We first claim that $w_H$ satisfies
     \begin{equation*}
        \Pi_2w_H=(\mu(H)P_H(x)+\Psi_H(x))\log|x|,
    \end{equation*}
    where  
\begin{equation*}
    \Psi_H (x)\coloneqq |x|^2\psi_H\left(\frac{x}{|x|}\right)
\end{equation*}
is a homogeneous harmonic polynomial of degree two, and is orthogonal to $P_H$. Indeed, for smooth functions $\varphi$, it is immediate to check that
\begin{equation*}
    \Delta(\Pi_2\varphi)=\Pi_2(\Delta\varphi).
\end{equation*}
By standard approximation, this identity remains valid in the sense of
distributions:
\begin{equation*}
    \Delta(\Pi_2w_H)=\Pi_2(\Delta w_H)=\Pi_2f_H=(d+2)(\mu(H)P_H+\psi_H).
\end{equation*}
On the other hand, for
\begin{equation*}
    \tilde w \coloneqq (\mu(H)P_H(x)+\Psi_H(x))\log|x|,
\end{equation*}
we use the identity \eqref{eq-log-identity} to obtain
\begin{equation*}
    \Delta \tilde w=(d+2)(\mu(H)P_H+\psi_H).
\end{equation*}
Since $\Pi_2w_H, \tilde w \in H_0^1(B_1)$ and $\Delta (\Pi_2w_H-\tilde w)=0$, we prove the desired claim.\\

We now prove \eqref{eq-identity-Q}. It is easy to check that $(DP_H)_{B_{\rho}}=0$ and $(Dw_H)_{B_{\rho}}=0$ from the symmetry. In particular, we have
    \begin{equation*}
        \mathcal{B}_{\rho}(P_H, w_H)=\fint_{B_{\rho}}DP_H \cdot Dw_H=\mu(H) \fint_{B_{\rho}}DP_H \cdot D(P_H\log|x|),
    \end{equation*}
    where we used the orthogonality of spherical harmonics with different degrees, $\langle P_H, \psi_H \rangle=0$, and the decomposition of $\Pi_2w_H$. Moreover, the homogeneity gives that for $x=\rho y$,
    \begin{equation*}
        \begin{aligned}
            DP_H(\rho y)&=\rho DP_H(y),\\
            D(P_H\log|x|)(\rho y)&=\rho D(P_H\log|y|)(y)+\rho \log \rho DP_H(y).
        \end{aligned}
    \end{equation*}
    Thus, a direct calculation shows that
    \begin{equation*}
        \begin{aligned}
              \mu(H)\fint_{B_{\rho}}DP_H \cdot D(P_H\log|x|)&=\frac{\mu(H)}{|B_1|} \int_{B_1} DP_H(\rho y) \cdot D(P_H\log|x|)(\rho y)\,\mathrm{d}y\\
              &=\mu(H)\rho^2 \fint_{B_1} DP_H \cdot D(P_H\log |y|)+\mu(H)\rho^2\log\rho \fint_{B_1}|DP_H|^2\\
              &=\rho^2 \mathcal{B}_1(P_H, w_H)+\mu(H)\rho^2\log\rho (\mathcal{E}_1(P_H))^2.
        \end{aligned}
    \end{equation*}
    Since 
    \begin{equation*}
        \mathcal{B}_1(P_H, w_H)=\fint_{B_1} DP_H \cdot Dw_H=-\fint_{B_1} \Delta P_H w_H=0,
    \end{equation*}
    the identity \eqref{eq-identity-Q} is verified.
\end{proof}

\begin{lemma}\label{lem-mu}
    Let $H \in \mathrm{Sym}_d \setminus \{0\}$ satisfy $\mathrm{tr}H=0$, and let $\mu(H)$ be defined by \eqref{decomposition-F}. Then we have
    \begin{equation*}
             \mu(H)=-\frac{d}{\mathrm{tr}(H^2)}\fint_{\mathbb{S}^{d-1}}\frac{P_HP_{H^3}}{P_{H^2}}=-\frac{2}{\mathrm{tr}(H^2)}\fint_{\mathbb{S}^{d-1}}\frac{P_{H^4}P_{H^2}-P^2_{H^3}}{P^2_{H^2}}.
    \end{equation*}
\end{lemma}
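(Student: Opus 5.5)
The first formula comes from reading off the $P_H$-coefficient of $\Pi_2 f_H$ in \eqref{decomposition-F}. The second comes from moving one factor of $P_H$ onto the quotient via an integration by parts on the sphere.

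\emph{First identity.} Since $\Pi_2$ is the orthogonal projection onto $\mathcal{H}_2$, $P_H\in\mathcal H_2$, and $\langle\psi_H,P_H\rangle=0$, pairing \eqref{decomposition-F} with $P_H$ gives
\begin{equation*}
(d+2)\mu(H)\langle P_H,P_H\rangle=\langle \Pi_2 f_H,P_H\rangle=\langle f_H,P_H\rangle=-2\fint_{\mathbb S^{d-1}}\frac{P_HP_{H^3}}{P_{H^2}}.
\end{equation*}
It then suffices to compute $\fint_{\mathbb S^{d-1}}P_H^2$. I would use the standard moment formula $\fint_{\mathbb S^{d-1}}\omega_i\omega_j\omega_k\omega_l=\frac{\delta_{ij}\delta_{kl}+\delta_{ik}\delta_{jl}+\delta_{il}\delta_{jk}}{d(d+2)}$. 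Together with $\operatorname{tr}H=0$, this yields $\fint P_H^2=\frac{2\operatorname{tr}(H^2)}{d(d+2)}$. Dividing through gives
\[
\mu(H)=-\frac{d}{\operatorname{tr}(H^2)}\fint\frac{P_HP_{H^3}}{P_{H^2}}.
\]

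\emph{Second identity.} I would write $g\coloneqq P_{H^3}/P_{H^2}$, which is homogeneous of degree zero and bounded, and use $\Delta(r^2Y)$-type identities. Since $P_H$ is a degree-2 spherical harmonic, $-\Delta_{\mathbb S^{d-1}}P_H=2d\,P_H$. Hence
\[
\fint P_H g=\frac{1}{2d}\fint \nabla_{\mathbb S}P_H\cdot\nabla_{\mathbb S}g .
\]
Alternatively, one can work in $\mathbb R^d$: $\int_{B_1}DP_H\cdot Dg=\int_{\partial B_1}g\,\partial_nP_H-\int_{B_1}g\Delta P_H=2\int_{\partial B_1}gP_H$. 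Then, since $Dg\cdot x=0$, one can compute $DP_H\cdot Dg$ pointwise with $DP_H=2Hx$:
\[
Dg=\frac{2H^3x}{P_{H^2}}-\frac{2P_{H^3}H^2x}{P_{H^2}^2},\qquad 2Hx\cdot Dg=4\,\frac{P_{H^4}P_{H^2}-P_{H^3}^2}{P_{H^2}^2}.
\]
This quantity is homogeneous of degree zero. So $\int_{B_1}(\cdot)=\frac1d|\partial B_1|\fint_{\mathbb S}(\cdot)$, which converts the ball integral to a sphere average with factor $|B_1|/|\partial B_1|=1/d$. Combining gives $\fint P_Hg=\frac{2}{d}\fint\frac{P_{H^4}P_{H^2}-P_{H^3}^2}{P_{H^2}^2}$, and substituting into the first identity produces the second.

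\emph{Main obstacle.} The main obstacle is justifying the integration by parts, because $g$ is singular on $\ker H$: it is bounded but not Lipschitz there, and $Dg$ may blow up like $1/|Hx|$. I would handle this as in Lemma~\ref{lem-distribution}. Replace $P_{H^2}$ by $P_{H^2}+\delta^2$ (for instance $g_\delta=P_{H^3}/(P_{H^2}+\delta^2|x|^2)$, keeping homogeneity zero), perform the computation for the smooth $g_\delta$, and pass to the limit $\delta\to0$ by dominated convergence. The required domination comes from $|H^3x|\le\|H\||Hx|$, which shows that $2Hx\cdot Dg_\delta$ stays uniformly bounded. The remaining bookkeeping of the normalization constants, both the $d(d+2)$ moment constant and the factor $1/d$ from homogeneity, is routine.
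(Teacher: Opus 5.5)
Your proposal is correct and follows essentially the same route as the paper. The first formula comes from pairing \eqref{decomposition-F} with $P_H$ and applying the fourth-moment identity. The second comes from integrating by parts on $\mathbb{S}^{d-1}$ against $\tfrac12\nabla_{\mathbb{S}^{d-1}}P_H$, which is exactly the paper's tangent field $X=H\omega-P_H\omega$ with $\operatorname{div}_{\mathbb{S}^{d-1}}X=-dP_H$. Your homogeneous regularization $g_\delta$ supplies the justification near $\ker H$ that the paper omits; the domination should be stated as $|x^{\top}H^3x|\le\|H\|\,x^{\top}H^2x$ and $|H^2x|\le\|H\|\,|Hx|$, since $|H^3x|\le\|H\|\,|Hx|$ as you wrote is not correct.
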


\begin{proof}
    We take the inner product of \eqref{decomposition-F} with $P_H$ to find that
\begin{equation*}
    \mu(H)=\frac{\langle f_H, P_H \rangle}{(d+2)\langle P_H, P_H \rangle}=-\frac{2}{(d+2)\langle P_H, P_H \rangle}\fint_{\mathbb{S}^{d-1}} \frac{P_HP_{H^3}}{P_{H^2}}.
\end{equation*}
Then we utilize the well-known algebraic identity on $\mathbb{S}^{d-1}$:
\begin{equation*}
\fint_{\mathbb S^{d-1}}
(\omega^{\top}A\omega)(\omega^{\top}B\omega)\,\mathrm{d}\omega=\frac{\mathrm{tr}A\,\mathrm{tr}B+2\mathrm{tr}(AB)}{d(d+2)} \quad \text{for $A,B \in \mathrm{Sym}_d$}.
\end{equation*}
 By putting $A=B=H$ and using $\mathrm{tr}H=0$, we have
\begin{equation*}
    \langle P_H, P_H \rangle=\frac{2\mathrm{tr}(H^2)}{d(d+2)},
\end{equation*}
which gives the first equality.

For the second equality, we set
\begin{equation*}
    X(\omega) \coloneqq H\omega-P_H(\omega)\omega \quad \text{for $\omega \in \mathbb{S}^{d-1}$}.
\end{equation*}
It is immediate to check that
\begin{itemize}
    \item $X(\omega) \cdot \omega=0$, i.e., $X$ is tangent to $\mathbb{S}^{d-1}$;

    \item $\nabla_{\mathbb{R}^d}P_H=2H\omega$ and so $\nabla_{\mathbb{S}^{d-1}}P_H=(I-\omega \otimes \omega)\nabla_{\mathbb{R}^d}P_H=2X$;

    \item since $P_H \in \mathcal{H}_2(\mathbb{S}^{d-1})$, we have
    \begin{equation*}
        \mathrm{div}_{\mathbb{S}^{d-1}}X=\frac{1}{2}\Delta_{\mathbb{S}^{d-1}}P_H=-dP_H;
    \end{equation*}

    \item $X \cdot \nabla_{\mathbb{S}^{d-1}}P_{H^2}=2(P_{H^3}-P_HP_{H^2})$ and $X \cdot \nabla_{\mathbb{S}^{d-1}}P_{H^3}=2(P_{H^4}-P_HP_{H^3})$.
\end{itemize}
Therefore, we integrate by parts to obtain that
\begin{equation*}
    \begin{aligned}
        -d\fint_{\mathbb{S}^{d-1}} \frac{P_HP_{H^3}}{P_{H^2}}&=\fint_{\mathbb{S}^{d-1}} \frac{P_{H^3}}{P_{H^2}} \mathrm{div}_{\mathbb{S}^{d-1}}X=-\fint_{\mathbb{S}^{d-1}} \nabla_{\mathbb{S}^{d-1}}\left(\frac{P_{H^3}}{P_{H^2}}  \right) \cdot X\\
        &=-2\fint_{\mathbb{S}^{d-1}}\frac{P_{H^4}P_{H^2}-P^2_{H^3}}{P^2_{H^2}}.
    \end{aligned}
\end{equation*}
To justify the calculation near $\mathrm{ker}H$, we may repeat the regularization argument as in Lemma~\ref{lem-distribution}; we omit the details.
\end{proof}

We finish this section by establishing a \emph{uniform gap} above $-1$. Here uniformity means that the size of the gap between $-1$ and $\mu(H)$ is independent of the nonzero trace-free matrix $H$. This is important since the quadratic profile $P_H$ arising from the compactness argument is not fixed in advance. We also note that Proposition~\ref{prop-uniform-gap} below suffices to prove Theorem~\ref{cor-conjecture} with the aid of \cite[Theorem~2]{ATU18}. To prove Theorem~\ref{thm-main-p-harmonic} for the full range $c \in (0, 1/2)$, however, we need the sharp lower bound $\mu(H) \geq -1/2$. The proof of this sharper estimate is more algebraic, and we postpone it to Appendix~\ref{sec-appendix}; see Proposition~\ref{prop:mu-lb}.

\begin{proposition}[Uniform gap]\label{prop-uniform-gap}
    There exists a constant $\delta_d>0$, depending only on $d$, such that 
    \begin{equation*}
         \mu(H) \geq -1+\delta_d
    \end{equation*}
    for every $H \in \mathrm{Sym}_d \setminus \{0\}$ satisfying $\mathrm{tr}H=0$.
\end{proposition}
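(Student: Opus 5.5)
The plan is to work from the second expression for $\mu(H)$ in Lemma~\ref{lem-mu}, read its integrand as a variance, bound it pointwise, and then use compactness to make the gap uniform.

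\emph{Homogeneity and continuity.} The quantity $\mu(H)$ is invariant under $H\mapsto tH$ for $t>0$: both $P_{H^4}P_{H^2}-P_{H^3}^2$ and $\mathrm{tr}(H^2)P_{H^2}^2$ scale like $t^6$. So it suffices to consider the compact set
\begin{equation*}
K\coloneqq\{H\in\mathrm{Sym}_d:\mathrm{tr}H=0,\ \mathrm{tr}(H^2)=1\}.
\end{equation*}
Let $g_H(\omega)\coloneqq (P_{H^4}P_{H^2}-P_{H^3}^2)/P_{H^2}^2$.
\begin{itemize}
\item Outside $\ker H$, the integrand $g_H$ is jointly continuous in $(H,\omega)$.
\item It satisfies $0\le g_H\le P_{H^4}/P_{H^2}\le \|H\|^2\le 1$ on $K$.
\item If $H_n\to H$ in $K$, then $g_{H_n}(\omega)\to g_H(\omega)$ for every $\omega\notin\ker H$, and $\ker H\cap\mathbb S^{d-1}$ is a null set.
\end{itemize}
By dominated convergence, $\mu$ is continuous on $K$ and attains its infimum. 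Hence it suffices to prove $\mu(H)>-1$ for each fixed $H\in K$, i.e.
\begin{equation*}
\fint_{\mathbb S^{d-1}} g_H<\tfrac12\,\mathrm{tr}(H^2).
\end{equation*}

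\emph{Pointwise variance bound.} Diagonalize $H$ with eigenvalues $\lambda_1,\dots,\lambda_d$, so that $P_{H^k}(\omega)=\sum_i\lambda_i^k\omega_i^2$. For $\omega\notin\ker H$, define the probability weights $\pi_i(\omega)=\lambda_i^2\omega_i^2/P_{H^2}(\omega)$ on the eigenvalues. Then
\begin{equation*}
g_H(\omega)=\mathbb E_\pi[\lambda^2]-(\mathbb E_\pi[\lambda])^2=\mathrm{Var}_\pi(\lambda).
\end{equation*}
By Popoviciu's inequality, $\mathrm{Var}_\pi(\lambda)\le(\lambda_{\max}-\lambda_{\min})^2/4$. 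Moreover, $(\lambda_{\max}-\lambda_{\min})^2\le 2(\lambda_{\max}^2+\lambda_{\min}^2)\le 2\,\mathrm{tr}(H^2)$, using $\lambda_{\max}>0>\lambda_{\min}$ since $\mathrm{tr}H=0$ and $H\ne0$. Hence $g_H\le \mathrm{tr}(H^2)/2$ pointwise, which already gives $\mu(H)\ge-1$.

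\emph{Strictness.} Since $g_H$ is continuous off a null set, it suffices to show that equality $g_H(\omega)=\mathrm{tr}(H^2)/2$ holds only on a null set of $\omega$. Equality throughout the chain would force three things:
\begin{itemize}
\item $\lambda_{\max}=-\lambda_{\min}$ and all other eigenvalues vanish;
\item $\pi$ is supported on the eigenspaces of $\lambda_{\max}$ and $\lambda_{\min}$;
\item $\pi$ puts mass exactly $1/2$ on each of them.
\end{itemize}
The last condition says $\sum_{\lambda_i=\lambda_{\max}}\omega_i^2=\sum_{\lambda_i=\lambda_{\min}}\omega_i^2$. This is a nontrivial quadratic equation in $\omega$, so it cuts out a null subset of $\mathbb S^{d-1}$. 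Therefore $\fint g_H<\mathrm{tr}(H^2)/2$, so $\mu(H)>-1$ for every $H\in K$. By continuity and compactness, $\delta_d\coloneqq 1+\min_K\mu>0$ depends only on $d$.

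\emph{Main obstacle.} The step needing the most care is the continuity of $\mu$ on $K$. In particular, the singular set $\ker H$ varies with $H$, and the integrand need not converge uniformly near it. The uniform bound $0\le g_H\le 1$ on $K$, together with pointwise convergence off the fixed null set $\ker H\cap\mathbb S^{d-1}$ for the limit $H$, is what makes dominated convergence go through.
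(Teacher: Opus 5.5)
Your proposal is correct and follows essentially the same route as the paper. Both arguments use the variance representation with weights $\lambda_i^2\omega_i^2/P_{H^2}$, the pointwise bound $V\le(\lambda_{\max}-\lambda_{\min})^2/4\le \mathrm{tr}(H^2)/2$, the equality analysis reducing to the null set where the squared coordinates on the two extreme eigenspaces balance, and compactness on $\{\mathrm{tr}H=0,\ \mathrm{tr}(H^2)=1\}$. Your dominated-convergence argument for the continuity of $\mu$ on that set supplies a step the paper only asserts.
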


\begin{proof}
    Since $H$ is symmetric and $H \mapsto \mu(H)$ is invariant under orthogonal change of coordinates, we may assume that 
    \begin{equation*}
        H=\mathrm{diag}(\lambda_1, \ldots, \lambda_d).
    \end{equation*}
    For every $\omega \in \mathbb{S}^{d-1}$ such that $P_{H^2}(\omega)>0$, define
    \begin{equation*}
        \nu_i(\omega) \coloneqq \frac{\lambda_i^2\omega_i^2}{P_{H^2}(\omega)}=\frac{\lambda_i^2\omega_i^2}{\sum_{j=1}^d\lambda_j^2\omega_j^2}.
    \end{equation*}
    Then it is easy to check that $\nu_i \geq 0$, $\sum_i\nu_i=1$, and
    \begin{equation*}
        \frac{P_{H^3}}{P_{H^2}}=\sum_{i=1}^d\nu_i\lambda_i, \quad \frac{P_{H^4}}{P_{H^2}}=\sum_{i=1}^d\nu_i\lambda_i^2.
    \end{equation*}
    Thus, we may write
    \begin{equation*}
        \frac{P_{H^4}P_{H^2}-P^2_{H^3}}{P^2_{H^2}}=\sum_{i=1}^d\nu_i\lambda_i^2-\left(\sum_{i=1}^d\nu_i\lambda_i\right)^2 \eqqcolon V_{\nu}(\lambda),
    \end{equation*}
    and so, in view of Lemma~\ref{lem-mu},
    \begin{equation*}
        \mu(H)=-\frac{2}{\mathrm{tr}(H^2)} \fint_{\mathbb{S}^{d-1}}V_{\nu}(\lambda). 
    \end{equation*}

    We now use this formula for $\mu(H)$ to show the desired uniform gap. To begin with, let 
    \begin{equation*}
        a \coloneqq \max_i \lambda_i, \quad  b \coloneqq \min_i\lambda_i, \quad \text{and} \quad m \coloneqq \sum_{i=1}^d\nu_i\lambda_i.
    \end{equation*}
    Note that $a>0>b$ from $H \neq 0$ and $\mathrm{tr}H=0$. Since $(a-\lambda_i)(\lambda_i-b) \geq 0$, we have
    \begin{equation*}
        \sum_{i=1}^d\nu_i\lambda_i^2 \leq (a+b) \sum_{i=1}^d \nu_i\lambda_i-ab \sum_{i=1}^d\nu_i=(a+b)m-ab.
    \end{equation*}
    Therefore, we conclude that
    \begin{equation*}
        V_{\nu}(\lambda) \leq (a-m)(m-b) \leq \frac{(a-b)^2}{4} \leq  \frac{a^2+b^2}{2} \leq \frac{\mathrm{tr}(H^2)}{2},
    \end{equation*}
   which implies $\mu(H) \geq -1$. In fact, the equality cannot occur; suppose that $\mu(H)=-1$. Then it follows from the equality condition that $a+b=0$ and $a^2+b^2=\mathrm{tr}(H^2)$. In particular, after reordering the coordinates, we have $H=\mathrm{diag}(a, -a, \ldots, 0)$. Since
   \begin{equation*}
       V_{\nu}(\lambda)=a^2(\nu_1+\nu_2)-a^2(\nu_1-\nu_2)^2=4a^2\frac{\omega_1^2\omega_2^2}{(\omega_1^2+\omega_2^2)^2}
   \end{equation*}
    and
    \begin{equation*}
        \frac{\mathrm{tr}(H^2)}{2}=a^2,
    \end{equation*}
    the equality condition again requires that $\omega_1^2=\omega_2^2$ for almost every $\omega \in \mathbb{S}^{d-1}$, which is impossible.

   It only remains to verify the uniform gap between $\mu(H)$ and $-1$. The formula in Lemma~\ref{lem-mu} shows that $\mu(tH)=\mu(H)$ for every $t \neq 0$. Thus, we may assume that $\mathrm{tr}(H^2)=1$ and consider the compact set 
   \begin{equation*}
       K_d \coloneqq \{H \in \mathrm{Sym}_d : \text{$\mathrm{tr}H=0$ and $\mathrm{tr}(H^2)=1$}\}.
   \end{equation*}
    Since $\mu$ is continuous on $K_d$ and $\mu(H)>-1$ for every $H \in K_d$, we obtain
    \begin{equation*}
        \delta_d \coloneqq \min_{H \in K_d} (1+\mu(H))>0,
    \end{equation*}
    which finishes the proof.
\end{proof}

\section{Proof of the main theorems}\label{sec-proof-main}
For $p=2+\varepsilon$ and 
\begin{equation*}
0<c<\hat c \coloneqq \frac{1}{2}\left(c+\frac{1}{2}\right)<\frac{1}{2},    
\end{equation*}
we observe that
\begin{equation*}
    \begin{aligned}
           \beta_{p,c}&=\frac{1}{p-1}+c(p-2)=\frac{1}{1+\varepsilon}+c\varepsilon=1-(1-c)\varepsilon+O(\varepsilon^2),\\
           \beta_{p, \hat c}&=\frac{1}{p-1}+\hat c(p-2)=\frac{1}{1+\varepsilon}+\hat c\varepsilon=1-(1-\hat c)\varepsilon+O(\varepsilon^2).
    \end{aligned}
\end{equation*}
By decreasing the upper bound for $\varepsilon>0$ if necessary, we have
\begin{equation*}
    \frac{1}{p-1} <\beta_{p,c}<\beta_{p, \hat c}<1.
\end{equation*}

\begin{lemma}[Excess decay at critical points]\label{lem-initial-step}
    For every fixed $\rho \in (0, 1/4)$ and $c \in (0, 1/2)$, there exists $\varepsilon_{d, \rho, c}>0$ such that every $(v, \xi)$ satisfying \eqref{eq-normalized1} and \eqref{eq-normalized2}, with $p \in (2, 2+\varepsilon_{d, \rho, c})$, satisfies
    \begin{equation*}
        \mathcal{E}_{\rho}(v) \leq \rho^{\beta_{p, \hat c}}.
    \end{equation*}
\end{lemma}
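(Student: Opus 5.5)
We argue by contradiction. Fix $\rho\in(0,1/4)$ and $c\in(0,1/2)$. Suppose that there are $\varepsilon_j\to0$ and normalized pairs $(v_j,\xi_j)$ with $p_j=2+\varepsilon_j$ such that $\mathcal{E}_\rho(v_j)>\rho^{\beta_{p_j,\hat c}}$. Lemma~\ref{lem-compactness} lets us pass to a subsequence along which $\xi_j\to0$, $v_j\to h$ in $C^1(\overline B_R)$, $h_j\rightharpoonup h$, and $w_j\rightharpoonup w$. Here $h$ is harmonic with $h(0)=Dh(0)=0$, and $w\in W^{1,q}_0(B_1)$ solves $-\Delta w=\mathrm{div}(Dh\log|Dh|)$.

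We first identify $h$. By lower semicontinuity and the normalization, $\mathcal{E}_1(h)\le\liminf\mathcal{E}_1(h_j)\le 1$, since harmonic replacement minimizes the Dirichlet energy and $(Dh_j)_{B_1}=0$. Because $\rho<1$, the $C^1$ convergence on $B_\rho$ gives $\mathcal{E}_\rho(v_j)\to\mathcal{E}_\rho(h)$. Since $\beta_{p_j,\hat c}\to1$, we get $\mathcal{E}_\rho(h)\ge\rho\ge\rho\,\mathcal{E}_1(h)$. Lemma~\ref{lem-harmonic-rigidity} then forces equality throughout, so $\mathcal{E}_1(h)=1$ and $h=P_H$ with $\mathrm{tr}H=0$ and $\mathcal{E}_1(P_H)=1$. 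In particular $h_j\to h$ strongly in $H^1(B_1)$.

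Qualitative compactness gives no contradiction at this point, so we expand to first order in $\varepsilon_j$. On $B_\rho$ write $v_j=h_j+\varepsilon_jw_j$. Then
\begin{equation*}
(\mathcal{E}_\rho(v_j))^2=(\mathcal{E}_\rho(h_j))^2+2\varepsilon_j\mathcal{B}_\rho(h_j,w_j)+\varepsilon_j^2(\mathcal{E}_\rho(w_j))^2 .
\end{equation*}
Since $h_j$ is harmonic with $(Dh_j)_{B_1}=0$, Lemma~\ref{lem-harmonic-rigidity} gives $(\mathcal{E}_\rho(h_j))^2\le\rho^2(\mathcal{E}_1(h_j))^2\le\rho^2(\mathcal{E}_1(v_j))^2=\rho^2$. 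The last term is $O(\varepsilon_j^2)$ by \eqref{eq-compactness-w}. For the middle term, $Dh_j\to DP_H$ strongly in $L^2(B_\rho)$ (indeed locally uniformly) and $Dw_j\rightharpoonup Dw$ weakly in $L^2(B_\rho)$, so $\mathcal{B}_\rho(h_j,w_j)\to\mathcal{B}_\rho(P_H,w)$.

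By Lemma~\ref{lem-distribution}, $w=w_H$. Lemma~\ref{lem-identity} then gives $\mathcal{B}_\rho(P_H,w_H)=\mu(H)\rho^2\log\rho$. Combining these,
\begin{equation*}
(\mathcal{E}_\rho(v_j))^2\le\rho^2\bigl(1+2\varepsilon_j\mu(H)\log\rho\bigr)+o(\varepsilon_j).
\end{equation*}
On the other hand, the contradiction hypothesis says
\begin{equation*}
(\mathcal{E}_\rho(v_j))^2>\rho^{2\beta_{p_j,\hat c}}=\rho^2\bigl(1-2(1-\hat c)\varepsilon_j\log\rho\bigr)+O(\varepsilon_j^2).
\end{equation*}
Subtract the two bounds, divide by $2\varepsilon_j\rho^2\log\rho<0$, and let $j\to\infty$. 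This yields $\mu(H)\le-(1-\hat c)=\hat c-1<-1/2$, which contradicts Proposition~\ref{prop:mu-lb} ($\mu(H)\ge-1/2$). Only the sharp bound works here: Proposition~\ref{prop-uniform-gap} would suffice only when $\hat c<\delta_d$.

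The main obstacle is making the $o(\varepsilon_j)$ bookkeeping rigorous and uniform. Two points need care. First, the inequality $\mathcal{E}_\rho(h_j)\le\rho\,\mathcal{E}_1(h_j)\le\rho$ must be used exactly, with no loss, so that the only first-order contribution comes from the cross term. Second, the cross term must converge to its limit, which requires the weak $H^1(B_\rho)$ convergence of $w_j$ and the identification $w=w_H$ through uniqueness in $W^{1,q}_0$. The constant $O(\varepsilon^2)$ in the Taylor expansion of $\rho^{2\beta}$ depends only on the fixed $\rho$, which is harmless.
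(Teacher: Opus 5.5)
Your proposal is correct and follows the paper's proof essentially step by step. The steps match: contradiction plus Lemma~\ref{lem-compactness}, harmonic rigidity forcing $h=P_H$ with $\mathcal{E}_1(P_H)=1$, the expansion of $(\mathcal{E}_\rho(v_j))^2$ with $(\mathcal{E}_\rho(h_j))^2\le\rho^2$ and the cross term converging to $\mu(H)\rho^2\log\rho$ via Lemma~\ref{lem-identity}, and finally the sharp bound of Proposition~\ref{prop:mu-lb}. The differences are only cosmetic: you use Dirichlet-energy minimality of $h_j$ instead of the exact identity \eqref{eq-energy}, and you pass to the limit to get $\mu(H)\le\hat c-1<-1/2$ rather than inserting $\mu(H)\ge-1/2$ before comparing with the Taylor expansion of $\rho^{2\beta_{p_j,\hat c}}$.
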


\begin{proof}
    Fix $\rho \in (0, 1/4)$ and $c \in (0, 1/2)$. Suppose that the conclusion is false for every choice of $\varepsilon_{d, \rho,c}>0$. Then there exist $p_j=2+\varepsilon_j \to 2$ and normalized pairs $(v_j, \xi_j)$ satisfying \eqref{eq-normalized1} and \eqref{eq-normalized2}, but
    \begin{equation}\label{eq-contradiction}
        \mathcal{E}_{\rho}(v_j) >\rho^{ \beta_{p_j, \hat c}}.
    \end{equation}
    Let $h_j$ be harmonic replacements of $v_j$ and let $w_j=(v_j-h_j)/\varepsilon_j$. By applying Lemma~\ref{lem-compactness}, 
    \begin{equation*}
        \xi_j \to 0, \quad h_j \rightharpoonup h \ \text{in $H^1(B_1)$}, \quad v_j \to h \ \text{in $C^1(\overline B_R)$}, \quad w_j \rightharpoonup w \ \text{in $H^1(B_R)$},
    \end{equation*}
    for every $R \in (0,1)$. Moreover, $w \in W_0^{1,q}(B_1)$ satisfies
      \begin{equation*}
        -\Delta w=\mathrm{div}(Dh\log|Dh|) \quad \text{in $B_1$}.
    \end{equation*}
    
    We recall from \eqref{eq-normalized2} and \eqref{eq-harmonic-replacement} that
    \begin{equation}\label{eq-energy}
        1=(\mathcal{E}_{1}(v_j))^2=(\mathcal{E}_{1}(h_j))^2+\varepsilon_j^2\fint_{B_1}|Dw_j|^2.
    \end{equation}
    Since $h_j \to h$ weakly in $H^1(B_1)$, we use the weak lower semicontinuity to obtain that
    \begin{equation*}
        \mathcal{E}_1(h) \leq \liminf_{j \to \infty}\mathcal{E}_{1}(h_j) \leq 1.
    \end{equation*}
     We then pass to the limit in \eqref{eq-contradiction} and apply Lemma~\ref{lem-harmonic-rigidity} to find
    \begin{equation*}
        \rho \leq  \mathcal{E}_{\rho}(h) \leq \rho \mathcal{E}_1(h) \leq \rho.
    \end{equation*}
    The equality condition in Lemma~\ref{lem-harmonic-rigidity} shows that 
     \begin{equation*}
        h=P_H(x) = x^{\top}Hx \quad \text{for $H \in \mathrm{Sym}_d$, $\mathrm{tr}H=0$, and $\mathcal{E}_{1}(P_H)=1$}.
    \end{equation*}
    In particular, the limit function $w=w_H$ satisfies
    \begin{equation*}
        -\Delta w_H=\mathrm{div}(DP_H\log|DP_H|) \quad \text{in $B_1$}.
    \end{equation*}

    We now use the relation $v_j=h_j+\varepsilon_jw_j$ for each $j$ as follows.
    \begin{equation*}
        \begin{aligned}
             (\mathcal{E}_{\rho}(v_j))^2&=(\mathcal{E}_{\rho}(h_j))^2+2\varepsilon_j\mathcal{B}_{\rho}(h_j, w_j)+\varepsilon_j^2(\mathcal{E}_{\rho}(w_j))^2\\
             &\leq \rho^2+2\varepsilon_j\mathcal{B}_{\rho}(h_j, w_j)+O(\varepsilon_j^2),
        \end{aligned}
    \end{equation*}
    where we used \eqref{eq-energy}, Lemmas~\ref{lem-compactness} and \ref{lem-harmonic-rigidity}. Since $h_j \to h$ strongly in $H^1(B_{\rho})$ and $w_j \rightharpoonup w_H$ weakly in $H^1(B_{\rho})$, we have
    \begin{equation*}
        \mathcal{B}_{\rho}(h_j, w_j) \to \mathcal{B}_{\rho}(P_H, w_H).
    \end{equation*}
    By using Lemma~\ref{lem-identity}, Proposition~\ref{prop:mu-lb}, and $\log \rho<0$, we conclude that
    \begin{equation*}
        \begin{aligned}
            (\mathcal{E}_{\rho}(v_j))^2&\leq \rho^2+2\varepsilon_j \mu(H)\rho^2\log\rho+o(\varepsilon_j)\\
            &\leq \rho^2[1+|\log\rho|\varepsilon_j]+o(\varepsilon_j).
        \end{aligned}
    \end{equation*}
    On the other hand, Taylor's theorem together with the definition of $\beta_{p, \hat c}$ yields that
    \begin{equation*}
        \rho^{2\beta_{p_j, \hat c}}=\rho^2[1+2(1-\beta_{p_j, \hat c})|\log\rho|]+o(\varepsilon_j).
    \end{equation*}
    Since the condition $\hat c<1/2$ guarantees
    \begin{equation*}
        1-\beta_{p_j, \hat c}=(1-\hat c)\varepsilon_j+O(\varepsilon_j^2) >\varepsilon_j/2,
    \end{equation*}
    we have $(\mathcal{E}_{\rho}(v_j))^2 <\rho^{2\beta_{p_j, \hat c}}$ for all sufficiently large $j$. This contradicts the initial condition \eqref{eq-contradiction} and proves the lemma.
\end{proof}

In the remainder of this section, we fix $\rho=1/8$ and then set the constant $\varepsilon_{d,c}=\varepsilon_{d, 1/8, c}$ chosen in Lemma~\ref{lem-initial-step}. In what follows, we decrease $\varepsilon_{d,c}$, if necessary, so that
Lemma~\ref{lem-xi-bound} is applicable for every $p\in(2,2+\varepsilon_{d,c})$.

\begin{lemma}[Growth estimate at critical points]\label{lem-growth-critical}
    For $p \in (2, 2+\varepsilon_{d,c})$ with $c \in (0, 1/2)$, every $p$-harmonic function $u$ in $B_1$ with $Du(0)=0$ satisfies
    \begin{equation*}
        |Du(x)| \leq C\|u\|_{L^{\infty}(B_1)}|x|^{\beta_{p, \hat c}} \quad \text{for $x \in B_{1/4}$},
    \end{equation*}
    and so
    \begin{equation*}
        |u(x)-u(0)| \leq C\|u\|_{L^{\infty}(B_1)}|x|^{1+\beta_{p, \hat c}} \quad \text{for $x \in B_{1/4}$},
    \end{equation*}
    where $C>0$ depends only on $d$, $p$, and $c$.    
\end{lemma}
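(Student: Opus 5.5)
The proof iterates Lemma~\ref{lem-initial-step} with $\rho=1/8$ at the critical point $0$, using the invariance of the $p$-Laplace equation under scaling and subtraction of affine functions. Set $\beta\coloneqq\beta_{p,\hat c}$ and $E_k\coloneqq\mathcal{E}_{\rho^k}(u)$ for $k\ge0$. The first goal is the geometric decay
\begin{equation*}
    E_{k+1}\leq \rho^{\beta}E_k\quad\text{for every $k\geq 0$}.
\end{equation*}
If $E_k=0$, then $Du$ is constant on $B_{\rho^k}$, hence $Du\equiv Du(0)=0$ there and all later excesses vanish. If $E_k>0$, form the normalized pair $(v,\xi)$ with $r=\rho^k$. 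Since $Du(0)=0$, this pair satisfies \eqref{eq-normalized1}--\eqref{eq-normalized2}. A change of variables gives $\mathcal{E}_\rho(v)=E_{k+1}/E_k$, so Lemma~\ref{lem-initial-step} yields the claim. Iterating gives $E_k\le \rho^{k\beta}E_0$.

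Next I would control $E_0$ and the means. The bound $E_0\le C\|u\|_{L^\infty(B_1)}$ follows by applying Lemma~\ref{lem-xi-bound}-type uniform estimates to the rescaling $u/\|u\|_{L^\infty}$. Alternatively, one can use the uniform $C^{1,\gamma_0}$ bound from \cite[Lemma~2.5]{DP06} with $\mu=(p_0-1)^{-1}$ on $B_{7/8}$, which gives $\|Du\|_{L^\infty(B_{3/4})}\le C\|u\|_{L^\infty(B_1)}$, together with an interior Caccioppoli estimate to control $\fint_{B_1}|Du|^2$. Working on $B_{3/4}$ instead of $B_1$ and rescaling avoids any need for $Du\in L^2(B_1)$ up to the boundary.

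The main obstacle is converting excess decay into pointwise decay of $Du$ while using $Du(0)=0$. Write $m_k\coloneqq (Du)_{B_{\rho^k}}$. The standard estimate $|m_{k+1}-m_k|\le \rho^{-d/2}E_k$ makes $m_k$ Cauchy, with $|m_k-m_\infty|\le C\rho^{k\beta}E_0$. Since $Du$ is continuous, $m_\infty=Du(0)=0$, so $|m_k|\le C\rho^{k\beta}E_0$. Hence $\fint_{B_{\rho^k}}|Du|^2\le C\rho^{2k\beta}E_0^2$.

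To get a pointwise bound, I would not rely on $L^2$-Lebesgue points. Instead, for $|x|\sim\rho^{k}$ I would use the uniform $C^{1,\gamma_0}$ (Lipschitz) estimate of Lemma~\ref{lem-xi-bound} applied to the normalized pair at scale $r=\rho^{k-1}$. That estimate controls $\sup_{B_{2r/3}}|Dv+\xi|\le C(1+|\xi|)$ with $|\xi|\le C_d$, so
\begin{equation*}
    \sup_{B_{2\rho^{k-1}/3}}|Du|\le C\bigl(E_{k-1}+|m_{k-1}|\bigr)\le C\rho^{k\beta}\|u\|_{L^\infty(B_1)}.
\end{equation*}
This holds whenever $E_{k-1}>0$, and trivially otherwise. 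Since $\rho^{k}\le|x|<\rho^{k-1}$ and $\rho^{k-1}\cdot 2/3>\rho^{k}$, covering the annuli gives the first estimate on $B_{1/4}$, up to a harmless adjustment of constants for the first scale. The second estimate then follows by integrating $Du$ along the segment from $0$ to $x$.
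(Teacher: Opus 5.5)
Your proposal is correct and follows the paper's proof essentially step by step: geometric excess decay from Lemma~\ref{lem-initial-step} applied to the normalized pairs at scales $\rho^k$, conversion to a pointwise bound on $B_{2r/3}$ via Lemma~\ref{lem-xi-bound}, and control of the initial excess on an interior ball (the paper starts at $r_0=1/2$ and uses H\"older plus Caccioppoli). Note that the interior start is necessary, because $\mathcal{E}_1(u)$ itself cannot be bounded by $\|u\|_{L^\infty(B_1)}$. Two small tidy-ups: the Cauchy argument for the means $m_k$ is redundant, since $|\xi|\le C_d$ already gives $|m_k|\le C_d E_k$; and the annuli should be $\tfrac23\rho^{k}<|x|\le\tfrac23\rho^{k-1}$, as in the paper, so that each $x$ actually lies in $B_{2\rho^{k-1}/3}$.
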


\begin{proof}
    Set $r_0=1/2$ and $r_k=r_0\rho^k$ for $k \in \mathbb{N}$. We first prove the excess decay 
    \begin{equation}\label{eq-excess-induction}
        \mathcal{E}_{r_k}(u) \leq \rho^{k\beta_{p, \hat c}}\mathcal{E}_{r_0}(u) \quad \text{for every $k \geq 0$}.
    \end{equation}
    If $\mathcal{E}_{r_k}(u)=0$, then $Du$ is constant in $B_{r_k}$ and so vanishes in $B_{r_k}$ due to the condition $Du(0)=0$. Thus, the excess vanishes at all smaller scales. 
    
    We next suppose that $\mathcal{E}_{r_k}(u)>0$ and set 
    \begin{equation*}
        a_k  \coloneqq (Du)_{B_{r_k}}, \quad v_k(x) \coloneqq \frac{u(r_kx)-u(0)-a_k \cdot (r_kx)}{r_k\mathcal{E}_{r_k}(u)}, \quad \text{and} \quad \xi_k \coloneqq \frac{a_k}{\mathcal{E}_{r_k}(u)}.
    \end{equation*}
    Then $(v_k, \xi_k)$ satisfies \eqref{eq-normalized1} and \eqref{eq-normalized2}. It follows from a direct calculation that 
    \begin{equation*}
        \begin{aligned}
        (Dv_k)_{B_{\rho}}&=\fint_{B_{\rho}}Dv_k(x)\,\mathrm{d}x=\frac{1}{\mathcal{E}_{r_k}(u)}\left(\fint_{B_{\rho}}Du(r_kx)\,\mathrm{d}x-a_k\right)\\
        &=\frac{1}{\mathcal{E}_{r_k}(u)}\left(\fint_{B_{r_{k+1}}}Du(y)\,\mathrm{d}y-a_k\right)=\frac{a_{k+1}-a_k}{\mathcal{E}_{r_k}(u)}
        \end{aligned}
    \end{equation*}
    and so
    \begin{equation*}
        \begin{aligned}
            (\mathcal{E}_{\rho}(v_k))^2=\fint_{B_{\rho}}|Dv_k-(Dv_k)_{B_{\rho}}|^2=\left(\frac{\mathcal{E}_{r_{k+1}}(u)}{\mathcal{E}_{r_k}(u)}\right)^2.
        \end{aligned}
    \end{equation*}
    Thus, we apply Lemma~\ref{lem-initial-step} to find that
    \begin{equation*}
         \frac{\mathcal{E}_{r_{k+1}}(u)}{\mathcal{E}_{r_k}(u)}=\mathcal{E}_{\rho}(v_k) \leq \rho^{\beta_{p, \hat c}},
    \end{equation*}
    which proves \eqref{eq-excess-induction} by induction.\\

    We next derive a pointwise estimate from the excess decay. An application of Lemma~\ref{lem-xi-bound} for $(v_k, \xi_k)$ gives that
    \begin{equation*}
        |a_k| \leq C_d\mathcal{E}_{r_k}(u) \quad \text{and} \quad \|v_k\|_{C^{1, \gamma_0}(\overline B_{2/3})} \leq C_d.
    \end{equation*}
    By scaling back and using the excess decay, we observe that
    \begin{equation*}
        \sup_{B_{2r_k/3}}|Du| \leq 2C_d\mathcal{E}_{r_k}(u) \leq 2C_d \rho^{k\beta_{p, \hat c}}\mathcal{E}_{r_0}(u).
    \end{equation*}
    For each $0<|x|<1/4$, choose $k$ so that 
    \begin{equation*}
        \frac{2r_0}{3}\rho^{k+1}=\frac{2r_{k+1}}{3} <|x| \leq \frac{2r_k}{3}=\frac{2r_0}{3}\rho^k.
    \end{equation*}
    Then we have
    \begin{equation*}
        |Du(x)|\leq C\mathcal{E}_{r_0}(u) |x|^{\beta_{p, \hat c}}.
    \end{equation*}
    On the other hand, we obtain
    \begin{equation*}
        \mathcal{E}_{r_0}(u) \leq \left(\fint_{B_{r_0}}|Du|^2\right)^{1/2} \leq \left(\fint_{B_{r_0}}|Du|^p\right)^{1/p} \leq C\|u\|_{L^{\infty}(B_1)},
    \end{equation*}
    where we used H\"older's inequality and the Caccioppoli inequality for $p$-harmonic functions. A combination of these estimates leads to the desired conclusion.
\end{proof}

We now extend the estimate from critical points to arbitrary points. The proof separates two regimes depending on whether the size of the gradient $|Du(0)|$ is small or not. The strict inequality $\beta_{p, \hat c}>\beta_{p,c}$ allows the first regime to be treated by compactness. 

\begin{lemma}[Near-critical regime]\label{lem-near-critical}
    Let $p\in (2, 2+\varepsilon_{d,c})$ with $c \in (0, 1/2)$. Then there exist $\rho_{\ast}, \eta_{\ast} \in (0, 1/8)$, depending only on $d$, $p$, and $c$, such that  if $u$ is $p$-harmonic in $B_1$ and 
    \begin{equation*}
        u(0)=0, \quad \|u\|_{L^{\infty}(B_1)} \leq 1, \quad |Du(0)| \leq \eta_{\ast},
    \end{equation*}
    then 
    \begin{equation*}
        \sup_{B_{\rho_{\ast}}}|u| \leq \rho_{\ast}^{1+\beta_{p,c}}.
    \end{equation*}
\end{lemma}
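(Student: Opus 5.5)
The plan is a compactness argument at a fixed exponent $p$, using the growth estimate at critical points (Lemma~\ref{lem-growth-critical}) together with the strict gap $\beta_{p,\hat c}>\beta_{p,c}$.

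\textbf{Step 1: choice of $\rho_\ast$.} Lemma~\ref{lem-growth-critical} gives a constant $C_1=C_1(d,p,c)$ such that every $p$-harmonic $\tilde u$ in $B_1$ with $D\tilde u(0)=0$, $\tilde u(0)=0$ and $\|\tilde u\|_{L^\infty(B_1)}\leq 2$ satisfies
\begin{equation*}
\sup_{B_r}|\tilde u|\leq 2C_1 r^{1+\beta_{p,\hat c}} \quad \text{for } r\le 1/4.
\end{equation*}
Since $\beta_{p,\hat c}>\beta_{p,c}$, I choose $\rho_\ast\in(0,1/8)$ so small that
\begin{equation*}
2C_1\rho_\ast^{1+\beta_{p,\hat c}}\leq \tfrac12\rho_\ast^{1+\beta_{p,c}},
\end{equation*}
which is possible because $\rho_\ast^{\beta_{p,\hat c}-\beta_{p,c}}\to0$.

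\textbf{Step 2: contradiction argument for $\eta_\ast$.} Suppose no $\eta_\ast$ works. Then there are $p$-harmonic $u_j$ (with the same fixed $p$) such that $u_j(0)=0$, $\|u_j\|_{L^\infty(B_1)}\le1$, $|Du_j(0)|\le 1/j$, and
\begin{equation*}
\sup_{B_{\rho_\ast}}|u_j|>\rho_\ast^{1+\beta_{p,c}}.
\end{equation*}
By the classical uniform $C^{1,\gamma}$ estimates for $p$-harmonic functions at fixed $p$, a subsequence converges in $C^1(\overline B_{1/2})$ to some $u_\infty$. Stability of weak solutions under $C^1_{\mathrm{loc}}$ convergence (or under local $W^{1,p}$ convergence, via Caccioppoli plus Minty monotonicity) shows that $u_\infty$ is $p$-harmonic in $B_{1/2}$. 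It also satisfies $u_\infty(0)=0$, $Du_\infty(0)=0$, $|u_\infty|\le1$, and $\sup_{B_{\rho_\ast}}|u_\infty|\geq\rho_\ast^{1+\beta_{p,c}}$.

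\textbf{Step 3: the contradiction.} The limit $u_\infty$ is only defined on $B_{1/2}$, so I apply Lemma~\ref{lem-growth-critical} to the rescaling
\begin{equation*}
\tilde u(x)=u_\infty(x/2).
\end{equation*}
This is $p$-harmonic in $B_1$ with $\|\tilde u\|_{L^\infty(B_1)}\le1$ and $D\tilde u(0)=0$. The lemma then gives
\begin{equation*}
|u_\infty(y)|\le C_1 2^{1+\beta_{p,\hat c}}|y|^{1+\beta_{p,\hat c}} \quad \text{for } |y|\le1/8.
\end{equation*}
This changes the constant only by a dimensional factor of at most $4$, so I absorb it by choosing $\rho_\ast$ in Step 1 with $C_1$ replaced by $4C_1$. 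The resulting bound gives $\sup_{B_{\rho_\ast}}|u_\infty|\le\tfrac12\rho_\ast^{1+\beta_{p,c}}$, which contradicts Step 2.

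\textbf{Main obstacle.} The main point to check is the compactness in Step 2: uniform $C^{1,\gamma}$ bounds on $B_{1/2}$ for $p$-harmonic functions bounded by $1$, and the fact that the limit is again $p$-harmonic. Both are standard at a fixed $p$; for instance, the estimates of DiBenedetto, Tolksdorf, or Lemma~2.5 of \cite{DP06} provide the bounds. The remaining work is careful bookkeeping of the constants in the rescaling step, so that $\rho_\ast$ and $\eta_\ast$ depend only on $d$, $p$ and $c$.
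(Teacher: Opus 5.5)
Your proposal is correct and follows essentially the same route as the paper: you fix $\rho_\ast$ via Lemma~\ref{lem-growth-critical} using the gap $\beta_{p,\hat c}>\beta_{p,c}$, and then obtain $\eta_\ast$ by a compactness/contradiction argument at fixed $p$. The only difference is cosmetic: you pass to the limit on $B_{1/2}$ and rescale by $x\mapsto x/2$ before applying Lemma~\ref{lem-growth-critical}, whereas the paper takes $C^1_{\mathrm{loc}}(B_1)$ convergence and applies that lemma to the limit on $B_1$ directly.
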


\begin{proof}
    By Lemma~\ref{lem-growth-critical}, every $p$-harmonic function $U$ in $B_1$ satisfying 
    \begin{equation*}
         U(0)=0, \quad \|U\|_{L^{\infty}(B_1)} \leq 1, \quad DU(0)=0
    \end{equation*}
    enjoys
    \begin{equation*}
        \sup_{B_r}|U| \leq C_0r^{1+\beta_{p, \hat c}} \quad \text{for $r \in (0, 1/4]$},
    \end{equation*}
    where $C_0=C_0(d, p, c)>0$. We choose $\rho_{\ast} \in (0, 1/8)$ so small that 
    \begin{equation*}
        C_0 \rho_{\ast}^{\beta_{p, \hat c}-\beta_{p, c}} \leq \frac{1}{4}.
    \end{equation*}

    We prove by contradiction: suppose that there exists no $\eta_{\ast}>0$ with the desired property. Then there exists a sequence of $p$-harmonic functions $\{u_j\}_{j=1}^{\infty}$ such that
    \begin{equation*}
         u_j(0)=0, \quad \|u_j\|_{L^{\infty}(B_1)} \leq 1, \quad |Du_j(0)| \leq 1/j,
    \end{equation*}
    but
    \begin{equation*}
        \sup_{B_{\rho_{\ast}}}|u_j| > \rho_{\ast}^{1+\beta_{p, c}}.
    \end{equation*}
    The interior $C^{1, \gamma_0}$ estimate for $p$-harmonic functions (see \cite{Ura68} for instance) and the compact embedding guarantee, up to a subsequence, 
    \begin{equation*}
        u_j \to U \quad \text{in $C^1_{\mathrm{loc}}(B_1)$},
    \end{equation*}
    where $U$ is $p$-harmonic, $U(0)=0$, $DU(0)=0$, and $\|U\|_{L^{\infty}(B_1)}\leq 1$. In particular, $U$ satisfies 
     \begin{equation*}
        \sup_{B_{\rho_{\ast}}}|U| \leq C_0\rho_{\ast}^{1+\beta_{p, \hat c}}\leq \frac{1}{4}\rho_{\ast}^{1+\beta_{p, c}},
    \end{equation*}
    which contradicts the preceding inequality for $u_j$ with sufficiently large $j$.
\end{proof}

\begin{lemma}[Non-degenerate regime]\label{lem-non-degenerate}
    Let $p>2$ and $M>0$. Then there exist $\sigma \in (0, 1/8)$ and $C>0$, depending only on $d$, $p$, and $M$, such that  if $u$ is $p$-harmonic in $B_2$ and 
    \begin{equation*}
        \|u\|_{L^{\infty}(B_2)} \leq M, \quad |Du(0)|=1,
    \end{equation*}
    then $|Du| \geq 1/2$ in $B_{2\sigma}$ and 
    \begin{equation*}
        \sup_{B_r}|u-u(0)-Du(0)\cdot x| \leq Cr^{2} \quad \text{for any $0<r \leq \sigma$}.
    \end{equation*}
\end{lemma}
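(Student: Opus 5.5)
The plan is to combine the uniform interior $C^{1,\gamma}$ estimate for $p$-harmonic functions with the fact that, where the gradient stays away from zero, the equation becomes uniformly elliptic and Schauder theory applies.

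\textbf{Step 1: a uniform modulus of continuity for the gradient.} By the classical interior $C^{1,\gamma_0}$ estimate for $p$-harmonic functions (e.g.\ \cite{Ura68}, \cite{DB83}), there are $\gamma=\gamma(d,p)\in(0,1)$ and $C_1=C_1(d,p)$ such that
\begin{equation*}
    \|u\|_{C^{1,\gamma}(\overline B_{1})}\leq C_1\|u\|_{L^\infty(B_2)}\leq C_1M .
\end{equation*}
In particular $|Du(x)-Du(0)|\leq C_1M|x|^{\gamma}$ for $x\in B_1$. We then choose $\sigma\in(0,1/8)$ with $C_1M(2\sigma)^{\gamma}\leq 1/2$. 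Since $|Du(0)|=1$, this gives $|Du|\geq 1/2$ in $B_{2\sigma}$, and also $|Du|\leq 1+C_1M$ there.

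\textbf{Step 2: second-order regularity in the non-degenerate region.} On $B_{2\sigma}$ the gradient satisfies $1/2\leq|Du|\leq \Lambda:=1+C_1M$, so the $p$-Laplacian is uniformly elliptic along $u$. We may use the standard fact that $p$-harmonic functions belong to $W^{2,2}_{\mathrm{loc}}$ where $Du\neq0$; alternatively, we justify the following by difference quotients. Each derivative $u_k$ weakly solves the linear equation
\begin{equation*}
    \mathrm{div}\big(A(x)Du_k\big)=0,\qquad A(x)=|Du|^{p-2}\Big(I+(p-2)\tfrac{Du\otimes Du}{|Du|^2}\Big).
\end{equation*}
The coefficient matrix $A$ has ellipticity constants between $2^{2-p}$ and $(p-1)\Lambda^{p-2}$. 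Its entries are $C^{0,\gamma}$ on $B_{2\sigma}$, because the map $z\mapsto|z|^{p-2}(I+(p-2)\hat z\otimes\hat z)$ is smooth on the annulus $\{1/2\leq|z|\leq\Lambda\}$ and $Du\in C^{0,\gamma}$ with norm at most $C_1M$. The Schauder estimate for divergence-form equations with Hölder coefficients then gives
\begin{equation*}
    \|Du_k\|_{L^\infty(B_{3\sigma/2})}\leq \|u_k\|_{C^{1,\gamma}(B_{3\sigma/2})}\leq C\|u_k\|_{L^\infty(B_{2\sigma})}\leq C(d,p,M).
\end{equation*}
So $\|D^2u\|_{L^\infty(B_{\sigma})}\leq C$.

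\textbf{Step 3: conclusion.} By Taylor's formula along segments from $0$, for $|x|\leq r\leq\sigma$,
\begin{equation*}
    |u(x)-u(0)-Du(0)\cdot x|\leq \tfrac12\|D^2u\|_{L^\infty(B_\sigma)}|x|^2\leq Cr^2 .
\end{equation*}
All constants depend only on $d$, $p$, $M$.

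The main obstacle is Step 2: justifying the differentiated equation, and keeping the Schauder constants uniform. Uniformity is ensured because the ellipticity bounds and the Hölder norm of $A$ are controlled by $M$ through Step 1 alone. For the justification, I would first try the standard route: the non-degenerate region makes the problem a quasilinear uniformly elliptic equation with smooth structure, for which $W^{2,2}$ and then $C^{2,\gamma}$ regularity is classical (Ladyzhenskaya--Ural'tseva).
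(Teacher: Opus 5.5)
Your proof is correct. Steps 1 and 3 coincide with the paper's, but Step 2 takes a different route.

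The paper does not differentiate the equation. It uses the equivalence of weak and viscosity solutions \cite{JLM01, KMP12} to view $u$ as a viscosity solution of the normalized $p$-Laplace equation $a_{ij}(x)D_{ij}u=0$, with $a_{ij}=\delta_{ij}+(p-2)D_iuD_ju/|Du|^2$. On $B_{2\sigma}$ this is a linear, uniformly elliptic nondivergence equation ($I\leq A\leq (p-1)I$) whose coefficients are $C^{\gamma_0}$. The paper then applies the interior nondivergence Schauder estimate to obtain the $C^{2,\gamma_0}$ bound directly.

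You instead stay in the weak framework. You differentiate the divergence-form equation and apply divergence-form Schauder theory to each $u_k$, using the linearized matrix $Da(Du)$ with $a(z)=|z|^{p-2}z$. Your route buys independence from viscosity theory: no equivalence theorem and no Schauder theory for viscosity solutions. Its cost is that you must justify $u\in W^{2,2}$ on $B_{2\sigma}$ before differentiating, and you only sketch this.

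That justification does go through, as follows.
\begin{itemize}
\item For $|h|$ small (say $C_1M|h|^{\gamma}\leq 1/4$), the H\"older bound keeps the segment between $Du(x)$ and $Du(x+he)$ inside $\{1/4\leq|z|\leq\Lambda\}$.
\item Hence the difference quotient of $u$ solves a uniformly elliptic divergence-form equation with matrix $\int_0^1 Da\bigl(Du(x)+t(Du(x+he)-Du(x))\bigr)\,\mathrm{d}t$.
\item Caccioppoli then gives uniform $W^{1,2}$ bounds, so $u\in W^{2,2}$ locally in $B_{2\sigma}$.
\item Next, $a(Du)\in W^{1,2}_{\mathrm{loc}}$ by the chain rule, since $a$ is smooth away from the origin.
\item Testing the equation with $D_k\varphi$ and integrating by parts yields $\mathrm{div}(Da(Du)Du_k)=0$ weakly.
\end{itemize}

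The paper's argument is shorter once the cited equivalence is accepted. Yours is more self-contained and needs only standard variational tools.
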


\begin{proof}
    By the interior $C^{1, \gamma_0}$ estimate for $p$-harmonic functions, there exist $\gamma_0 \in (0,1)$ and $C_0>0$, depending only on $d$ and $p$, such that
    \begin{equation*}
        \|u\|_{C^{1, \gamma_0}(\overline B_1)} \leq C_0M.
    \end{equation*}
    We choose $\sigma \in (0, 1/8)$ so small that
    \begin{equation*}
        C_0M(2\sigma)^{\gamma_0} \leq \frac{1}{2}.
    \end{equation*}
    Since $|Du(0)|=1$, it follows that $|Du| \geq 1/2$ in $B_{2\sigma}$. 
    
    On the other hand, we recall from \cite[Corollary~2.6]{JLM01} and \cite[Theorem~6]{KMP12} that the $p$-harmonic function $u$ can be understood as a viscosity solution of the normalized $p$-Laplace equation
    \begin{equation*}
        a_{ij}D_{ij}u=0 \quad \text{for $a_{ij} \coloneqq \delta_{ij}+(p-2)\frac{D_iuD_ju}{|Du|^2}$}.
    \end{equation*}
    Since $I_d\leq A=(a_{ij}) \leq (p-1)I_d $, the equation is uniformly elliptic. Moreover, the $C^{\gamma_0}$ regularity of $Du$ together with the lower bound $|Du| \geq 1/2$ allows us to employ the interior Schauder estimate to conclude that
    \begin{equation*}
        \|u\|_{C^{2, \gamma_0}(\overline B_{\sigma})} \leq C(d, p, M).\qedhere
    \end{equation*}
\end{proof}

\begin{lemma}[Growth estimate at general points]\label{lem-iteration}
    Let $p \in (2, 2+\varepsilon_{d, c})$ with $c \in (0, 1/2)$. Suppose that $u$ is $p$-harmonic in $B_1$ and 
    \begin{equation*}
        u(0)=0, \quad \|u\|_{L^{\infty}(B_1)} \leq 1.
    \end{equation*}
    Then 
    \begin{equation*}
        \sup_{B_r}|u-Du(0) \cdot x| \leq Cr^{1+\beta_{p, c}} \quad \text{for any $0<r \leq 1/4$},
    \end{equation*}
    where $C>0$ depends only on $d$, $p$, and $c$.
\end{lemma}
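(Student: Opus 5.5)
The plan is a standard iteration that combines the near-critical regime (Lemma~\ref{lem-near-critical}) with the non-degenerate regime (Lemma~\ref{lem-non-degenerate}). We run a geometric iteration on the scales $r_k=\rho_\ast^k$ with the rescalings
\begin{equation*}
    u_k(x)\coloneqq \frac{u(r_kx)-\ell_k(r_kx)}{r_k^{1+\beta_{p,c}}},
\end{equation*}
where $\ell_k$ is a linear function, initially $\ell_0=0$. Let $\eta_\ast,\rho_\ast$ be given by Lemma~\ref{lem-near-critical}. Suppose we have reached step $k$ with $\|u_k\|_{L^\infty(B_1)}\le 1$ and $u_k(0)=0$.

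\textbf{Near-critical steps.} If $|Du(0)|\le \eta_\ast r_k^{\beta_{p,c}}$, we keep $\ell_k=0$. In this case $u_k=u(r_k\cdot)/r_k^{1+\beta_{p,c}}$ is $p$-harmonic, since the $p$-Laplacian is invariant under scaling and under multiplication by constants. It also satisfies $|Du_k(0)|=|Du(0)|/r_k^{\beta_{p,c}}\le\eta_\ast$. Lemma~\ref{lem-near-critical} then gives $\sup_{B_{\rho_\ast}}|u_k|\le\rho_\ast^{1+\beta_{p,c}}$, which is exactly $\|u_{k+1}\|_{L^\infty(B_1)}\le 1$.

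\textbf{Stopping time.} Since $r_k\to0$, either $Du(0)=0$ and the iteration never stops, or there is a first index $k_0$ with $|Du(0)|>\eta_\ast r_{k_0}^{\beta_{p,c}}$. In the first case we get $\sup_{B_{r_k}}|u|\le r_k^{1+\beta_{p,c}}$ for all $k$. Interpolating between consecutive scales gives the claim with $C=\rho_\ast^{-1-\beta_{p,c}}$; alternatively one can invoke Lemma~\ref{lem-growth-critical} directly. The assumption $\|u\|_{L^\infty}\le1$ makes $k=0$ admissible. If already $k_0=0$, then $|Du(0)|>\eta_\ast$ and we pass directly to the non-degenerate argument at unit scale.

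\textbf{Non-degenerate case.} Now take $k=k_0$ and set $\lambda\coloneqq|Du(0)|$. By minimality of $k_0$, $\lambda\le \eta_\ast r_{k_0-1}^{\beta_{p,c}}$, so $\lambda\approx r_{k_0}^{\beta_{p,c}}$ up to factors depending on $\eta_\ast,\rho_\ast$. Also $\sup_{B_{r_{k_0}}}|u|\le r_{k_0}^{1+\beta_{p,c}}$. Consider
\begin{equation*}
    U(x)\coloneqq \frac{u(r_{k_0}x/2)}{\lambda r_{k_0}/2}\quad\text{on } B_2 .
\end{equation*}
It is $p$-harmonic with $|DU(0)|=1$ and $\|U\|_{L^\infty(B_2)}\le 2r_{k_0}^{\beta_{p,c}}/\lambda\le M$, where $M=M(d,p,c)$ since $\lambda\gtrsim r_{k_0}^{\beta_{p,c}}$. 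Lemma~\ref{lem-non-degenerate} gives $\sup_{B_s}|U-DU(0)\cdot x|\le Cs^2$ for $s\le\sigma$.

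\textbf{Scaling back.} For $r\le \sigma r_{k_0}/2$ this yields
\begin{equation*}
    |u-Du(0)\cdot x|\le C\lambda r^2/r_{k_0}\lesssim r_{k_0}^{\beta_{p,c}-1}r^2\le C r^{1+\beta_{p,c}},
\end{equation*}
using $r\le r_{k_0}$ and $\beta_{p,c}<1$.

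\textbf{Intermediate and large scales.} For $\sigma r_{k_0}/2<r\le r_{k_0}$ we bound the two terms separately: $|u|\le r_{k_0}^{1+\beta_{p,c}}$, and $|Du(0)\cdot x|\le\lambda r\lesssim r_{k_0}^{\beta_{p,c}}r$. Both are $\lesssim r^{1+\beta_{p,c}}$ since $r\sim r_{k_0}$. For $r_{k_0}<r\le1/4$, pick $k<k_0$ with $r_{k+1}<r\le r_k$. Then $|u|\le r_k^{1+\beta_{p,c}}$ and $|Du(0)|\le\eta_\ast r_k^{\beta_{p,c}}$, which again gives $\lesssim r^{1+\beta_{p,c}}$.

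\textbf{Main obstacle.} The main difficulty is the bookkeeping of constants: $M$, and hence $\sigma$ and $C$, must depend only on $(d,p,c)$. This is guaranteed by the two-sided comparability $\lambda\sim r_{k_0}^{\beta_{p,c}}$ that comes from the stopping time. The step also uses $\beta_{p,c}<1$, so that the quadratic decay in the non-degenerate regime dominates the $r^{1+\beta_{p,c}}$ scale.
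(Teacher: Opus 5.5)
Your proposal is correct and follows essentially the same route as the paper: the same stopping-time iteration of Lemma~\ref{lem-near-critical} on the scales $\rho_\ast^k$, then Lemma~\ref{lem-non-degenerate} at the stopping scale, then separate bounds on small, intermediate and large ranges of $r$. The differences are minor. You rescale at $r_{k_0}/2$ with normalization $|Du(0)|\,r_{k_0}/2$ rather than at $|Du(0)|^{1/\beta_{p,c}}$, which spares the paper's extra shrinking of $\eta_\ast$ to guarantee $4\lambda<r_N$. In the case $k_0=0$, you should say explicitly that the upper bound $|Du(0)|\le C(d,p)$ comes from the interior gradient estimate, as in the paper's Case~2.
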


\begin{proof}
    Let $\rho_{\ast}, \eta_{\ast} \in (0, 1/8)$ be the constants chosen in Lemma~\ref{lem-near-critical}. We decrease $\eta_{\ast}$, if necessary, so that $\rho_{\ast} \eta_{\ast}^{-1/\beta_{p, c}}>4$. We set $r_k \coloneqq \rho_{\ast}^k$ and consider a rescaled function
    \begin{equation*}
        u_k(x) \coloneqq \frac{u(r_kx)}{r_k^{1+\beta_{p, c}}}.
    \end{equation*}
    Then $u_k(0)=0$ and $|Du_k(0)|=r_k^{-\beta_{p, c}}|Du(0)|$.

\medskip
\noindent\textit{(Case 1: $|Du(0)| \leq \eta_{\ast}$.)} We observe that whenever 
    \begin{equation*}
        \sup_{B_{r_k}}|u| \leq r_k^{1+\beta_{p, c}} \quad \text{and} \quad |Du(0)| \leq \eta_{\ast} r_k^{\beta_{p, c}},
    \end{equation*}
    we can apply Lemma~\ref{lem-near-critical} for $u_k$ to obtain
    \begin{equation*}
        \sup_{B_{r_{k+1}}}|u| \leq r_{k+1}^{1+\beta_{p, c}}.
    \end{equation*}
    If $Du(0)=0$, then this argument can be iterated for every $k$, and the desired growth follows.

    We next suppose that $0<|Du(0)|\leq \eta_{\ast}$, and let $N \geq 1$ be the first integer such that $|Du(0)|>\eta_{\ast} r_N^{\beta_{p, c}}$. The preceding iteration argument is valid up to scale $r_N$:
    \begin{equation*}
        \sup_{B_{r_N}}|u| \leq r_N^{1+\beta_{p, c}}.
    \end{equation*}
    If we set the intermediate scale $\lambda \coloneqq |Du(0)|^{1/\beta_{p, c}}$, then the minimality of $N$ shows 
    \begin{equation}\label{eq-minimal-N}
        4\lambda<\rho_{\ast} \eta_{\ast}^{-1/\beta_{p, c}}\lambda \leq r_N <\eta_{\ast}^{-1/\beta_{p, c}}\lambda.
    \end{equation}
    Let us consider 
    \begin{equation*}
        u_{\lambda}(x) \coloneqq \frac{u(\lambda x)}{\lambda^{1+\beta_{p, c}}},
    \end{equation*}
    which is $p$-harmonic in $B_2$ and 
    \begin{equation*}
        |Du_{\lambda}(0)|=1, \quad \|u_{\lambda}\|_{L^{\infty}(B_2)} \leq \left(\frac{r_N}{\lambda}\right)^{1+\beta_{p, c}} \leq \eta_{\ast}^{-(1+\beta_{p, c})/\beta_{p, c}} \eqqcolon M_{d, p, c}.
    \end{equation*}
    For $0<r \leq \sigma \lambda$, an application of Lemma~\ref{lem-non-degenerate} yields that
    \begin{equation*}
        \sup_{B_r}|u-Du(0) \cdot x| \leq C\lambda^{1+\beta_{p, c}}\left(\frac{r}{\lambda}\right)^2\leq Cr^{1+\beta_{p, c}},
    \end{equation*}
    where we used the condition $\beta_{p, c}<1$ in the last inequality. 

    For $\sigma \lambda < r \leq r_N$, it follows from \eqref{eq-minimal-N} that
    \begin{equation*}
        \sup_{B_r}|u-Du(0)\cdot x| \leq r_N^{1+\beta_{p, c}}+|Du(0)|r \leq C\lambda^{1+\beta_{p, c}}+\lambda^{\beta_{p, c}}r \leq Cr^{1+\beta_{p, c}}.
    \end{equation*}
    
    Finally, if $r_N < r \leq 1/4$, choose $k<N$ so that $r_{k+1}<r \leq r_k$. Since
    \begin{equation*}
        \sup_{B_r}|u| \leq r_k^{1+\beta_{p, c}} \leq \rho_{\ast}^{-(1+\beta_{p, c})}r^{1+\beta_{p, c}} \quad \text{and} \quad |Du(0)| \leq \eta_{\ast} r_{N-1}^{\beta_{p, c}}\leq \eta_{\ast} \rho_{\ast}^{-\beta_{p, c}}r^{\beta_{p, c}},
    \end{equation*}
    the growth estimate also holds in this range.

\medskip
\noindent\textit{(Case 2: $|Du(0)| > \eta_{\ast}$.)} The interior $C^{1, \gamma_0}$ estimate for $u$ gives 
    \begin{equation*}
        \|u\|_{C^{1, \gamma_0}(B_{1/2})} \leq C_0=C_0(d, p).
    \end{equation*}
    Choose $r_{\ast}=r_{\ast}(d, p, c) \in (0, 1/8)$ small enough so that $C_0(2r_{\ast})^{\gamma_0} \leq \eta_{\ast}/2$ and that $|Du| \geq \eta_{\ast}/2$ in $B_{2r_{\ast}}$. By repeating the argument in Lemma~\ref{lem-non-degenerate}, we have
    \begin{equation*}
        \|u\|_{C^{2, \gamma_0}(B_{r_{\ast}})} \leq C(d, p, c),
    \end{equation*}
    which implies the growth estimate for $0<r \leq r_{\ast}$. For $r_{\ast} < r \leq 1/4$, the growth estimate immediately follows from $\|u\|_{L^{\infty}(B_1)} \leq 1$ and the interior gradient bound. This completes the proof. 
\end{proof}

We are now ready to prove our main theorems.
\begin{proof}[Proof of Theorem~\ref{thm-main-p-harmonic}]
    Fix $x_0 \in B_{1/2}$ and set $R=1/2$. If $\|u\|_{L^{\infty}(B_1)}=0$, there is nothing to prove. Otherwise, we define
    \begin{equation*}
        U(y) \coloneqq \frac{u(x_0+Ry)-u(x_0)}{2\|u\|_{L^{\infty}(B_1)}}.
    \end{equation*}
    Then $U$ is $p$-harmonic in $B_1$, $U(0)=0$, and $\|U\|_{L^{\infty}(B_1)} \leq 1$. By applying Lemma~\ref{lem-iteration} and scaling back, we obtain
    \begin{equation*}
        |u(x)-u(x_0)-Du(x_0) \cdot (x-x_0)| \leq C\|u\|_{L^{\infty}(B_1)} |x-x_0|^{1+\beta_{p, c}}.
    \end{equation*}
    A standard covering lemma finishes the proof of Theorem~\ref{thm-main-p-harmonic}.
\end{proof}

\begin{proof}[Proof of Theorem~\ref{cor-conjecture}]
For $c=1/4$, we set $\varepsilon_d=\varepsilon_{d,1/4}$, and let $C=C(d, p, 1/4)>0$ be the constant in Theorem~\ref{thm-main-p-harmonic}. For every $p\in(2,2+\varepsilon_d)$, Theorem~\ref{thm-main-p-harmonic} verifies the hypotheses of Theorem~\ref{thm-ATU18} with
\begin{equation*}
    \alpha_0=\beta_{p,1/4}
    =\frac1{p-1}+\frac{p-2}{4}
    \in\left(\frac1{p-1},1\right)
\end{equation*}
and $C_0=C$. Thus, Theorem~\ref{thm-ATU18} yields the asserted local $C^{p'}$ regularity.
\end{proof}

\begin{appendix}
\section{Sharp lower bound for \texorpdfstring{$\mu$}{mu}}\label{sec-appendix}

\begin{lemma}
\label{prop:wvar}
Let $m \geq 2$. Let $\nu=(\nu_1,\ldots,\nu_m)\in\mathbb R^m$ satisfy $\nu_i\geq0$ for every $i$ and
$\sum_{i=1}^m\nu_i=1$.
Then, for every $\lambda=(\lambda_1,\ldots,\lambda_m)\in\mathbb R^m$,
\begin{equation}\label{eq:wvar}
    \begin{aligned}
        V_\nu(\lambda)
        \coloneqq
        \sum_{i=1}^m\nu_i \lambda_i^2
        -\left(\sum_{i=1}^m\nu_i \lambda_i\right)^2
        \leq
        \left(\frac14+\frac12\sum_{i=1}^m\nu_i^2\right)
        |\lambda|^2.
    \end{aligned}
\end{equation}
For $\lambda\neq0$, equality holds if and only if, up to a
permutation of the coordinates,
\begin{equation}\label{eq:wvar-eq}
    \nu=\left(\frac12,\frac12,0,\ldots,0\right)
    \quad\text{and}\quad
    \lambda=(a,-a,0,\ldots,0)
\end{equation}
for some $a\neq0$.
\end{lemma}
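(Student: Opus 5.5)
The idea is to split $V_\nu(\lambda)$ into a diagonal part, handled by an elementary scalar inequality, and an interaction part, handled by separating positive and negative contributions and applying Cauchy--Schwarz and AM--GM. No earlier result from the paper is needed. Throughout, set $s\coloneqq\sum_i\nu_i^2$, $x_i\coloneqq\nu_i\lambda_i$ and $M\coloneqq\sum_i x_i=\sum_i\nu_i\lambda_i$.

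\emph{Step 1 (diagonal part).} Since $(\nu_i-\tfrac12)^2\geq0$, we have $\nu_i\leq\tfrac14+\nu_i^2$ for each $i$. Multiplying by $\lambda_i^2$ and summing gives
\begin{equation*}
V_\nu(\lambda)\leq\frac14|\lambda|^2+\Big(\sum_i x_i^2-M^2\Big).
\end{equation*}
So it suffices to prove $\sum_i x_i^2-M^2\leq\tfrac{s}{2}|\lambda|^2$.

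\emph{Step 2 (interaction part).} Let $I_+=\{i:\lambda_i>0\}$ and $I_-=\{i:\lambda_i<0\}$. Define
\begin{equation*}
P\coloneqq\sum_{I_+}\nu_i\lambda_i\geq0,\qquad N\coloneqq\sum_{I_-}\nu_i|\lambda_i|\geq0,
\end{equation*}
so that $M=P-N$. Because all terms in each group have the same sign, $\sum_{I_\pm}x_i^2\leq\big(\sum_{I_\pm}|x_i|\big)^2$. Hence $\sum_i x_i^2\leq P^2+N^2$, and therefore
\begin{equation*}
\sum_i x_i^2-M^2\leq P^2+N^2-(P-N)^2=2PN.
\end{equation*}
Write $A_\pm=\sum_{I_\pm}\nu_i^2$ and $L_\pm=\sum_{I_\pm}\lambda_i^2$. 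Cauchy--Schwarz gives $P\leq\sqrt{A_+L_+}$ and $N\leq\sqrt{A_-L_-}$. Then, using AM--GM twice,
\begin{equation*}
2PN\leq2\sqrt{A_+A_-}\sqrt{L_+L_-}\leq2\cdot\frac{A_++A_-}{2}\cdot\frac{L_++L_-}{2}\leq\frac{s}{2}|\lambda|^2.
\end{equation*}
Together with Step 1, this proves \eqref{eq:wvar}.

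\emph{Step 3 (equality).} I expect this to be the only delicate part. Suppose $\lambda\neq0$ and equality holds; then every inequality above is an equality.
\begin{itemize}
\item Equality in the last display forces $A_+=A_-$ and $L_+=L_-$ (both AM--GM steps), and $A_++A_-=s$, i.e. $\nu_i=0$ whenever $\lambda_i=0$.
\item Since $\lambda\neq0$ and $L_+=L_-$, both $L_\pm>0$. Hence $P,N>0$, because $2PN$ must equal $\tfrac{s}{2}|\lambda|^2>0$.
\item Equality in $\sum_{I_\pm}x_i^2\leq(\sum_{I_\pm}|x_i|)^2$ means at most one nonzero $x_i$ in each group. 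Combined with the Cauchy--Schwarz equality cases, this forces $I_+=\{i_+\}$ and $I_-=\{i_-\}$ to be singletons. We get $\lambda_{i_\pm}=\pm a$ from $L_+=L_-$, and $\nu_{i_+}=\nu_{i_-}$ from $A_+=A_-$.
\item The support condition then gives $\nu_{i_\pm}=\tfrac12$. This is consistent with equality in Step 1, which requires $\nu_i=\tfrac12$ wherever $\lambda_i\neq0$.
\end{itemize}
This yields \eqref{eq:wvar-eq} up to permutation. Conversely, a direct computation shows that \eqref{eq:wvar-eq} gives $V_\nu(\lambda)=a^2=\tfrac12|\lambda|^2$, which is the right-hand side of \eqref{eq:wvar}.

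The step that requires the most care is this equality bookkeeping, in particular ruling out several indices within one sign class. The decisive constraint for that is the sign-coherent inequality $\sum_{I_\pm}x_i^2\leq(\sum_{I_\pm}|x_i|)^2$.
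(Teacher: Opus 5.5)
Your proof is correct, and it takes a genuinely different route from the paper's.

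The paper normalizes $|\lambda|=1$ and writes $V_\nu(\lambda)-\frac12\sum_i\nu_i^2=\Lambda\cdot\nu-\frac12\nu^\top A\nu$, with $\Lambda=(\lambda_1^2,\ldots,\lambda_m^2)$ and $A=I_m+2\lambda\lambda^\top$. Completing the square eliminates $\nu$ and leaves $\frac12\Lambda^\top A^{-1}\Lambda=\frac12\bigl(s_4-\frac23 s_3^2\bigr)$. This is then bounded by $\frac14$ using the power-sum identity $s_4-s_3^2=\sum_{i<j}\lambda_i^2\lambda_j^2(\lambda_i-\lambda_j)^2$ together with $(\lambda_i-\lambda_j)^2\le 2(\lambda_i^2+\lambda_j^2)$.

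You instead split off the $\frac14|\lambda|^2$ term with the scalar bound $\nu_i\le\frac14+\nu_i^2$. You then control the remaining cross term by sign splitting ($\sum_i x_i^2-M^2\le 2PN$), Cauchy--Schwarz, and AM--GM.

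What each route buys:
\begin{itemize}
\item The paper's completion of the square is unconstrained, so its inequality actually holds for every $\nu\in\mathbb{R}^m$. Its equality case is mechanical because the maximizer $\nu=A^{-1}\Lambda$ is explicit.
\item Your argument genuinely needs $\nu_i\ge 0$, since that is what makes each $x_i$ share the sign of $\lambda_i$.
\item In exchange, yours is shorter, avoids the matrix inverse, and shows where each constant comes from: $\frac14=\max_t(t-t^2)$, and $\frac12$ comes from balancing the positive and negative blocks.
\end{itemize}

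One small point of order in your equality analysis. The AM--GM equality cases give $A_+=A_-$ and $L_+=L_-$ only once you know the factors are positive. You should therefore first use that the common value of the chain is $\frac{s}{2}|\lambda|^2>0$, which gives $P,N>0$ and hence $A_\pm,L_\pm>0$. Only then should you read off the AM--GM equalities. As written, your second bullet derives $L_\pm>0$ from $L_+=L_-$, which is slightly circular; the positivity of $2PN$ that you also note there repairs it.

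The singleton conclusion is also quicker than you suggest. Equality in Step~1 forces $\nu_i=\frac12$ wherever $\lambda_i\neq 0$, so every $x_i$ with $i\in I_\pm$ is nonzero. Then ``at most one nonzero $x_i$ per group'' immediately gives $|I_\pm|\le 1$, without the Cauchy--Schwarz equality case.
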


\begin{proof}
The case $\lambda=0$ is immediate. Since both sides of \eqref{eq:wvar} are homogeneous of degree two in $\lambda$, it suffices to assume $|\lambda|=1$ and prove
\begin{equation}\label{eq:wvar-red}
    V_\nu(\lambda)
    \leq\frac14+\frac12\sum_{i=1}^m\nu_i^2.
\end{equation}

Let $\Lambda=(\lambda_1^2,\ldots,\lambda_m^2)$ and $s_k=\sum_{i=1}^m \lambda_i^k$. We consider the positive definite matrix $A\coloneqq I_m+2\lambda \lambda^\top$, whose inverse is $A^{-1}=I_m-\frac23\lambda \lambda^\top$. Then a direct computation gives
\begin{equation}\label{eq:wvar-csq}
    \begin{aligned}
        V_\nu(\lambda)-\frac12\sum_{i=1}^m\nu_i^2
        &=-\frac12\nu^\top A\nu+\Lambda\cdot\nu\\
        &=\frac12\Lambda^\top A^{-1}\Lambda
        -\frac12(\nu-A^{-1}\Lambda)^\top
        A(\nu-A^{-1}\Lambda)\\
        &\leq\frac12\Lambda^\top A^{-1}\Lambda
        =\frac12\left(s_4-\frac23s_3^2\right).
    \end{aligned}
\end{equation}

It remains to show that $s_4-2s_3^2/3\leq 1/2$.
Using $s_2=1$, we obtain
\begin{equation}\label{eq:wvar-pair-bd}
    \begin{aligned}
        s_4-s_3^2
        &=\sum_{i<j}\lambda_i^2\lambda_j^2(\lambda_i-\lambda_j)^2\\
        &\leq2\sum_{i<j}\lambda_i^2\lambda_j^2(\lambda_i^2+\lambda_j^2)\\
        &\leq2\sum_{i<j}\lambda_i^2\lambda_j^2
        =1-s_4.
    \end{aligned}
\end{equation}
Hence,
\begin{equation}\label{eq:wvar-mom}
    s_4-\frac23s_3^2
    =\frac12(2s_4-s_3^2)-\frac16s_3^2
    \leq\frac12.
\end{equation}
Together with \eqref{eq:wvar-csq}, this proves \eqref{eq:wvar-red}.

We next determine when equality holds. Suppose that equality holds in \eqref{eq:wvar-red}. Then every inequality above must be an equality.
In particular, \eqref{eq:wvar-mom} gives $s_3=0$. Since $|\lambda|=1$ and $s_3=0$, there are at least two nonzero components. For any $i<j$ with $\lambda_i\lambda_j\neq0$, equality in \eqref{eq:wvar-pair-bd} implies $\lambda_i=-\lambda_j$ and $\lambda_i^2+\lambda_j^2=1$. Thus, all other components vanish and after a permutation, we have $\lambda=(1,-1,0,\ldots,0)/\sqrt2$. Moreover, \eqref{eq:wvar-csq} gives $\nu=A^{-1}\Lambda$. Since $\lambda^\top \Lambda=s_3=0$, we obtain $\nu=\Lambda=(1/2,1/2,0,\ldots,0)$.

For arbitrary $\lambda\neq0$, applying this conclusion to $\lambda/|\lambda|$ yields \eqref{eq:wvar-eq}. Conversely, direct substitution shows that \eqref{eq:wvar-eq} gives equality in \eqref{eq:wvar}. This completes the proof.
\end{proof}

\begin{proposition}\label{prop:mu-lb}
Let $d\geq2$, and let
$H\in\mathrm{Sym}_d\setminus\{0\}$ satisfy
$\operatorname{tr}H=0$. Then
\begin{equation*}
    \mu(H)\geq-\frac12.
\end{equation*}
Equality holds if and only if, after an orthogonal change
of coordinates,
\begin{equation*}
    H=\operatorname{diag}(a,-a,0,\ldots,0) \quad \text{for some $a\neq0$}.
\end{equation*}
\end{proposition}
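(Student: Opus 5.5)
We follow the route of Proposition~\ref{prop-uniform-gap}, with Lemma~\ref{prop:wvar} in place of the crude bound $V_\nu(\lambda)\leq \mathrm{tr}(H^2)/2$. By orthogonal invariance we may take $H=\mathrm{diag}(\lambda_1,\ldots,\lambda_d)$ with $\sum_i\lambda_i=0$ and $|\lambda|^2=\mathrm{tr}(H^2)$. As in that proof, Lemma~\ref{lem-mu} gives
\begin{equation*}
\mu(H)=-\frac{2}{|\lambda|^2}\fint_{\mathbb S^{d-1}}V_{\nu(\omega)}(\lambda),\qquad \nu_i(\omega)=\frac{\lambda_i^2\omega_i^2}{\sum_j\lambda_j^2\omega_j^2}.
\end{equation*}
So the claim $\mu(H)\geq-1/2$ is equivalent to
\begin{equation*}
\fint_{\mathbb S^{d-1}}V_{\nu(\omega)}(\lambda)\leq\frac{|\lambda|^2}{4}.
\end{equation*}

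Applying Lemma~\ref{prop:wvar} pointwise with $m=d$ yields
\begin{equation*}
\fint V_\nu(\lambda)\leq |\lambda|^2\Bigl(\tfrac14+\tfrac12\fint\textstyle\sum_i\nu_i^2\Bigr).
\end{equation*}
This bound is too weak by itself, because the extra term $\frac12\fint\sum_i\nu_i^2$ is positive. The averaging must therefore exploit cancellation. The issue is that $-\frac12\sum_i\nu_i^2$ inside $V_\nu-\frac12\sum\nu_i^2=-\frac12\nu^\top A\nu+\Lambda\cdot\nu$ was simply discarded.

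The better plan is to return to the exact identity in \eqref{eq:wvar-csq}:
\begin{equation*}
V_\nu(\lambda)=\Lambda\cdot\nu-\tfrac12|\nu|^2\cdot 0-\bigl(\textstyle\sum\nu_i\lambda_i\bigr)^2 .
\end{equation*}
More precisely, we write $V_\nu=\Lambda\cdot\nu-(\lambda\cdot\nu)^2$ and average over the sphere, so that
\begin{equation*}
\fint V_\nu=\sum_i\lambda_i^2\fint\nu_i-\fint(\lambda\cdot\nu)^2 .
\end{equation*}
The strategy is then to compare this expression with the sharp inequality from the lemma. The alternative bound $\fint V_\nu\leq \frac12(s_4-\frac23 s_3^2)|\lambda|^2+\frac12\fint\sum\nu_i^2$ still carries the bad term. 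So the genuine input must be a spherical-average fact. The natural candidate is an integration by parts on $\mathbb S^{d-1}$ in the spirit of Lemma~\ref{lem-mu}, using the vector field $X$ or the coordinate fields $\omega_i e_i$ projected to the sphere. The target is an identity of the form
\begin{equation*}
\fint \nu_i\,\phi=\fint(\text{terms in }\nu_i\nu_j),
\end{equation*}
which would express $\fint\sum_i\nu_i^2$ through $\fint V_\nu$ or $\fint\Lambda\cdot\nu$. Concretely, I would compute $\mathrm{div}_{\mathbb S^{d-1}}$ of $Y_i:=\nu_i(\omega)\,(\omega_i e_i-\omega_i^2\omega)$. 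This produces $\fint\nu_i$ in terms of $\fint\nu_i(1-\nu_i)$ and $\fint\nu_i\omega_i^2$-type quantities.

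With such an identity in hand, $\frac14+\frac12\fint\sum\nu_i^2$ would combine with the discarded quadratic $-\frac12\nu^\top A\nu$ to give exactly $\frac14|\lambda|^2$. Equality would then force the pointwise equality case of Lemma~\ref{prop:wvar}, namely $\lambda=(a,-a,0,\ldots,0)$. One checks directly that this $H$ attains $\mu=-1/2$: here $V=4a^2\omega_1^2\omega_2^2/(\omega_1^2+\omega_2^2)^2$, whose average is $a^2/2$ by the planar computation $\fint\sin^2 2\theta=\frac12$. This gives $\mu=-\frac{2}{2a^2}\cdot\frac{a^2}{2}=-\frac12$.

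The main obstacle is finding the right spherical integration-by-parts identity that absorbs the $\frac12\fint\sum\nu_i^2$ surplus in Lemma~\ref{prop:wvar}. I would first test it in $d=2$, where everything reduces to one-variable integrals in $\theta$, and then generalize.
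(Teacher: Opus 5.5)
Your reduction to $\fint_{\mathbb S^{d-1}}V_{\nu(\omega)}(\lambda)\le|\lambda|^2/4$ is correct, and so is your direct check that $H=\mathrm{diag}(a,-a,0,\ldots,0)$ gives $\mu(H)=-1/2$. But the proposal has a genuine gap: it stops exactly where the proof has to begin. You rightly note that Lemma~\ref{prop:wvar}, applied pointwise with the weights $\nu(\omega)$, leaves the surplus $\frac12|\lambda|^2\fint\sum_i\nu_i^2$. You then postulate an unspecified spherical integration-by-parts identity to absorb it. No such identity is produced, and that step is the whole content of the proposition.

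The slack you would need is also not the quadratic $-\frac12\nu^\top A\nu$. It is the completed square $-\frac12(\nu-A^{-1}\Lambda)^\top A(\nu-A^{-1}\Lambda)$ discarded in \eqref{eq:wvar-csq}. Closing your route would therefore need a quantitative lower bound on the spherical average of that square. Such a bound depends on the full distribution of $\nu(\omega)$ and is tight for the extremal $H$.

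Your equality argument fails for the same reason. Even for $H=\mathrm{diag}(a,-a,0,\ldots,0)$, equality in Lemma~\ref{prop:wvar} at $\nu=\nu(\omega)$ requires $\omega_1^2=\omega_2^2$, which fails for a.e.\ $\omega$. So the pointwise bound is strict almost everywhere precisely when $\mu(H)=-1/2$ is attained. Equality in $\mu$ cannot force pointwise equality in the $\omega$-variables.

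The missing idea is to move the averaging off the sphere, so that the lemma is applied to \emph{deterministic} weights.

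1. Since $\sum_i\nu_i=1$, one has $V_\nu(\lambda)=\sum_{i<j}\nu_i\nu_j(\lambda_i-\lambda_j)^2$, which is linear in the products $\nu_i\nu_j$. So you only need $\mathbb E[\nu_i\nu_j]$ for $\nu_i=\lambda_i^2G_i^2/\sum_k\lambda_k^2G_k^2$, with $G$ a standard Gaussian vector.
2. Write $S^{-2}=\int_0^\infty te^{-tS}\,\mathrm dt$ and use independence. This gives $\mathbb E[\nu_i\nu_j]=\frac14\int_0^\infty t\mathcal L(t)\Lambda_i(t)\Lambda_j(t)\,\mathrm dt$, where $\Lambda_i(t)=\lambda_i^2/(1+\lambda_i^2t)$ and $\mathcal L(t)=\prod_i(1+\lambda_i^2t)^{-1/2}$.
3. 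For each fixed $t$, apply Lemma~\ref{prop:wvar} to $\tilde\nu(t)=\Lambda(t)/M_1(t)$, with $M_k=\sum_i\Lambda_i^k$. The surplus factor is $\frac14M_1^2+\frac12M_2$, which equals $\mathcal L''/\mathcal L$ exactly, because $\mathcal L'=-\frac12M_1\mathcal L$ and $M_1'=-M_2$.
4. Hence $\fint_{\mathbb S^{d-1}}V_{\nu(\omega)}(\lambda)\le\frac14|\lambda|^2\int_0^\infty t\mathcal L''(t)\,\mathrm dt=\frac14|\lambda|^2\mathcal L(0)=\frac14|\lambda|^2$.

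So the $\frac12\sum\nu_i^2$ term is not cancelled by a spherical identity; it is paid for by the second derivative of the Laplace transform. The equality case then works because $\tilde\nu(t)$ is deterministic with all entries positive. Equality forces equality in the lemma for every $t>0$, hence $m=2$ and $\lambda_1=-\lambda_2$. Conversely, $\lambda_1^2=\lambda_2^2$ gives $\tilde\nu(t)\equiv(\frac12,\frac12)$.
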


\begin{proof}
After an orthogonal change of coordinates, write
\begin{equation*}
    H=\operatorname{diag}(\lambda_1,\ldots,\lambda_m,0,\ldots,0),
\end{equation*}
where $m=\operatorname{rank}(H)$ and $\lambda_i\neq0$ for $i=1,\ldots,m$.
Let $\lambda=(\lambda_1,\ldots,\lambda_m)$. Since $H\neq0$ and $\operatorname{tr}H=0$, we have $m\geq2$ and $\sum_{i=1}^m\lambda_i=0$.
Set
\begin{equation*}
    \mathcal I(H)\coloneqq
    \fint_{\mathbb S^{d-1}}
    \frac{P_{H^4}P_{H^2}-P_{H^3}^2}{P_{H^2}^2}.
\end{equation*}
By Lemma~\ref{lem-mu},
$\mu(H)=-2\mathcal I(H)/|\lambda|^2$. Thus, it suffices to show that
$\mathcal I(H)\leq|\lambda|^2/4$.

We first express $\mathcal I(H)$ as a Gaussian integral.
Let $G=(G_1,\ldots,G_d)$ be a standard Gaussian random
vector in $\mathbb R^d$, with density
$(2\pi)^{-d/2}e^{-|g|^2/2}$ for $g\in \mathbb{R}^d$.
The direction $G/|G|$ is uniformly distributed on
$\mathbb S^{d-1}$, and the integrand defining
$\mathcal I(H)$ is homogeneous of degree zero.
Therefore,
\begin{equation}\label{eq:gauss-rep}
    \mathcal I(H)
    =\mathbb E\left[
        \sum_{i=1}^m\nu_i\lambda_i^2
        -\left(\sum_{i=1}^m\nu_i\lambda_i\right)^2
    \right],
\end{equation}
where
\begin{equation*}
    \nu_i\coloneqq
    \frac{\lambda_i^2G_i^2}
         {\sum_{j=1}^m\lambda_j^2G_j^2}.
\end{equation*}
These weights are well defined almost surely and satisfy
$\sum_{i=1}^m\nu_i=1$.
Expanding the square in \eqref{eq:gauss-rep}, we obtain
\begin{equation*}
    \mathcal I(H)
    =\sum_{i<j}(\lambda_i-\lambda_j)^2
    \mathbb E[\nu_i\nu_j].
\end{equation*}

We next compute $\mathbb E[\nu_i\nu_j]$ for $i\neq j$.
Let $\Gamma_i=G_i^2/2$ and
$S=\sum_{k=1}^m\lambda_k^2\Gamma_k$.
The variables $\Gamma_i$ are independent Gamma random variables
with shape $1/2$ and rate $1$.
For $t\geq0$, define
\begin{equation*}
    \Lambda_i(t)\coloneqq\frac{\lambda_i^2}{1+\lambda_i^2t},
    \quad
    \mathcal{L}(t)\coloneqq\prod_{i=1}^m(1+\lambda_i^2t)^{-1/2}.
\end{equation*}
Using $S^{-2}=\int_0^\infty te^{-tS}\,\mathrm{d}t$
and Tonelli's theorem, we have
\begin{equation*}
    \begin{aligned}
        \mathbb E[\nu_i\nu_j]
        &=\lambda_i^2\lambda_j^2
        \mathbb E\left[\frac{\Gamma_i\Gamma_j}{S^2}\right]\\
        &=\lambda_i^2\lambda_j^2
        \int_0^\infty
        t\,\mathbb E[\Gamma_i\Gamma_je^{-tS}]\,\mathrm{d}t.
    \end{aligned}
\end{equation*}
On the other hand, Gaussian integration gives
\begin{equation*}
    \mathbb E[e^{-t\lambda_i^2\Gamma_i}]
    =\frac1{\sqrt{2\pi}}
    \int_{\mathbb R}e^{-(1+\lambda_i^2t)g^2/2}\,\mathrm{d}g
    =(1+\lambda_i^2t)^{-1/2}.
\end{equation*}
Differentiating with respect to $t$, we obtain
\begin{equation*}
    \lambda_i^2\mathbb E[\Gamma_ie^{-t\lambda_i^2\Gamma_i}]
    =\frac12 \Lambda_i(t)(1+\lambda_i^2t)^{-1/2}.
\end{equation*}
Since $\Gamma_1,\ldots,\Gamma_m$ are independent, we have
\begin{equation*}
    \mathbb E[\nu_i\nu_j]
    =\frac14\int_0^\infty
    t\mathcal{L}(t)\Lambda_i(t)\Lambda_j(t)\,\mathrm{d}t,
\end{equation*}
and so we find
\begin{equation}\label{eq:i-int}
    \mathcal I(H)
    =\frac14\int_0^\infty
    t\mathcal{L}(t)\sum_{i<j}
    \Lambda_i(t)\Lambda_j(t)(\lambda_i-\lambda_j)^2\,\mathrm{d}t.
\end{equation}

To estimate the sum in \eqref{eq:i-int}, set
\begin{equation*}
    M_1(t)\coloneqq\sum_{i=1}^m \Lambda_i(t),
    \quad
    M_2(t)\coloneqq\sum_{i=1}^m \Lambda_i(t)^2,
\end{equation*}
and let $\tilde\nu_i(t)=\Lambda_i(t)/M_1(t)$. Then $\tilde \nu_i(t)>0$, $\sum_{i=1}^m\tilde \nu_i(t)=1$, and
\begin{equation}\label{eq-V}
    \sum_{i<j}\Lambda_i(t)\Lambda_j(t)(\lambda_i-\lambda_j)^2
    =M_1(t)^2V_{\tilde \nu(t)}(\lambda).
\end{equation}
Applying Lemma~\ref{prop:wvar} with $\nu=\tilde \nu(t)$, we obtain
\begin{equation}\label{eq:i-sum-ub}
    \sum_{i<j}\Lambda_i(t)\Lambda_j(t)(\lambda_i-\lambda_j)^2
    \leq
    \left(\frac14M_1(t)^2+\frac12M_2(t)\right)|\lambda|^2.
\end{equation}
Since $\log \mathcal{L}(t)=-\frac12\sum_{i=1}^m\log(1+\lambda_i^2t)$,
we have
\begin{equation*}
    \mathcal{L}'(t)=-\frac12M_1(t)\mathcal{L}(t),
    \qquad
    M_1'(t)=-M_2(t).
\end{equation*}
Thus,
\begin{equation}\label{eq:d2}
    \mathcal{L}''(t)=\mathcal{L}(t)\left(\frac14M_1(t)^2+\frac12M_2(t)\right).
\end{equation}
Substituting \eqref{eq:i-sum-ub} and \eqref{eq:d2}
into \eqref{eq:i-int} gives
\begin{equation*}
    \mathcal I(H)
    \leq\frac14|\lambda|^2
    \int_0^\infty t\mathcal{L}''(t)\,\mathrm{d}t.
\end{equation*}

It remains to evaluate this integral. Since each $\lambda_i$ is nonzero,
\begin{equation*}
    \mathcal{L}(t)=O(t^{-m/2}),
    \qquad
    \mathcal{L}'(t)=O(t^{-m/2-1})
    \qquad\text{as }t\to\infty.
\end{equation*}
Integrating by parts on $[0,R]$ and letting $R\to\infty$, we obtain
\begin{equation}\label{eq:d2-int}
    \int_0^\infty t\mathcal{L}''(t)\,\mathrm{d}t
    =\lim_{R\to\infty}\bigl(R\mathcal{L}'(R)-\mathcal{L}(R)+\mathcal{L}(0)\bigr)
    =1.
\end{equation}
Therefore, $\mathcal I(H)\leq|\lambda|^2/4$, which proves $\mu(H)\geq-1/2$.

We next determine when equality holds. Combining \eqref{eq:i-int}, \eqref{eq-V}, \eqref{eq:d2}, and \eqref{eq:d2-int}, we have
\begin{equation}\label{eq:mu-def}
    \frac14|\lambda|^2-\mathcal I(H)
    =
    \frac14\int_0^\infty
    t\mathcal{L}(t)M_1(t)^2
    \left[
        \left(\frac14+\frac12\sum_{i=1}^m\tilde \nu_i(t)^2\right)
        |\lambda|^2-V_{\tilde \nu(t)}(\lambda)
    \right]\,\mathrm{d}t.
\end{equation}
The bracketed term is nonnegative by Lemma~\ref{prop:wvar}.
If $\mu(H)=-1/2$, the left-hand side vanishes. Since the bracketed term is continuous and $t\mathcal{L}(t)M_1(t)^2>0$ for $t>0$, equality holds in Lemma~\ref{prop:wvar} with $\nu=\tilde \nu(t)$ for every $t>0$. All components of $\tilde \nu(t)$ are positive, so the equality condition in Lemma~\ref{prop:wvar} gives $m=2$ and $\lambda_1=-\lambda_2$. Thus, after an orthogonal change of coordinates, $H=\operatorname{diag}(a,-a,0,\ldots,0)$ for some $a\neq0$.

Conversely, suppose that $H$ has this form. Then $\lambda_1^2=\lambda_2^2$, and hence $\tilde \nu_1(t)=\tilde \nu_2(t)=1/2$ for every $t>0$. By the equality condition in Lemma~\ref{prop:wvar}, the bracketed term in \eqref{eq:mu-def} vanishes. It follows that $\mathcal I(H)=|\lambda|^2/4$ and $\mu(H)=-1/2$. This completes the proof.
\end{proof}
\end{appendix}


\end{document}